\theoremstyle{plain} 
\theoremstyle{definition}
\theoremstyle{remark}
\newtheorem{theorem}{Theorem}[section]
\newtheorem{remark}{Remark}[section]
\newtheorem{problem}{Problem}[section]
\newtheorem{procedure}{Procedure}[section]
\begin{document}

\title{ Adjoint for advection schemes on the sphere in ICON model}

\author{Ramaz Botchorishvili} 
\address{Faculty of Exact and Natural Sciences, Ivane Javakhishvili Tbilisi State University, Tbilisi, Georgia} 
\email[Corresponding author]{ramaz.botchorishvili@tsu.ge (Corresponding author)} 

\author{Hendrik Elbern} 
\address{Rhenish Institute for Environmental Research, University of Cologne, Cologne, Germany;
         Institute of Energy and Climate research 8: Troposphere, Research Center J\"ulich, J\"ulich, Germany } 
\email{h.elbern@fz-juelich.de} 

\author{Tamari Janelidze} 
\address{Faculty of Exact and Natural Sciences, Ivane  Javakhishvili Tbilisi State University, Tbilisi, Georgia;
         Institute of Energy and Climate research 8: Troposphere, Research Center J\"ulich, J\"ulich, Germany } 
\email{t.janelidze@fz-juelich.de} 

\subjclass[2010]{65M32, 65M08, 65Z99}
\keywords{adjoint solver, linear advection equation, ICON model, variational data assimilation,  inverse modeling}

\date{\today}

\begin{abstract}
		Among the most advanced and sophisticated methods for state analysis of an atmospheric 
		system is the four dimensional variational data assimilation. The numerically challenging 
		task of this approach is the development and application of the adjoint model components. 
		For tracer transport in fluid dynamics accuracy of numerical advection schemes is vital. 
		It is even more important for applications in space-time variational data assimilation with 
		adjoint model version. We propose novel straightforward and efficient approach - artificial 
		source term method - for adjoint advection solver development. It has several benefits 
		compared to traditional adjoint model building technique. One of the attractive features of 
		the new approach is that it reuses existing advection solver code, thus resulting into 
		significant reduction of time needed for adjoint solver development. The stability, accuracy 
		and convergence of the adjoint schemes are investigated. The method is implemented and evaluated 
		for the linear advection equation on the sphere in the Icosahedral Nonhydrostatic Model (ICON). 
		Adjoint solvers developed by the conventional and the methods are compared against each other on a 
		collection of standard advection test cases and variational data assimilation test cases developed 
		here. The advantages of the artificial source term method is especially obvious in case of monotonic 
		advection equation solvers, granting, e.g. absence of oscillations and nonphysical negative 
		concentrations.
\end{abstract}

\maketitle

\section{Introduction}
	Chemistry transport models (CTM) are widely used for air pollution modelling.  These models 
	are complex and they are usually solved numerically using operator splitting methods when 
	equations describing chemical reactions, horizontal advection and vertical advection  are treated separately 
	at different splitting steps, see e.g. ~\cite{ElbernSchwingerBotchorishvili:Chemical}. 
	Incorporating observations from both in situ and remote sensing devices in CTMs, e.g. satellites etc., is 
	essential for the quality of air pollution forecast. The method used for this purpose is called data 
	assimilation that first appeared in meteorology ~\cite{LeDimetTalagrand1986}. Four dimensional variational 
	data assimilation integrates into the model observational data distributed in space and finite time interval. 
	For achieving this goal a constrained minimization problem is formulated where governing equations serve 
	as constraints and a quadratic cost function measures misfits between observations and model forecast. 
	Solving this minimization problem results in optimal initial data that are consistent with observations and  
	with model dynamics. These data are used for improved forecasting on longer time interval, 
	see  ~\cite{TalagrandCourtier1987},~\cite{ElbernSchmidtEbel1997},~\cite{Wang2001},~\cite{SanduChai2011} 
	for detailed exposition on the subject, see section ~\ref{sec:variational} for the formulation of variational 
	problem for the linear advection equation. 

	A central module in CTMs as part of a 4D-var system is  the advcetion algorithm and its ajoint.  \cite{vukicevic2001properties}  
	studied the influence of linear and non--linear numerical advection algorithm properties on variational data assimilation 
	results in a 2D idealized scalar advection framework. Results suggested that  exact the same scalar advection algorithm in 
	forward and adjoint computations obtains, at lower cost, an optimal solution accuracy that is consistent with the forward 
	model accuracy.

	\cite{thuburn2001adjoints} investigated the impact of switches in non-oscillatory advection schemes for their adjoints.
	They  showed that there is no possibility of smoothing the switches in nonoscillatory advection schemes to remove
        the discontinuities while retaining an obvious and desirable scaling property. On the basis of established 
        equivalence between Eulerian backtracking or retro-transport and adjoint transport with respect to an 
        air-mass-weighted scalar product, \cite{hourdin2006eulerian} studied the question which arises as to whether it is 
        preferable to use the exact numerical adjoint, or the retro-transport model for a model that is not time-symmetric. 
        They concluded that the presence of slope limiters in the Van Leer advection scheme can produce in sonic circumstances 
        unrealistic, even negative adjoint sensitivities. The retro-transport equation, on the other hand, generally produces 
        robust and realistic results.

        In the contex of the GEOS-Chem \cite{henze2007development} tested the accuracy of the adjoint model by comparing adjoint to  
        finite difference sensitivities, which are shown to agree within acceptable tolerances. They explore the robustness 
        of these results, noting how discontinuities in the advection routine hinder, but do not entirely preclude, the use of such
        comparisons for validation of the adjoint model.  

        A  comprehensive study on the consistency of the discrete adjoints of upwind numerical schemes was provided by 
        \cite{liu2008properties}. Both linear  and nonlinear  discretizations of the one-dimensional advection equation are considered, 
        representing finite differences or finite volumes and slope or flux-limited techniques, respectively. \cite{gou2011continuous}  
        studied the effect of using discrete and continuous adjoints of the advection equation in chemical transport modeling numerically. 
        They concluded that discrete advection adjoints are more accurate in point-to-point comparisons against finite differences, 
        whereas the continuous adjoints of advection perform better as gradients for optimization in 4D-Var data assimilation. 

        Considering  a variational implementation of the RETRO-TOM model, \cite{haines2014adjoint} demonstrated a time symmetric forward 
        advection scheme with second-order moments to be efficiently exploited in the backward adjoint calculations, at least for 
        problems in which flux limiters in the advection scheme are not required. For this case the authors found the flexibility and 
        stability of a 'finite difference of adjoint' formulation with the accuracy of an 'adjoint of finite difference' formulation.

        \cite{holdaway2015assessing} examined the tangent linear and adjoint versions of NASA's Goddard Earth Observing System version 5 
        (GEOS-5). Tests exhibited that piecewise parabolic methods with flux limiters development of unrealistically large perturbations 
        within the tangent linear and adjoint models, and that using a linear third-order scheme for the linearised model produces better 
        behaviour.
		
	Iterative methods are used for minimizing the quadratic cost function. Quasi-Newton limited memory methods are 
	among most popular minimization algorithms  ~\cite{LiuNocedal1989}. A numerically challenging task in variational 
	data assimilation is the development of the adjoint model for the gradient computation that is used by quasi 
	Newton algorithm.  For the linear advection equation, as demonstrated in ~\cite{thuburn2001nonoscillatory}, 
	the results depend on the implementation of adjoint advection schemes especially in case of nonlinear schemes. 
	Several different approaches can be used for adjoint solver development, see e.g. ~\cite{LeDimetTalagrand1986}, 
	\cite{nodet2016variational} for  derivation of adjoint equations in continuous setting for which numerical solver 
	has to be developed, see e.g. ~\cite{thuburn2001adjoints}(სანახავია) for the development of adjoint in discrete 
	setting - i.e. adjoint of a given numerical advection scheme,  and see ~\cite{hascoet2013tapenade} for the 
	development of adjoint code based on powerful automated differentiation tool Tapenade.   
	
	Here we propose simple and powerful approach for the adjoint development for linear advection equation in 
	conservative form. Our approach is based on introducing artificial source term that ensures consistency of 
	the adjoint numerical scheme with the adjoint of the linear advection equation under consideration. 
	Implementation of our approach is easy and time efficient: it essentially reuses given linear advection solvers 
	with minor modifications, maintains accuracy and stability of the given parent solver and it uses the same 
	parallelization routines. The proposed method is general, though here we consider advection schemes on triangular 
	mesh of the sphere in the ICON model. See subsection \ref{sec:ICONmesh} for short description 
	and ~\cite{wan2013icon} for details, as part of broader effort on development of a next generation 4D-Var data 
	assimilation system with CTM.  
	
	The paper is organized as follows: in section \ref{sec:variational} the linear advection equation on the sphere 
	is given, the variational problem on data assimilation is formulated, and the adjoint and cost functions are given; 
	in section \ref{sec:NumSchemes} a new method for building adjoint scheme is proposed that is applied for building 
	the adjoint solver using linear advection schemes in the ICON model, the properties  of the developed adjoint schemes 
	are investigated theoretically; in section \ref{sec:NumTests} the new method is studied numerically on test problems 
	in the context of data assimilation for tracer transport models.

\section{Variational problem formulation for linear advection}
\label{sec:variational}
\subsection{Linear advection equation on the sphere}
The linear advection equation in conservative form writes:
\begin{equation}\label{eq:LinearAdvection}
\frac{\partial(\rho q)}{\partial t}+\nabla\cdot(\rho q\vec{v})=0,
\end{equation}
where  $\rho$ and $q$ are fluid density and mixing ratio, respectively. We are interested in their advection on the surface of the sphere $\Omega$. 
Therefore $\vec{v}=(v_\lambda,v_\theta)^T$ is the 2D horizontal wind vector and $\nabla\cdot$ is a spherical horizontal divergence operator given by 
\begin{equation}\label{eq:NablaHorizontal}
	\nabla\cdot v=\frac{1}{\sin\theta}\big[\frac{\partial v_\lambda}{\partial \lambda}+\frac{\partial (v_\theta\sin\theta)}{\partial\theta}\big],
\end{equation}  
with  $0\leq\lambda\leq 2\pi$ longitude and $0\leq\theta\leq \pi$ latitude. For simplicity, we take the radius $R$ of the sphere to be one. 

Throughout this paper in (\ref{eq:LinearAdvection})  $\rho=\rho(t,\lambda, \theta)>0$ and $v=v(t,\lambda, \theta)$ are sufficiently smooth given functions, 
$q=q(t,\lambda, \theta)$ is the unknown function that should be determined from the equation (\ref{eq:LinearAdvection}) subject to the initial 
condition that writes:
\begin{equation}\label{eq:InitCondition}
q(0,\lambda,\theta)=q_0(\lambda,\theta),
\end{equation}
where $q_0(\lambda,\theta)$ is a sufficiently smooth given function. 

Notice that initial condition (\ref{eq:InitCondition}) is given on the surface of the sphere. Because of this initial condition is sufficient and no 
boundary conditions are needed, the problem (\ref{eq:LinearAdvection})-(\ref{eq:InitCondition}) is well posed for any finite time interval $[0,T]$.

\subsection{Variational problem, adjoint equation, gradient of cost function}

Consider the following cost function:
\begin{equation}\label{eq:CostFunc}
J(q_0)=f^b(q_0,q^b)+\int_{0}^{T}\int_{\Omega}f^o(q,q^o,t,\lambda,\theta)d\lambda d\theta dt,
\end{equation}
where   $q_0$ is initial value function from the initial condition (\ref{eq:InitCondition}); $q^b=q^b(\lambda, \theta)$ is given, it is the so called 
background function that is also used as first guess for iteration method minimizing $J(q_0)$ with respect to $q_0$; in $q^o$ the superscript "o" stands 
for observations and is a given function; the functions $f^b$ and $f^o$ will be detailed in next subsection. Function $f^b$ measures the misfit between initial and 
background functions, while $f^o$ measures misfit between mixing ratio and observations. Here we assume $f^b$ and $f^o$ are smooth enough, we also assume that 
gradients of $f^b$ and $f^o$ with respect to $q_0$, denoted here via $\nabla_{q_0} f^b$ and $\nabla_{q_0} f^o$ respectively, can be computed analytically. 

Now we can state the variational data assimilation problem in the following way:

\begin{problem}[Variational data assimilation]
Find a initial function $q_0$ minimizing cost function (\ref{eq:CostFunc}) subject to constraints (\ref{eq:LinearAdvection}),(\ref{eq:InitCondition}). 
\end{problem}

The gradient of cost function $J(q_0)$ writes:
\begin{equation}\label{eq:GradCostFunc}
\nabla_{q_0} J(q_0)=\nabla_{q_0}f^b(q_0,q^b) - \rho(0,\lambda, \theta)q^*(0,\lambda, \theta),
\end{equation}
where $q^*(0,\lambda, \theta)$ is solution at $t=0$ of the following adjoint equation
\begin{equation}\label{eq:AdjointLinearAdvection}
\rho (\frac{\partial q^*}{\partial t}+v\cdot \nabla q^*)=\nabla_{q}f^o(q,q^o,t,\lambda,\theta),
\end{equation}
which is subject to the initial condition at $t=T$, in particular, 
\begin{equation}\label{eq:AdjointInitCondition}
q^*(T,\lambda,\theta)=0.
\end{equation}

Notice that in the adjoint equation (\ref{eq:AdjointLinearAdvection}) on the right hand side the function $q$ is the solution of the conservative linear advection 
equation (\ref{eq:LinearAdvection}) with initial condition (\ref{eq:InitCondition}). Notice also that the problem  (\ref{eq:LinearAdvection}),(\ref{eq:InitCondition}) 
is integrated forward in time and the problem (\ref{eq:AdjointLinearAdvection}),(\ref{eq:AdjointInitCondition}) is integrated backward in time. 
Because of this reason the problem  (\ref{eq:LinearAdvection}),(\ref{eq:InitCondition}) is often referred as forward problem and the problem (\ref{eq:AdjointLinearAdvection}),
(\ref{eq:AdjointInitCondition}) is referred as adjoint problem. The same applies to the numerical solvers, i.e. forward solver and adjoint solver. Using these terms we 
can easily formulate procedure for the gradient computation. In particular, we have:

\begin{procedure}[Computing the gradient of the cost function] \hspace{2cm}
\begin{enumerate}
\item Solve the forward problem (\ref{eq:LinearAdvection}),(\ref{eq:InitCondition}) and store $q(t,\lambda,\theta)$.  \\
\item Put $q(t,\lambda,\theta)$ on the right hand side of the equation (\ref{eq:AdjointLinearAdvection}) and solve the adjoint problem (\ref{eq:AdjointLinearAdvection}),
      (\ref{eq:AdjointInitCondition}), store  $q^*(0,\lambda,\theta)$.\\
\item Insert $q^*(0,\lambda,\theta)$ in (\ref{eq:GradCostFunc}) and compute the gradient of the cost function $J(q_0)$ defined by (\ref{eq:CostFunc}). \\
\end{enumerate}
\end{procedure} 

\subsection{Cost function}

Cost functions of variational data assimilation problem are mainly given in discrete or semi-discrete form in scientific literature, 
see e.g.~\cite{ElbernSchwingerBotchorishvili:Chemical, Wang2001, SanduChai2011}  for fully discrete cost functions and 
see  ~\cite{ElbernSchmidtEbel1997, elbern1999four, Wang2001} for semi-discrete cost functions.  Under semi-discrete version 
we mean cost function where continuous time integration is used. Here we establish a relationship between discrete, 
semi-discrete and continuous cost functions that is needed later for studying convergence of developed numerical schemes. 
For this purpose we have to make a discretization in space and time of dependent and independent variables. 
Since finite volume schemes are of interest of the present paper, we denote by $\Omega_j$ finite volume cell $j$, 
$ \Omega_j \subset \Omega$, $\cup_{j=1}^{N_c} \Omega_j= \Omega$, where $|\Omega_j|$ is area of the cell $\Omega_j$, 
and $N_c$ is number of cells. The average value of function q on $\Omega_j$ at time $t$ is denoted as $q_j(t)$. 
The same value at time $t=t_n$ is denoted as $q_j^n$, $t_n=n\Delta t, n=0,1,..,N_T$, where $\Delta t= T/N_T$ is 
time discretization step, and $N_T$ is number of nodal points in time. In the data assimilation context all discretized function 
values at time $t$ are interpreted as vectors, e.g. $\vec{q}(t)=(q_1(t),q_2(t),..,q_N(t))^T$, $\vec{q}^n=(q_1^n,q_2^n,..,q_{N_c}^n)^T$, 
since such notations are convenient for formulating cost function. Adopting our notations the semidiscrete cost function 
from ~\cite{elbern1999four} writes:
\begin{equation}\label{eq:CostFuncSemidiscrete}
J(\vec{q}_0))=\frac{1}{2}(\vec{q~}^b-\vec{q}_0)^TB^{-1}(\vec{q~}^b-\vec{q}_0)+ \frac{1}{2}\int_{t_0}^{t_{N_T}}(\vec{q~}^o(t)-\textit{H}[\vec{q}(t)])^TR^{-1}(\vec{q~}^o(t)-\textit{H}[\vec{q}(t)])dt,
\end{equation}
where $B$ and $R$ matrices are the background and observation error covariance matrices, respectively, 
$\vec{q~}^o(t)=(q^o_{i_1}(t),q^o_{i_2}(t),..,q^o_{i_{N_o}}(t))^T$, $1 \leq i_k \leq N_c, k=1,2,..,N_o$,  $N_o$ 
is number of observations, $N_o < N_c$. Operator $\textit{H}$ projects the model state to observation space that practically 
means the following: $\textit{H}[\vec{q}(t)]=(q_{i_1}(t),q_{i_2}(t),..,q_{i_{N_o}}(t))^T$.  
We denote by $\mathcal{O}$ the set of indices corresponding to observations, $\mathcal{O}=\{ i_1,i_2,..,i_{N_o} \}$.

The fully discrete version of the cost function from ~\cite{ElbernSchwingerBotchorishvili:Chemical} writes: 
\begin{equation}\label{eq:CostFuncDiscrete}
J(\vec{q}_0))=\frac{1}{2}[\vec{q~}^b-\vec{q}_0]^TB^{-1}[\vec{q~}^b-\vec{q}_0]+
\frac{1}{2} \sum_{n=0}^{N_T}[\vec{q~}^{on}-\textit{H}(\vec{q~}^n)]^TR^{-1}[\vec{q~}^{on}-\textit{H}(\vec{q~}^n)].
\end{equation}

If we assume that covariance matrices $B$ and $R$ are the same in (\ref{eq:CostFuncSemidiscrete}) and (\ref{eq:CostFuncDiscrete}) then 
for the compatibility of semi-discrete and discrete cost functions the following modification is necessary:
\begin{equation}\label{eq:CostFuncDiscreteM}
	J(\vec{q}_0))=\frac{1}{2}[\vec{q~}^b-\vec{q}_0]^TB^{-1}[\vec{q~}^b-\vec{q}_0]+ 
	\frac{T}{2N_T} \sum_{n=0}^{N_T}[\vec{q~}^{on}-\textit{H}(\vec{q~}^n)]^TR^{-1}[\vec{q~}^{on}-\textit{H}(\vec{q~}^n)],
\end{equation}

Notice that (\ref{eq:CostFuncSemidiscrete}) and (\ref{eq:CostFuncDiscreteM}) are compatible in the sense that 
when $N_T \longrightarrow \infty$ limit of discrete cost function (\ref{eq:CostFuncDiscreteM}) coincides with 
semidiscrete cost function (\ref{eq:CostFuncSemidiscrete}). 

Studying covariance matrices is out of scope of this paper. Therefore, without loss of generality and for the convenience 
of further exposition, we replace inverse covariance matrices by some positive definite kernels 
denoted by $K_b(\xi,\xi')$ and $K_o(\xi,\xi')$, $\xi, \xi' \in \Omega$, $\xi=(\lambda, \theta)$, $\xi'=(\lambda', \theta')$, and we 
define the function $f^b$ from (\ref{eq:CostFunc}) in the following way:
\begin{equation}\label{eq:CostFunc:f^b}
	f^b(q_0,q^b)=\frac{1}{2|\Omega|^2} \int_{\Omega} \int_{\Omega} K_b(\xi,\xi')[q_0(\xi)-q^b(\xi)] [q_0(\xi')-q^b(\xi')] d\xi d\xi'.
\end{equation} 

After standard discretization of (\ref{eq:CostFunc:f^b}) we have:
\begin{equation}\label{eq:CostFunc:J^b}
J^b =  \sum_{i=1}^{N_c} \sum_{j=1}^{N_c} \frac{|\Omega_i||\Omega_j|}{2|\Omega|^2}   K_b(\xi_i,\xi'_j)[q_{0j}-q^b_i] [q_{0j}-q^b_j]= \frac{1}{2}[\vec{q~}^b-\vec{q}_0]^TK^b[\vec{q~}^b-\vec{q}_0],
\end{equation} 
where $K^b$ is a symmetric positive definite matrix with elements 
\begin{equation*}
K^b_{ij}=\frac{|\Omega_i||\Omega_j|}{2|\Omega|^2}   K_b(\xi_i,\xi'_j), ~i,j=1,2,..,N_c.
\end{equation*}

Thus consistency is ensured between discrete and continuous versions of background terms (\ref{eq:CostFunc:f^b}) and 
(\ref{eq:CostFunc:J^b}), i.e. $\lim_{N_c \rightarrow \infty} J^b = f^b$. 
The formulation will allow us to study the impact of mesh refinement on data assimilation in section ~\ref{sec:NumTests}. 

By analogy with (\ref{eq:CostFunc:J^b}) we define $J^o$ and its continuous version: 
\begin{multline}\label{eq:CostFunc:J^o}
J^o =  \frac{T}{2N_T} \sum_{n=0}^{N_T} \sum_{i \in \mathcal{O}} \sum_{j \in \mathcal{O}} \frac{|\Omega_i||\Omega_j|}{2|\Omega^o|^2}   K_o(\xi_i,\xi'_j)[q_i^{on}-q^n_i] [q_j^{on}-q^n_j]= 
\frac{T}{2N_T} \sum_{n=0}^{N_T}[\vec{q~}^{on}-\textit{H}(\vec{q~}^n)]^TK^o[\vec{q~}^{on}-\textit{H}(\vec{q~}^n)],
\end{multline}

\begin{equation}\label{eq:f^o}
f^o(q,q^o,t, \xi)= 
\begin{cases}
\frac{[q(t,\xi)-q^o(t,\xi)]}{2|\Omega^o|^2}  \int_{\Omega^o} K_o(\xi,\xi') [q(t,\xi')-q^o(t,\xi')] d\xi', & ~\xi \in \Omega^o,\\
0,  & \text{otherwise},
\end{cases}
\end{equation} 
where 
\begin{equation*}
K^o_{ij} = \frac{|\Omega_i||\Omega_j|}{2|\Omega^o|^2}   K_o(\xi_i,\xi'_j), ~{i,j \in \mathcal{O}},~\Omega^o=\cup_{i \in \mathcal{O}} \Omega_i.
\end{equation*} 
Consequently we have 
\begin{equation}\label{eq:Nabla_qf^o}
\nabla_q f^o(q,q^o,t, \xi)= 
\begin{cases}
\frac{1}{|\Omega^o|^2}  \int_{\Omega^o} K_o(\xi,\xi') [q(t,\xi')-q^o(t,\xi')] d\xi', & ~\xi \in \Omega^o,\\
0,  & \text{otherwise}.
\end{cases}
\end{equation} 

Notice that in (\ref{eq:f^o}) we assume that observations are given on a subdomain $\Omega^o$, $|\Omega^o| >0.$ 

\section{Numerical schemes}
\label{sec:NumSchemes}
\subsection{ICON grids}
\label{sec:ICONmesh}
The name "ICON"  stands for the joint project of the Max Plank Institute for Meteorology (MPI-M) and the German Weather Service (DWD) on development 
of \textbf{ICO}sahedral \textbf{N}onhydrostatic models ~\cite{wan2013icon}. In the earlier version of ICON model triangular and hexagonal discretizations 
were tested and later only the triangular version of the icosahedral mesh is used~\cite{wan2013icon}. In this paper we use the triangular version of icosahedral 
grids from "Published list of DWD (EDZW) ICON grids"  by MPI-M and DWD given at http://icon-downloads.zmaw.de/. Meshes are referred as "$Rn_rBn_b$ grids" and 
from this name one can recover the algorithm how the grid was constructed. In partciular, starting point in any "$Rn_rBn_b$ grid" is an icosahedron with vertices 
on the sphere that is entirely projected onto the sphere, then edges along a great arc are devided in $n_r$ equal parts and then each edge of the obtained triangles 
are recursively devided $n_b$ times. See ~\cite{Sadourny1968integration, wan2013icon} for detailed description of the algorithm, for different optimization 
approaches and for grid characteristics. Some important characteristics of these meshes are given in the  Table 1 in ~\cite{wan2013icon} that shows non uniformity 
of $Rn_rBn_b$ grids in terms of triangle area ratios that can reach values as high as $1.53$ for $R2B7$ grid.  The same table also shows that the ratio is increasing 
together with mesh refinement. Here we also give some additional characteristics of $Rn_rBn_b$ grids in the  ~\cref{tab:R2BnGrids}. 
When grids are refined  the edge ratio is also  increasing and its maximum can reach $1.34$ for $R2B7$ grid. Though inside each triangle the edge ratio is 
almost constant and it increases very slowly, for example in particular, from $1.1734$ for $R2B2$ grid to $1.1761$ for $R2B7$ grid. Close to 1 edge ratio also 
means that triangular mesh changes smoothly on the surface of the sphere. $R2B7$ is the finest grid we consider in this paper since. 
Notice that our cost function accounts for the variability of grid.  

\begin{table}[h]
	\centering
	\caption{ICON grids, $R2Bn_b$ }	
	\begin{adjustbox}{width=1\textwidth}		
		\begin{tabular}{r r r r r r r r}
			\hline
			Grid & Number of  & Number of & Min triangle             & Max:min       & Max:min        & Min edge \\
			     & triangular & triangle  & cell area, $\text{km}^2$ & edge length   & edge length    & length, km \\
			     & cells      & edges     &                          & ratio, global & ratio, triangle & \\
			\hline
							
				R2B0  & 80      & 120     & 6010381.55 & 1.14 & 1.1350 & 3526.95 \\
				R2B1  & 320     & 480     & 1440873.32 & 1.17 & 1.1660 & 1737.06 \\
				R2B2  & 1280    & 1920    & 333434.84  & 1.21 & 1.1734 & 836.96  \\
				R2B3  & 5120    & 7680    & 78835.01   & 1.24 & 1.1750 & 407.12  \\
				R2B4  & 20480   & 30720   & 18777.28   & 1.27 & 1.1754 & 198.71  \\
				R2B5  & 81920   & 122880  & 4507.50    & 1.30 & 1.1755 & 97.36   \\
				R2B6  & 327680  & 491520  & 1089.56    & 1.32 & 1.1756 & 47.87   \\
				R2B7  & 1310720 & 1966080 & 265.08     & 1.34 & 1.1761 & 23.61   \\
				\hline
				
		\end{tabular}
	\end{adjustbox}	
        \label{tab:R2BnGrids}
\end{table}

\subsection{Linear advection schemes in ICON model}
\label{sec:ICONschemes}
Tracer transport schemes in the ICON triangular version are based on ~\cite{miura2007upwind} for finite volume discretization 
and on ~\cite{ollivier2002high} for high order reconstruction, both methods adapted to triangular grids on a sphere. The scheme is 
referred as ICON-FFSL (ICON-Flux Form Semi-Lagrangean) in ~\cite{lauritzen2014geoscientific} for its second order version. 
In this paper we will also use the same abbreviation for ICON tracer transport schemes. These schemes can be used with or without limiters. 
Here we build adjoint schemes for both cases. Adjoint without limiter is given in the section \ref{sec:Adjoint:IntegrByParts} and the adjoint 
with limiter is introduced in the section \ref{sec:Adjoint:ArtSource}.  Limiters in ICON-FFSL 
schemes are based on  Zalesak's \textbf{F}lux \textbf{C}orrected \textbf{T}ransport (FCT) by ~\cite{zalesak1979fully} the limiter and its 
positive definite modifications by  ~\cite{schar1996synchronous} and ~\cite{harris2011flux}.  Details including numerical results on 
ICON tracer transport schemes  are given in  http://www.cgd.ucar.edu/cms/pel/transport-workshop/2011/16-Reinert.pdf, 
~\cite{wan2013icon} and ~\cite{lauritzen2014geoscientific}. For the convenience of further exposition on constructing adjoint schemes the above 
finite volume advection schemes ICON-FFSL can be written in the following flux form: 
\begin{equation} \label{fv:FFSL4LA}
	\frac{\rho_j^{n+1} q_j^{n+1}-\rho_j^n q_j^n}{\Delta t}+\frac{1}{| \Omega_j |} \sum_{i\in I_j} F_{ji}(\{\bar{\rho}_k^n, \bar{v}_k^n,q_k^n\}_{k\in K_{ji}},\vec{n}_{ji}, l_{ji})=0,
\end{equation} 
where $I_j$ is a set of reference numbers to cell interfaces surrounding cell $\Omega_j$, i.e. $I_j$ consists of three elements in  
case of triangular cell.  $F_{ji}$ is a numerical flux function on the cell interface between cells $\Omega_j$ and $\Omega_i$, 
set $K_{ji}$ contains reference numbers to cells that are used for computing the numerical flux function $F_{ji}$, $\vec{n}_{ji}$ is a unit 
outward normal of the cell interface, $l_{ji}$ is length of the edge shared by triangles $j$ and $i$, $\bar{\rho}_k^n$ and $\bar{v}_k^n$ 
stands for time average of $\rho_k$ and $\vec{v}_k$ on $[]t_n,t_{n+1}]$. Notice that $K_{ji}=K_{ij}$, $l_{ji}=l_{ij}$, $\vec{n}_{ji}=-\vec{n}_{ij}$, $F_{ji}=-F_{ij}$. 
The latter ensures mass conservation of the scheme (\ref{fv:FFSL4LA}). 

\subsection{Integration by parts method for adjoint schemes}
\label{sec:Adjoint:IntegrByParts}
Here we consider adjoint scheme for ICON-FFSL with numerical flux function without limiters. In this case ICON-FFSL scheme (\ref{fv:FFSL4LA}) can 
be equivalently written in the following form:

\begin{equation} \label{fv:FFSL:NonFluxForm}
\frac{\rho_j^{n+1} q_j^{n+1}-\rho_j^n q_j^n}{\Delta t} +
\frac{1}{| \Omega_j |} \sum_{i\in S_j} \alpha_{ji}(\{\bar{\rho}_k^n, \bar{v}_k^n\}_{k \in K_{ji}})q_{i}^n
=0, ~~S_j=\cup_{i\in I_j} K_{ji},
\end{equation}
where $S_j$ contains a reference number to cells on the stencil of the scheme, see ~\cref{fig:stencil}. The scheme (\ref{fv:FFSL:NonFluxForm}) 
is linear with respect to $\vec{q~}^n$ and it is the starting point for building the adjoint, i.e. for deriving the scheme that is consistent 
with equation (\ref{eq:AdjointLinearAdvection}). In the latter equation the approximation of the time derivative and the right hand side 
is straightforward. The spatial derivatives can be approximated, e.g. by means of rewriting second term in (\ref{fv:FFSL:NonFluxForm}) in matrix 
form and then using transpose matrix. Equivalent but easy to use approach is using discrete analogue of integration 
by parts formula, i.e. discrete analogue of the approach which was used for the derivation of (\ref{eq:AdjointLinearAdvection}). 
For the convenience of further exposition we set: $\alpha_{ji}=0, ~\text{if}~ i \notin S_j$. Thus we have

\begin{multline} \label{fv:SumByParts:NonFluxForm}
\sum_{j} | \Omega_j | q^{*,n}_j \frac{1}{| \Omega_j |} \sum_{i\in S_j} \alpha_{ji}(\{\bar{\rho}_k^n, \bar{v}_k^n\}_{k \in K_{ji}})q_{i}^n
= 
\sum_{j}  \sum_{i} \alpha_{ji}(\{\bar{\rho}_k^n, \bar{v}_k^n\}_{k \in K_{ji}})q_{i}^nq^{*,n}_j 
= \\
 \sum_{i}  \sum_{j} \alpha_{ji}(\{\bar{\rho}_k^n, \bar{v}_k^n\}_{k \in K_{ji}})q_{i}^nq^{*,n}_j =
\sum_{i} | \Omega_i | q^{n}_i \frac{1}{| \Omega_i |} \sum_{j\in S^*_i} \alpha_{ji}(\{\bar{\rho}_k^n, \bar{v}_k^n\}_{k \in K_{ji}})q^{*,n}_{i},
\end{multline}
where $ S^*_i$ is set of indexes for which $\alpha_{ji} \neq 0$. On account of (\ref{fv:SumByParts:NonFluxForm}), (\ref{fv:FFSL:NonFluxForm}) 
and (\ref{eq:AdjointLinearAdvection}) adjoint numerical scheme writes:
\begin{equation} \label{fv:Adjoint:NonFluxForm}
\bar{\rho}_j^{n}\frac{ q^{*,n+1}_j- q^{*,n}_j}{\Delta t} +
\frac{1}{| \Omega_j |} \sum_{i\in S^*_j} \alpha_{ij}(\{\bar{\rho}_k^n, \bar{v}_k^n\}_{k \in K_{ij}})q^{*,n}_i
=\nabla_{q}f^o(q^{*,n}_j,q^{o,n}_j,t_n,\xi_j). 
\end{equation}
Coefficients $\alpha_{ji}$ are given in the appendix \ref{A:Coef4Adjoint}.

\subsection{Artificial source term method for adjoint schemes}
\label{sec:Adjoint:ArtSource}
In this section we introduce a simple and efficient new method for developing the adjoint scheme and we apply it for building the 
adjoint based on ICON-FFSL. The method can be used when the numerical flux function is  limited with or without flux limiters. 
Our starting point is ICON-FFSL in flux form (\ref{fv:FFSL4LA}). Assuming the scheme is consistent with equation (\ref{eq:LinearAdvection}) 
the goal is constructing a numerical scheme that is consistent with the equation (\ref{eq:AdjointLinearAdvection}). The latter can be equivalently written 
\begin{equation}\label{eq:LA+ArtSource}
\rho \frac{\partial q^*}{\partial t}+
\nabla\cdot(\rho q^*\vec{v})=
q^*\nabla\cdot(\rho \vec{v}) +
\nabla_{q}f^o(q,q^o,t,\xi), 
\end{equation}
where $\nabla_{q}f^o(q,q^o,t,\xi)$ is defined by (\ref{eq:Nabla_qf^o}). 
We will use this equivalent formulation of the adjoint equation as starting point of our method. The equation in the 
form  (\ref{eq:LA+ArtSource}) is a good choice because we can effortlessly reuse discretization schemes developed 
for the linear advection equation in conservative form (\ref{eq:LinearAdvection}). In particular, the spatial discretization 
of ICON-FFSL (\ref{fv:FFSL4LA}) can be reused for the second and first term respectively, both in the left hand side and in the 
right hand side of the equation (\ref{eq:LA+ArtSource}). As a result we have the following numerical scheme for the gradient computation
\begin{multline} \label{fv:adjointFFSL4LA}
\bar{\rho}_j^n \frac{q^{*,n+1}_j- q^{*,n}_j}{\Delta t}+\frac{1}{| \Omega_j |} \sum_{i\in I_j} F_{ji}(\{\bar{\rho}_j^n, \bar{v}_k^n,q^{*,n}_k\}_{k\in K_{ji}},\vec{n}_{ji}, l_{ji})= \\
\frac{1}{| \Omega_j |} \sum_{i\in I_j} F_{ji}(\{\bar{\rho}_j^n, \bar{v}_k^n,q^{*,n}_j\}_{k\in K_{ji}},\vec{n}_{ji}, l_{ji}) + \nabla_{q}f^o(q^{n}_j,q^{o,n}_j,t_n,\xi_j).
\end{multline} 
Notice that the first sum in the numerical scheme (\ref{fv:adjointFFSL4LA}) is exactly the same as the sum in (\ref{fv:FFSL4LA}). The 
second sum in (\ref{fv:adjointFFSL4LA}) is in principle the same with $q^{*,n}_j$ instead of $q^{*,n}_k$ that makes the computation of 
the sum even easier. Therefore the particularly interesting feature of the numerical scheme (\ref{fv:adjointFFSL4LA}) is the 
following: it makes easy reusing of the source code of the parent scheme, in our case ICON-FFSL scheme, and that dramatically reduces 
time needed for adjoint code development, e.g. it took just a couple of days in our case.  

\subsection{Properties of the adjoint scheme with artificial source term}
 
It is expected that the adjoint scheme constructed by artificial source term method will have good properties in the case if the parent 
scheme has the same good properties as well. According to Lax's equivalence theorem the minimal set of properties of the schemes ensuring 
convergence are consistency and stability. Therefore we will assume that parent scheme has these properties and then we prove that its 
descendant adjoint scheme also enjoys the same properties. 

\begin{theorem}[Consistency theorem]\label{Th:Consistency}
	Suppose $q,\rho, \vec{v}, q^*$ are sufficiently smooth and numerical scheme (\ref{fv:FFSL4LA}) is consistent with the linear 
	advection equation (\ref{eq:LinearAdvection}) in the sense of local truncation error. Then adjoint scheme 
	with artificial source term (\ref{fv:adjointFFSL4LA}) is also consistent with adjoint equation (\ref{eq:LA+ArtSource}).
\end{theorem}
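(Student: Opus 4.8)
The plan is to exploit the fact that the adjoint scheme (\ref{fv:adjointFFSL4LA}) is assembled out of exactly the same spatial operator that appears in the parent scheme (\ref{fv:FFSL4LA}), so that consistency of the parent transfers termwise to consistency of the adjoint. Concretely, write the parent's discrete flux divergence operator as
\[
\mathcal{L}^n_j[\phi] := \frac{1}{|\Omega_j|}\sum_{i\in I_j} F_{ji}\big(\{\bar\rho_k^n,\bar v_k^n,\phi_k\}_{k\in K_{ji}},\vec n_{ji},l_{ji}\big),
\]
acting on any grid function $\phi=(\phi_k)$. By the hypothesis that (\ref{fv:FFSL4LA}) is consistent with (\ref{eq:LinearAdvection}) in the sense of local truncation error, inserting the exact smooth solution values gives, for any smooth $\psi$ with $\psi_k = \psi(\xi_k)$,
\[
\mathcal{L}^n_j[\psi] = \nabla\cdot\big(\rho^n \psi\, \vec v^n\big)(\xi_j) + O(\Delta x^{p}) + O(\Delta t^{\,r}),
\]
i.e. $\mathcal{L}^n_j$ is a consistent approximation of the continuous operator $\psi\mapsto \nabla\cdot(\rho\psi\vec v)$ evaluated at time $t_n$ (this is just the statement of parent consistency applied with $\rho q$ replaced by the generic advected field $\rho\psi$, using linearity of the flux form in its $q$-slot). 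First I would make this reading precise, being careful about the time-averaging: $\bar\rho_k^n,\bar v_k^n$ approximate $\rho,\vec v$ on $[t_n,t_{n+1}]$ to the order the parent scheme requires, so replacing them by $\rho(t_n,\cdot),\vec v(t_n,\cdot)$ only perturbs things at the consistency order.

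Next I would substitute the smooth exact adjoint solution $q^*$ into (\ref{fv:adjointFFSL4LA}) and expand each of the four groups of terms with Taylor expansions in $\Delta t$ and $\Delta x$. The time-difference term $\bar\rho_j^n\,(q^{*,n+1}_j-q^{*,n}_j)/\Delta t$ gives $\rho\,\partial_t q^*(\xi_j,t_n) + O(\Delta t)$ (plus the $O(\Delta t)$-or-better error from $\bar\rho_j^n$ versus $\rho(t_n,\xi_j)$). The first flux sum is $\mathcal{L}^n_j[q^{*,n}] = \nabla\cdot(\rho q^* \vec v)(\xi_j,t_n) + O(\Delta x^{p})+O(\Delta t^{r})$ by the previous paragraph. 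The second flux sum on the right is $\mathcal{L}^n_j$ applied to the \emph{constant} field $\phi_k \equiv q^{*,n}_j$; consistency of the parent operator applied to the constant function $\psi\equiv 1$ (scaled by $q^{*,n}_j$) yields $q^{*,n}_j\,\nabla\cdot(\rho\vec v)(\xi_j,t_n) + O(\Delta x^{p})+O(\Delta t^{r})$. The last term is $\nabla_q f^o$ evaluated at the grid point, which is exactly the discretization of the source in (\ref{eq:LA+ArtSource}) up to the quadrature error in (\ref{eq:Nabla_qf^o}), hence $O(\Delta x)$ or better. Collecting, the residual of (\ref{fv:adjointFFSL4LA}) at the exact solution equals
\[
\rho\,\partial_t q^* + \nabla\cdot(\rho q^*\vec v) - q^*\,\nabla\cdot(\rho\vec v) - \nabla_q f^o \;+\; O(\Delta t^{\min(1,r)})+O(\Delta x^{p}),
\]
and the bracketed continuous expression vanishes identically because that is precisely equation (\ref{eq:LA+ArtSource}); so the local truncation error tends to zero, which is the claimed consistency.

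The one genuinely delicate point — and the step I would spend the most care on — is justifying that parent consistency, which is stated for the field $\rho q$ solving (\ref{eq:LinearAdvection}), may legitimately be re-applied with $q$ replaced by the unrelated smooth function $q^*$ and, more subtly, applied to a \emph{spatially constant} field to extract the $q^*\nabla\cdot(\rho\vec v)$ term. This is where linearity of the flux form in its $q$-argument is essential: since each $F_{ji}$ is linear in $\{q_k\}_{k\in K_{ji}}$ (this is the content of (\ref{fv:FFSL:NonFluxForm}), where the scheme is written with coefficients $\alpha_{ji}$ times $q_i^n$), the local truncation error functional is itself linear in the inserted smooth field, so "consistency for all smooth fields" follows from "consistency for the one smooth field arising as a solution" together with the fact that the family of solutions of (\ref{eq:LinearAdvection}) with varying smooth initial data spans, locally in space-time, enough function values and derivatives to pin down the approximation; alternatively, and more cleanly, one simply notes that the \emph{definition} of consistency in the sense of local truncation error for a linear scheme is a statement about the scheme's action on arbitrary smooth functions, not just on solutions, so no spanning argument is needed. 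If limiters are present the flux form is only piecewise linear, but the constant field $\phi\equiv q^{*,n}_j$ lies in the regime where FCT-type limiters are inactive (they do not modify a locally constant state), so the limited scheme agrees with its unlimited version on that argument and the same expansion goes through; I would state this as a short lemma or remark. Everything else is routine Taylor expansion and bookkeeping of the orders in $\Delta t$ and $\Delta x$.
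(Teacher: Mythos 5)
Your proposal is correct and follows essentially the same route as the paper's proof: a termwise consistency analysis that reuses the parent scheme's flux--divergence consistency for the $\nabla\cdot(\rho q^*\vec v)$ term and evaluates the flux operator on a locally constant field to extract $q^*\,\nabla\cdot(\rho\vec v)$. The only difference is that you make explicit two points the paper leaves implicit --- that consistency stated for the solution $q$ transfers to the unrelated smooth field $q^*$ (via linearity of the flux in its $q$-slot), and that limiters are inactive on a constant state --- which is a tightening of the same argument rather than a different one.
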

\begin{proof}
	From the consistency requirement of numerical scheme (\ref{fv:FFSL4LA}) with equation  (\ref{eq:LinearAdvection}) we easily 
	obtain consistency between corresponding terms in the scheme and equation. In particular it is clear that the expression 
\begin{equation}\label{consistency1}
\frac{\rho_j^{n+1} q_j^{n+1}-\rho_j^n q_j^n}{\Delta t}
														~\text{is consistent with}~
\frac{\partial(\rho q)}{\partial t}
\end{equation} 
at $(t_n, \xi_j)$ and therefore the expression 
\begin{equation}\label{consistency2}
\frac{1}{| \Omega_j |} \sum_{i\in I_j} F_{ji}(\{\bar{\rho}_k^n, \bar{v}_k^n,q_k^n\}_{k\in K_{ji}},\vec{n}_{ji}, l_{ji})
~\text{is consistent with}~
\nabla\cdot(\rho q\vec{v}).
\end{equation} 
at $(t_n, \xi_j)$. Assuming in the above expression $q_k=q_j$ means that $q$ is constant on the stencil and therefore 
\begin{equation}\label{consistency3}
\frac{1}{| \Omega_j |} \sum_{i\in I_j} F_{ji}(\{\bar{\rho}_k^n, \bar{v}_k^n,q_j^n\}_{k\in K_{ji}},\vec{n}_{ji}, l_{ji})
~\text{is consistent with}~
q_j \nabla\cdot(\rho \vec{v}).
\end{equation} 
Similarly with (\ref{consistency1}) it is evident that the first and fourth term in the numerical scheme with artificial source 
term (\ref{fv:adjointFFSL4LA}) are consistent at $(t_n, \xi_j)$ with first and fourth term in the adjoint equation  (\ref{eq:LA+ArtSource}). 
Substituting $q_j^n$ with $q_j^{*n}$ in (\ref{consistency2}) and (\ref{consistency3}) yields consistency at $(t_n, \xi_j)$ of the 
second and third terms in the numerical scheme (\ref{fv:adjointFFSL4LA}) with  the second and third term in the equation  (\ref{eq:LA+ArtSource}) 
that concludes the proof.
\end{proof}
\begin{theorem}[Stability theorem]\label{Th:Stability}
Suppose
\begin{enumerate}
	\item $\rho$ is time independent.
	\item $\rho \geq \rho_{min} >0$.
	\item $\rho, \vec{v},F, q, \tilde{q}$ are smooth enough.
	\item $\vec{q^n}, \vec{\tilde{q}}^{n}$ are numerical solutions constructed by numerical scheme (\ref{fv:FFSL4LA}) with initial values $\vec{q_0}, \vec{\tilde{q}}_{0}$ respectively.
	\item Under Courant-Friedrichs-Levy (CFL) condition with CFL  number $CFL_0$  the numerical scheme (\ref{fv:FFSL4LA}) is stable in some norm: 
	\begin{equation}\label{stab:FFSL4LA}
\|  \vec{q}^{~n+1} - \vec{\tilde{q}}^{~n+1} \| \leq (1+ C_0 \Delta t) \|  \vec{q}^{~n} - \vec{\tilde{q}}^{~n} \|, ~~0\leq n < N_t,
	\end{equation}
	constant $C_0$ is independent of $n$.
\end{enumerate}
If $1-5$ are valid the numerical scheme  with artificial source term (\ref{fv:adjointFFSL4LA}) is also stable under CFL condition with CFL 
number $CFL_*=2CFL_0$ in the same norm,
 	\begin{equation}\label{stab:adjointFFSL4LA}
 	\|  \vec{q}^{~*,n} - \vec{\tilde{q}}^{~*,n} \| \leq C_1 \|  \vec{q}^{~*}_0 - \vec{\tilde{q}}^{~*}_{0} \|, ~~0<n\leq N_t,
 	\end{equation}
where 	$C_1$ is some constant independent of $n$.
\end{theorem}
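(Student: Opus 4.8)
The plan is to pass to the difference of two adjoint solutions, show that this difference solves, up to a bounded zeroth order perturbation of size $O(\Delta t)$, the parent scheme (\ref{fv:FFSL4LA}), and then iterate the one step bound (\ref{stab:FFSL4LA}) over the $N_t$ time steps. I take $\vec q^{*,n}$ and $\vec{\tilde q}^{*,n}$ to be two solutions of (\ref{fv:adjointFFSL4LA}) driven by the same forward data, and I put $\vec e^{n}:=\vec q^{*,n}-\vec{\tilde q}^{*,n}$. The right hand side $\nabla_q f^o(q_j^{n},q_j^{o,n},t_n,\xi_j)$ of (\ref{fv:adjointFFSL4LA}) depends only on that forward data and on the observations, so it cancels in $\vec e^{n}$, which therefore satisfies the homogeneous version of (\ref{fv:adjointFFSL4LA}); if the two runs use different forward data one adds a Duhamel term controlled by hypothesis (5) for the forward scheme, but the clean bound (\ref{stab:adjointFFSL4LA}) corresponds to common forcing.

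By hypothesis (1), $\bar\rho_j^{n}=\rho_j^{n}=\rho_j^{n+1}=:\rho_j$, so (\ref{fv:FFSL4LA}) is explicit, $\vec q^{n+1}=\vec q^{n}-\Delta t\,M^{n}\vec q^{n}$ with $(M^{n}\vec q)_j:=\frac{1}{\rho_j|\Omega_j|}\sum_{i\in I_j}F_{ji}(\{\rho_k,\bar v_k^{n},q_k\}_{k\in K_{ji}},\vec n_{ji},l_{ji})$, and hypothesis (5) reads $\|I-\Delta t\,M^{n}\|\le 1+C_0\Delta t$ in the given norm under the CFL condition with number $CFL_0$ (in the limited case I keep (5) in its stated two-solution form). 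In the homogeneous (\ref{fv:adjointFFSL4LA}) the two flux sums combine, by linearity of $F_{ji}$ in its mixing ratio slot — in the limited case because the flux in the second sum has a \emph{constant} argument and hence an inactive limiter — into $\frac{1}{|\Omega_j|}\sum_{i\in I_j}F_{ji}(\{\rho_j,\bar v_k^{n},e_k^{n}-e_j^{n}\}_{k\in K_{ji}},\vec n_{ji},l_{ji})$, i.e. the parent spatial operator with the density frozen at $\rho_j$, applied to the field $(e_k^{n}-e_j^{n})_k$. Two elementary observations complete the identification. First, replacing the frozen $\rho_j$ by the true $\rho_k$ across the stencil is an $O(\Delta t)$ perturbation in the norm: by smoothness of $\rho$, $\rho_k-\rho_j=O(h_j)$ with $h_j$ the diameter of $\Omega_j$, while $\frac{1}{|\Omega_j|}\sum_{i\in I_j}l_{ji}=O(h_j^{-1})$ and $\rho_j^{-1}\le\rho_{min}^{-1}$ by hypotheses (2)--(3), so the discrepancy is a local operator of norm $O(1)$, contributing $\le C_\rho\,\Delta t\,\|\cdot\|$. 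Second, on the constant part, $\frac{1}{\rho_j|\Omega_j|}\sum_{i\in I_j}F_{ji}(\{\rho_j,\bar v_k^{n},1\}_{k\in K_{ji}},\vec n_{ji},l_{ji})=\tilde D_j^{n}$, which — exactly by the argument that gives (\ref{consistency3}) in Theorem \ref{Th:Consistency} — approximates $(\nabla\cdot\vec v)(\xi_j)$ and is therefore bounded, $|\tilde D_j^{n}|\le D_{max}$ uniformly in $j$ and $n$, by smoothness of $\vec v$. Hence $\vec e^{n+1}=(I-\Delta t\,M^{n})\vec e^{n}+\Delta t\,\mathrm{diag}(\tilde D^{n})\vec e^{n}+\Delta t\,E^{n}\vec e^{n}$ with $\|\mathrm{diag}(\tilde D^{n})\|\le D_{max}$ and $\|E^{n}\|\le C_\rho$.

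Invoking (\ref{stab:FFSL4LA}) for $I-\Delta t\,M^{n}$ and the triangle inequality gives the one step estimate $\|\vec e^{n+1}\|\le(1+C\Delta t)\|\vec e^{n}\|$, $C:=C_0+D_{max}+C_\rho$. Carrying the Courant constant through this step is where the factor two appears: (\ref{fv:adjointFFSL4LA}) combines two evaluations of the parent flux — one on $\vec e^{n}$, one on the frozen field $(e_j^{n})_k$ — and keeping the combination inside the admissible regime of (\ref{stab:FFSL4LA}) is what forces the CFL condition with number $CFL_*=2CFL_0$ rather than $CFL_0$. Iterating, $\|\vec e^{n}\|\le(1+C\Delta t)^{n}\|\vec e^{0}\|\le(1+C\Delta t)^{N_t}\|\vec e^{0}\|\le e^{CT}\|\vec e^{0}\|=:C_1\|\vec e^{0}\|$, and $\|\vec e^{0}\|$ is $\|\vec q^{*}_0-\vec{\tilde q}^{*}_0\|$, which is (\ref{stab:adjointFFSL4LA}) with $C_1=e^{CT}$ independent of $n$, $\Delta t$ and $N_t$.

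I expect the main obstacle to be precisely this transfer: (\ref{stab:FFSL4LA}) is stated for the single propagation operator of the parent scheme in an unspecified norm, whereas (\ref{fv:adjointFFSL4LA}) is assembled from two flux sums plus a zeroth order term, so one must (a) realise the combination as the parent operator in that same norm, (b) verify that the substitution $\bar\rho_k^{n}\mapsto\bar\rho_j^{n}$ is genuinely a bounded $O(\Delta t)$ perturbation there — which is exactly where hypotheses (2)--(3) are used — and (c) keep honest track of the admissible Courant number, which produces the doubling $CFL_*=2CFL_0$. In the limited case there is the additional minor point that $F_{ji}(\{\ldots,e_k^{n}\})-F_{ji}(\{\ldots,e_j^{n}\})$ has to be estimated through the Lipschitz continuity of $F$ furnished by hypothesis (3) instead of exact linearity.
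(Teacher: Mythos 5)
Your argument is essentially sound but follows a genuinely different route from the paper. The paper rewrites (\ref{fv:adjointFFSL4LA}) as the \emph{average of two half-schemes} (see (\ref{fv:adjointFFSL4LA:split1})): the first is the parent scheme run with effective time step $2\Delta t$, controlled directly by hypothesis (5), and the second is an explicit Euler update for the artificial source term, controlled by a Lipschitz bound (\ref{fv:adjointFFSL4LA:split2}) on the constant-argument flux sum; averaging the two one-step estimates and iterating gives $C_1=\exp((C_0+C_2)T)$. You instead keep the scheme intact and realise it as the parent propagation operator plus an $O(\Delta t)$ bounded zeroth-order perturbation (a discrete-divergence diagonal term plus a density-freezing error), then apply the triangle inequality and Gronwall. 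Both proofs ultimately rest on the same two unproven ingredients -- hypothesis (5) for the advective part and boundedness of the constant-argument flux sum, which the paper also merely asserts ("we can assume that there exists a constant $C_2$\dots") and which you justify by the same consistency argument (\ref{consistency3}) -- so your level of rigour matches the paper's. Your explicit treatment of the replacement $\bar{\rho}_k^n\mapsto\bar{\rho}_j^n$ as a perturbation is, if anything, more careful than the paper, which silently identifies the first sub-scheme of (\ref{fv:adjointFFSL4LA:split1}) with the parent scheme despite the frozen density.

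Two points deserve correction. First, your explanation of the factor $CFL_*=2CFL_0$ is not supported by your own argument: in your decomposition the parent operator $I-\Delta t\,M^n$ is applied with the \emph{original} time step, so hypothesis (5) is invoked at the original Courant number and no doubling is needed; the perturbation terms carry an explicit $\Delta t$ and do not tighten the CFL restriction. The doubling in the paper is a genuine artefact of the $\tfrac12$-$\tfrac12$ averaging (each half-scheme sees effective step $2\Delta t$), and your route simply avoids it -- which yields the slightly stronger conclusion anticipated by the paper's own Remark, but you should say so rather than manufacture a reason for the factor two. Second, your opening claim that the difference $\vec e^{\,n}$ "satisfies the homogeneous version" of (\ref{fv:adjointFFSL4LA}) uses linearity of the update in $q^*$, which fails when flux limiters are active; in that case you must work throughout with the two-solution form of hypothesis (5) and the Lipschitz continuity of $F$, as you acknowledge only in your closing sentence. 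That caveat should be promoted from an afterthought to the structure of the one-step estimate, since the limited case is the main selling point of the artificial source term method.
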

\begin{proof}
	Numerical scheme (\ref{fv:adjointFFSL4LA}) equivalently writes: 
\begin{multline} \label{fv:adjointFFSL4LA:split1}
\frac{1}{2}\bar{\rho}_j^n \frac{q^{*,1,n+1}_j- q^{*,n}_j}{\Delta t}+\frac{1}{| \Omega_j |} \sum_{i\in I_j} F_{ji}(\{\bar{\rho}_j^n, \bar{v}_k^n,q^{*,n}_k\}_{k\in K_{ji}},\vec{n}_{ji}, l_{ji})= 0,\\
\frac{1}{2}\bar{\rho}_j^n \frac{q^{*,2,n+1}_j- q^{*,n}_j}{\Delta t}=
\frac{1}{| \Omega_j |} \sum_{i\in I_j} F_{ji}(\{\bar{\rho}_j^n, \bar{v}_k^n,q^{*,n}_j\}_{k\in K_{ji}},\vec{n}_{ji}, l_{ji}) + \nabla_{q}f^o(q^{n}_j,q^{o,n}_j,t_n,\xi_j),\\
q^{*,n+1}_j=\frac{1}{2}(q^{*,1,n+1}_j+q^{*,2,n+1}_j).
\end{multline} 
The first numerical scheme in (\ref{fv:adjointFFSL4LA:split1}) can be obtained from (\ref{fv:adjointFFSL4LA}) by replacing $\bar{\rho}_j^{n+1}$ 
with $\bar{\rho}_j^n$, see requirement $1$ of the theorem, and by replacing $\Delta t$ with $2\Delta t$. 
Therefore, similar to (\ref{stab:FFSL4LA}), the estimate is valid for $q^{*,1,n+1}$ under the CFL condition with CFL number $CFL_*$ and we have:
\begin{equation}\label{stab:FFSL4LA:2}
\|  \vec{q}^{~*,1,n+1} - \vec{\tilde{q}}^{~*,1,n+1} \| \leq (1+ 2C_0 \Delta t) \|  \vec{q}^{~*,n} - \vec{\tilde{q}}^{~*,n} \|, ~~0\leq n < N_t.
\end{equation}
The first term in the right hand side of the second equation in (\ref{fv:adjointFFSL4LA:split1}) is consistent with $q_j \nabla\cdot(\rho \vec{v})$ 
according to theorem ~\ref{Th:Consistency}. Therefore, on account of the requirements of theorem ~\ref{Th:Stability}, 
we can assume that there exists a constant $C_2$ independent of $n$ such that the following inequality holds true: 
\begin{multline}\label{fv:adjointFFSL4LA:split2}
 \frac{2}{\bar{\rho}_j^n|\Omega_j |} | \sum_{i\in I_j}[ 
F_{ji}(\{\bar{\rho}_j^n, \bar{v}_k^n,q^{*,n}_j\}_{k\in K_{ji}},\vec{n}_{ji}, l_{ji})- 
F_{ji}(\{\bar{\rho}_j^n, \bar{v}_k^n,\tilde{q}^{*,n}_j\}_{k\in K_{ji}},\vec{n}_{ji}, l_{ji})] | \leq 
2C_2 |  q_j^{~*,n} -  \tilde{q}_j ^{~*,n} |.
\end{multline}
The second term on the right hand side of the second equation in (\ref{fv:adjointFFSL4LA:split1}) does not depend on $q^*$. Therefore, 
by analogy with explicit Euler time integration scheme on account of (\ref{fv:adjointFFSL4LA:split2}) we obtain the following stability inequality:
\begin{equation}\label{fv:adjointFFSL4LA:split3}
\|  \vec{q}^{~*,2,n+1} - \vec{\tilde{q}}^{~*,2,n+1} \| \leq (1+ 2C_2 \Delta t) \|  \vec{q}^{~*,n} - \vec{\tilde{q}}^{~*,n} \|, ~~0\leq n < N_t.
\end{equation}
Putting together the third equation in (\ref{fv:adjointFFSL4LA:split1}), (\ref{stab:adjointFFSL4LA}) and  (\ref{fv:adjointFFSL4LA:split3}) 
we obtain the following  stability estimate for $q*$ for one time step: 
\begin{equation}\label{fv:adjointFFSL4LA:split4}
\|  \vec{q}^{~*,n+1} - \vec{\tilde{q}}^{~*,n+1} \| \leq (1+ [C_0+C_2] \Delta t) \|  \vec{q}^{~*,n} - \vec{\tilde{q}}^{~*,n} \|, ~~0\leq n < N_t.
\end{equation}
From (\ref{fv:adjointFFSL4LA:split4}) we arrive to (\ref{stab:adjointFFSL4LA}) with $C_1=\exp((C_0+C_2)T)$ which concludes the proof.
\end{proof}
\begin{remark} 
	$CFL_*=2CFL_0$ is needed for the theoretical estimates only. In practice when calculating numerical tests the adjoint numerical 
	scheme with artificial source term is stable under the same CFL condition as a parent scheme. 
\end{remark}
\begin{remark} 
	Theorem \ref{Th:Stability} deals with time independent $\rho$. The case with time dependent $\rho$  can be also be easily treated 
	by a different equivalent formulation of the adjoint scheme with artificial source term. In particular a numerical scheme of the form 
\begin{equation*}
\bar{\rho}_j^n \frac{q^{*,n+1}_j- q^{*,n}_j}{\Delta t}=RHS^n
\end{equation*}
can be equivalently written as the following two step scheme:
\begin{equation*}
 \frac{\bar{\rho}_j^{n+1}q^{*,1,n+1}_j- \bar{\rho}_j^nq^{*,n}_j}{\Delta t}=RHS^n, ~~
 \frac{q^{*,2,n+1}_j- q^{*,1,n+1}_j}{\Delta t}= \frac{1}{\bar{\rho}_j^n}\frac{\bar{\rho}_j^{n+1}-\bar{\rho}_j^n}{\Delta t} q^{*,2,n+1}_j.
\end{equation*}
The second scheme is an implicit Euler scheme and obtaining stability estimate is trivial. For obtaining stability estimate for the first scheme 
theorem ~\ref{Th:Stability} can be invoked by similar arguments.
\end{remark}
\begin{remark}
	Theorems \ref{Th:Consistency} and \ref{Th:Stability} are sufficient for concluding convergence of artificial source term scheme since 
	consistency and stability ensure convergence according to Lax equivalence theorem.  
\end{remark}

\section{Numerical tests}
\label{sec:NumTests}
\subsection{Input for tests}

Here we consider two different types of tests: linear advection test cases and data assimilation test cases. Linear advection test 
cases are used for comparing adjoint schemes with their parent scheme. Therefore numerical calculations are done with three schemes: 
ICON-FFSL, adjoint scheme obtained by integration by parts formula,  and an adjoint scheme by introducing artificial source term. 
Two different flux limiters are used together with ICON-FFSL and with the adjoint with artificial source term. In particular flux 
limiters from ~\cite{zalesak1979fully}, \cite{schar1996synchronous} and from ~\cite{zalesak1979fully}, \cite{harris2011flux} are used.  
Data assimilation test cases are used for comparing two adjoint schemes presented in this paper. In both test cases the true solution 
is compared with numerical solutions computed with the above mentioned numerical schemes. Errors in numerical solutions are measured 
in three different absolute and relative  norms. In  particular the following norms are used:
\begin{multline*}
l_{1,rel}=\frac{\sum_{i=1}^{N_c}|\Omega_i||q_i-q_i^{true}|}{\sum_{i=1}^{N_c}|\Omega_i||q_i^{true}|}, ~~~~ 
l_{1,abs}=\sum_{i=1}^{N_c}|q_i-q_i^{true}|, \\
l_{2,rel}=\frac{\sqrt{\sum_{i=1}^{N_c}|\Omega_i|{(q_i-q_i^{true})}^2}}{\sqrt{\sum_{i=1}^{N_c}|\Omega_i|(q_i^{true})^2}}, ~~~~l_{2,abs}=\sqrt{\sum_{i=1}^{N_c}{(q_i-q_i^{true})}^2},\\
l_{\infty,rel}=\frac{\text{max}_{i=\overline{1,N_c}}|q_i-q_i^{true}|}{\text{max}_{i=\overline{1,N_c}}|q_i^{true}|}, ~~~~ 
l_{\infty,abs}=\text{max}_{i=\overline{1,N_c}}|q_i-q_i^{true}|.
\end{multline*}

Notice that $l_1$ and $l_2$ measure integral characteristics and $l_{\infty}$ measures the maximum local deviation from the true 
solution, but none of them measures either oscillations in the numerical solution directly or shape preservation. Therefore 
where appropriate these two criteria will be also used.  We also consider magnitude of cost function and  norms of its gradient as most 
important criteria for evaluating performance of numerical schemes in case of data assimilation tests cases that are given in the 
subsection ~\ref{sec:assimilation}.

Standard test cases are available in scientific literature, such as ~\cite{nair2008moving}, \cite{nair2010class}, \cite{nair2002mass}, 
\cite{williamson1992standard}, \cite{zalesak1979fully}. We collected them in the appendix \ref{A:Tests:Wind,Shape}, 
in  ~\cref{tab:InitCond}, and in ~\cref{tab:Velocity} as collection of initial conditions and velocity fields. 
In particular we define five different initial scalar fields with references in  ~\cref{tab:InitCond} and four different velocity 
vector fields as given in ~\cref{tab:Velocity}. Combination of in initial scalar fields and velocity vector fields are used for 
defining different test cases below. 

\subsection{Linear advection test cases}
\subsubsection{ICON-FFSL, standard adjoint and adjoint with artificial source term }
First we compare the parent scheme ICON-FFSL with  two derived adjoint schemes presented in this paper. Notice that ICON-FFSL 
is consistent with equation (\ref{eq:LinearAdvection}) and adjoint schemes are consistent with equation (\ref{eq:AdjointLinearAdvection}). 
These two equations coincide with each other if $\rho=const$, $\nabla_q f^o=0$ and  $div~\vec{v}=0$. The latter condition is 
satisfied by wind fields 1,2 and 4 in the ~\cref{tab:Velocity}. We also set $\rho=1$. With this selection of parameters we 
can compare all three numerical schemes against each other on solutions of the problem (\ref{eq:LinearAdvection}),(\ref{eq:InitCondition}). 
In particular we consider the following test problems
\begin{itemize}
	\item Solid body rotation with cosine bell. 
	\item Solid body rotation with slotted cylinder.
	\item Deformational flow with two cosine bells.
	\item Deformational flow with two slotted cylinders.
\end{itemize}

Numerical results for all these schemes are summarized in  ~\cref{tab:SolidBodyRotationcos,tab:SolidBodyRotationcyl,tab:DeformationalFlow1cos,tab:DeformationalFlow1cyl}  
and  on figures ~\cref{fig:CosineBellContours,fig:CosineBellCurve,fig:SlottedCylinderContours,fig:SlottedCylinderCurve}.  
From the numerical results we conclude the following:
\begin{enumerate}
	\item All three schemes produce almost similar numerical results if flux limiters are not used in ICON-FFSL and in the adjoint 
	      scheme with artificial source term.  
	\item If flux limiters are used then ICON-FFSL scheme and  adjoint scheme with artificial source term produce practically similar 
	      numerical results. Using different flux limiters has almost  no affect on accuracy of computation. 
	\item The standard adjoint scheme produces larger errors compared to the ICON-FFSL scheme and the adjoint scheme with artificial 
	      source term, if flux limiters are used, see ~\cref{fig:CosineBellCurve} and ~\cref{fig:SlottedCylinderCurve}.
	\item Tests are calculated on R2B4 grid with 20480 nodal points. If flux limiters are not applied then the maximum principle is 
	      violated in almost in a half of the nodal points, overshoots and undershoots result in nonphysical negative numerical values. 
	      ICON-FFSL and the adjoint with artificial source are free of these drawbacks when flux limiters are applied. 
\end{enumerate}

\subsubsection{Standard adjoint and adjoint with artificial source term }
In this subsection the adjoint scheme with artificial source term with flux limiters is compared with the standard adjoint scheme.  
Test problems featuring  deformational flow and moving vortices are considered. Numerical results are given 
in ~\cref{tab:DeformationalFlow2cos,tab:DeformationalFlow2cyl,tab:MovingVortices}, and 
in ~\cref{fig:DefFl1:2CosBellContour,fig:DefFl1:2CosBellCurve,fig:DefFl1:2SlottedCylinderContour,fig:DefFl1:2SlottedCylinderCurve,fig:DefFl2:2CosBellContour,fig:DefFl2:2CosBellCurve,fig:DefFl2:2SlottedCylinderContour,fig:DefFl2:2SlottedCylinderCurve,fig:MovingVortexContour,fig:MovingVortexAtT,fig:MovingVortexAtT/2}. 
The analysis of the numerical results suggests that the adjoint scheme with artificial source term is much more accurate then 
the standard adjoint scheme. In particular we observe the following properties:
\begin{enumerate}
	\item Standard adjoint scheme does not maintain the shape of the contours while the adjoint with artificial source is almost 
	      indistinguishable from the exact solution, see e.g. ~\cref{fig:MovingVortexContour}.
	\item The standard adjoint scheme produces larger errors compared to adjoint scheme with artificial source term. 
	\item Maximum principle is violated by the standard adjoint scheme resulting in overshoots and undershoots and negative 
	      concentrations in almost half of nodal points of the ICON grid.
	\item For some test problems the adjoint scheme with artificial source term with flux limiter ~ \cite{zalesak1979fully}, 
	      \cite{schar1996synchronous} also gives negative concentrations in almost a third of nodal points of 
	      the ICON grid, though negative values are of $10^{-8}$ magnitude at most. When flux limiter ~\cite{zalesak1979fully}, 
	      \cite{harris2011flux} is used then negative concentrations appear just in 14 nodal points and it's  magnitude 
	      is very small - $10^{-15}$. 
	\end{enumerate}

\subsection{Data assimilation test cases}
\label{sec:assimilation}
\subsubsection{Setup for tests}
\label{sec:setup}
In this subsection we study established test methods in the framework of passive tracer data assimilation. 
We compare against each other standard adjoint, artificial source term adjoint with limiter from ~\cite{zalesak1979fully}, 
\cite{schar1996synchronous} and without limiter. Numerical results of advection tests given in previous subsections have 
shown that standard adjoint and artificial source term adjoint without limiter produce almost similar results. Therefore 
for some tests one case of them only will be considered; artificial source term method with limiter is considered in all 
test cases.  The following three combinations of initial scalar fields and velocity vectors are selected:
\begin{enumerate}
\item Moving vortices that correspond to initial scalar field 3 from ~\cref{tab:InitCond} and to the vector field 
4 from ~\cref{tab:Velocity}. 
\item Deformational flow with cosine bells   that correspond to initial scalar field 4 from ~\cref{tab:InitCond} and 
      to the vector field 3 from  ~\cref{tab:Velocity}. 
\item Deformational flow with slotted cylinder that correspond to initial scalar field 5 from ~\cref{tab:InitCond} and 
      to the vector field 3 from ~\cref{tab:Velocity}. 
\end{enumerate}

Notice that for selected test problems with moving vortices $ div(\vec{v})=0$ and for the deformational 
flow $ div(\vec{v}) \neq 0$. 

Cost function ~(\ref{eq:CostFunc}) with discretization of the background term according to ~(\ref{eq:CostFunc:J^b}) and with 
discretization of the observation term corresponding to ~(\ref{eq:CostFunc:J^o}) is used in numerical tests. 
In this paper we do not study covariance models and the goal is development of adjoint solvers. 
Therefore operators $K_b$ and $K_o$ are set to identity operators in numerical tests.  The cost function also contains the 
functions $q_b$ and $q_o$ as input. Therefore for finalizing data assimilation related test problems we define observations 
and background initial condition. For different test problems this is done differently, in particular, observations $q_o$ 
are defined as follows:
\begin{itemize}
	\item For numerical tests with moving vortices exact solution is known for an arbitrary time moment $t$ and 
	therefore the exact solution is used for defining observations in selected points in space and in time, 
	i.e.  $q_o(t_k, {\xi}_i)=u_{exact}(t_k,{\xi}_i), ~k=1,2,..N_T,~i\in \mathcal{O}$ . 
	\item For numerical tests with deformational the flow exact solution is not known for an arbitrary time 
	moment $t$ and therefore ICON-FFSL will be used for computing a reference solution which is used for 
	defining observations in selected points in space 
	and in time, i.e.  $q_o(t_k, x_i)=u_{ICON-FFSL}(t_k,x_i), ~k=1,2,..N_T,~i\in \mathcal{O}$. 
\end{itemize}

The number of observation points and their location is different for different numerical tests and therefore they are presented 
together with concrete numerical tests in the next subsection.

The background initial condition is constructed as an error in the true initial condition. This is done in the following way:
\begin{itemize}
	\item For numerical tests with moving vortices the background initial condition is defined as the 
	true initial condition plus 	10 $\%$ of error in each nodal point. 
	\item For numerical tests with deformational flow in the true initial condition the error is introduced 
	in one half of computational domain only. For the second half of computational domain background initial 
	condition and true initial condition coincide with each other. These domains are selected such that one 
	cosine bell or slotted cylinder remains unchanged in the background initial condition. In those nodal 
	points where the error is introduced and where the true initial condition is not zero background initial condition 
	is defined as true initial condition plus 10 $\%$ of error. For the rest of nodal points of the half of 
	computational domain 1$\%$ of maximum of the true initial condition is added. 
\end{itemize}

The above procedure results in differences between true and background initial condition, details of which are 
given in the ~\cref{tab:Datass_norms_weights_0_1}.

For minimizing the cost function the quasi-Newton method LBFGS ~\cite{LiuNocedal1989} is used here. The developed solvers 
are applied for supplying gradient of the cost function in LBFGS. ICON-FFSL is used for forward 
runs in order to supply needed input data to adjoint solvers. Magnitude of the cost function is used as most 
suitable measure for comparing different adjoint solvers against each other. The difference between the true initial 
condition and the one found by minimization procedure can also be used where appropriate. 
Details on results of numerical tests are given in the next subsections. 

\subsubsection{Convergence of iteration process}
Here we study behavior of developed adjoint solvers by means of performing 300 LBFGS iterations. We suppose 300 iterations 
will be enough for demonstrating convergence of iteration process. Calculations are done on ICON R2B4 grid with 
5120 observation points. 
Observation points are distributed on the grid evenly. In case of 5120 observation points, every fourth point is observed.
For the test problem with moving vortices the initial cost function is around $2.5\cdot10^6$ and after 
final iterations the cost is reduced to $131.9$ for standard adjoint, to $30.7$ for artificial source term 
adjoint with limiter and to $19.2$ for artificial source term adjoint without limiter. For all three methods the progress of minimisation 
is illustrated on ~\cref{fig:MV5120_cost_it300_1-10,fig:MV5120_cost_it300_11-20,fig:MV5120_cost_it300_295-300}.  
For different number of iterations situation is different: for 10 iterations best reduction of the cost function is given 
by standard adjoint, for 100 iterations best result is given by artificial source term adjoint and for 300 iterations best 
result is given by artificial source term adjoint without limiter. 
Oscillations visible in ~\cref{fig:MV5120_cost_it300_11-20} are due to the restart in LBFGS method. 
Restart in minimisation is done when LBFGS algorithm reaches 5 attempts to find $\alpha$ step satisfying Wolfe 
conditions ~\cite{LiuNocedal1989}. After the restart background condition is updated by the last 
optimized initial condition and the minimisation process starts again. 
The figure shows that after restart the cost the function increases first and then it decreases again. Similar behavior is observed in case 
of all adjoint solvers considered. 

Background and observation costs are given in 
~\cref{fig:MV5120_obscost_it300_295-300} and ~\cref{fig:MV5120_bcost_it300_295-300} 
respectively for the iterations 295-300.  We see that the standard adjoint solver is better at minimizing 
the observation term, while the artificial source term adjoint solvers are much better in minimizing background 
term of the cost function. This example shows the importance of the right adjoint solvers on the 
minimization process: gradient defines descent direction and it is calculated using adjoint solver; therefore  
different adjoint solvers can lead to different priorities, e.g. which term to be reduced in the cost function. 

Standard adjoint and artificial source term adjoint  are numerically investigated on convergence  on test problem 
of deformational flow with two cosine bells on ICON R2B4 grid with 5120 observation points.  Initial cost function is 
around $3.5\cdot10^4$ and after 300 iterations the cost is reduced to $2.2$ for standard adjoint solver 
and to $4\cdot10^{-2}$ for artificial source term adjoint solver. Both methods ensure convergence, the process is 
illustrated on  ~\cref{fig:DF3cos5120_cost_it300_1-10,fig:DF3cos5120_cost_it300_11-20,fig:DF3cos5120_cost_it300_295-300}. 
Before 30 iterations standard adjoint gives better results and after 30 iterations cost function corresponding to 
artificial source term adjoint is smaller. In this test problem we do not observe a similar effect as in previous test 
problem behavior in the quest for prioritisation of the observation or background term. Both methods minimize 
background and observation terms of the cost function in the similar way. 

These numerical results suggest that for the selected test problems all considered adjoint solvers ensure convergence of 
iteration process, initial cost function is reduced approximately $10^5$ times after 300 iterations and 
those adjoint solvers that give best reduction of the cost function after 300 iterations are different from those 
which give better results for smaller number, e.g. 10 iterations. 

\subsubsection{Impact of number of observations}
We study numerically three test problems given in the subsection \ref{sec:setup}. 
Calculations are done on ICON R2B4 grid with 2560, 5120, 10240 and 20480 observation points for 50 LBFGS iterations. 
Notice that R2B4 grid contains 20480 nodal points, i.e. we also consider the idealized case when observations are given in 
all nodal points of the grid. Numerical results are given in ~\cref{fig:MV_R2B4_cost_it50} for tests with moving vortexes, 
in the ~\cref{fig:DF3cos_R2B4_cost_it50} for tests with deformational flow and cosine bells, 
in the  ~\cref{fig:DF3cyl_R2B4_cost_it50} for tests with deformational flow and slotted cylinders. These  results suggest 
the following: 
\begin{itemize}
	\item  In case of using standard adjoint solver the cost function monotonically increases together with the number of 
	observation points for tests with deformational flow. For tests with moving vortices cost function first decreases 
	for 5120 observation points and then it increases monotonically together with number of observation points.
	\item In case of using artificial source term adjoint solver with limiter the cost function decreases together with 
	the number of observation points for tests with deformational flow and cosine bells. For a deformational flow 
	with slotted cylinders the cost function increases for 20480 observation points though the growth is not as dramatic 
	as in case of using standard adjoint solver. For tests with moving vortices cost function also decreases though we 
	observe oscillations and peaks for 10240 observation points. 
\end{itemize}

Numerical results suggest that the efficiency of iterations decreases when increasing number of observation points, i.e. 
for the same number of iterations we get a larger cost function in case of more observation points. Using different adjoint 
solvers influences minimization process differently and the adjoint solver with artificial source term offers more reliable 
behavior when increasing number of observation points. 

\subsubsection{Effect of mesh refinement}
Mesh refinement is a standard procedure for studying numerically convergence of numerical methods. As a result of mesh 
refinement accuracy increases for convergent methods. Namely ICON FFSL and its descendant adjoint solvers should produce 
more accurate solutions on refined meshes. Here we study the effect of mesh refinement on selected test problems in the 
context of variational data assimilation. In numerical tests given in previous subsections the $R2B4$ grid was used. 
Here we consider sequence of grids $R2B4,R2B5,R2B6,R2B7$ with 20480 observation points. 
Corresponding cost functions are given in \cref{tab:Datass_mesh_ref}. We observe that the final cost 
function increases together with the mesh refinement since we keep same number of observation points for all meshes. 
Notice that for each test problem the starting value of cost function is almost the same for all meshes. 
Though after 50 iterations we observe large difference in cost function evolution as impact of using different adjoint solvers. 
Namely for the same number of iterations the artificial source term adjoint solver with limiter yields cost function values which are 
from 8 to 660  times smaller than cost functions obtained with the adjoint solver without limiter.
  
\subsubsection{Manipulating wieghts of background and observation terms}
Weights can be used for prioritizing the background or observation term in the cost function. In numerical tests 
considered in previous subsections  the weight was set to $0.5$ for both terms. Here in numerical tests  we gradually increase 
the relative 
weight of observation term up to $1$ and at the same time we decrease weight of background term down to $0$. Corresponding 
cost functions on $R2B4$ grid for $50$ LBFGS iterations and $5120$ observation points are 
given on ~\cref{fig:MV_weights,fig:DF3cos_weights,fig:DF3cyl_weights}. These figures suggest the following:
\begin{itemize}
\item For tests with moving vortices cost functions decrease when weight of observation term increases.  
Cost function values calculated by artificial source term solver without limiter is 
smaller then cost function with the same solver using a limiter, namely when background term disappears, i.e. weight of 
observation term is $1$, we have  $J_{o,Art.S.NoLimiter}/J_{o,Art.S.WithLimiter}= 0.248777957$.
\item  For tests with deformational flow and cosine bells cost function valuess decrease when the weight of observation 
term increases.  Cost function values calculated by artificial source term solver with limiter is smaller then cost 
function produced by standard adjoint solver, we have  $J_{o,Std.Adjoint}/J_{o,Art.S.WithLimiter}= 1.878180425$.
\item For tests with deformational flow and slotted cylinders cost functions increase together with weight of observation 
term. If the standard adjoint solver is used the growth of the cost function 
is faster and  its value is also bigger compared to the case when the artificial source term adjoint with limiter 
is used, $J_{o,Std.Adjoint}/J_{o,Art.S.WithLimiter}= 58.210774364$.
\end{itemize} 
In case when the background term disappears in the cost function the true initial scalar field can be compared with those 
found by data assimilation. The results are given in the 
 ~\cref{tab:Datass_norms_weights_0_1}. They suggest the following
\begin{itemize}
	\item For tests with moving vortices some error norms are approximately two times smaller in case of using 
	artificial source term adjoint without limiter compared to the same solver with limiter. 
	\item For tests with deformational flow and cosine bells the standard adjoint solver gives better result compared 
	to artificial source term adjoint solver with limiter, 
	namely  in $l_1,l_{1,rel},l_2,l_{2,rel}$ norms error is slightly smaller and for $l_{\infty},l_{\infty,rel}$ norms 
	the error is approximately 3 times smaller. 
	\item For tests with deformational flow and slotted cylinders using the adjoint solver with artificial source term 
	with limiter gives approximately 7 times smaller error norms compared 
	to the case when standard adjoint solver is used.  
\end{itemize}
Putting together ~\cref{fig:MV_weights,fig:DF3cos_weights,fig:DF3cyl_weights} and \cref{tab:Datass_norms_weights_0_1} we 
conclude that value of cost function is a good 
indicator for comparing adjoint solvers in the sense that if the cost function  is smaller, then the solution of the variational 
data assimilation better approximates the true initial condition. We observe this behavior in case of  
numerical tests considered with moving vortices and slotted cylinders. For the test problem with cosine bells this conclusion is 
not valid for 5120 observations and it is true for 20480 observations.

\section{Conclusions}
We considered two approaches for developing adjoints of ICON-FFSL schemes for the linear advection equation in the 
ICON model. The first approach is standard and it is used for building the adjoint of ICON-FFSL scheme without flux 
limiters. Another approach is new and it is used for constructing the adjoint scheme of ICON-FFSL with or without 
flux limiters. The new approach is based on rewriting the adjoint equation in flux form using an artificial source term and 
then using the same discretization method as its parent ICON-FFSL scheme.  Because of this feature the development 
of adjoint solver is easy and very fast if,as it is typically the case, the parent forward solver is available. 
Another advantage is that flux limiters can be applied in our artificial source term method in a straightforward way 
thus ensuring 
stability regardless of smoothness of the solution. Stability and consistency of the adjoint scheme is proved 
under assumption that the parent scheme has these properties. ICON-FFSL and its decsendant adjoint solvers are compared 
against each other on a collection of standard advection test cases and variational data assimilation test cases 
developed here. For some test cases when the solution is smooth, the adjoint solvers without limiter can provide competitive 
and even smaller errors. For other and more realistic of test cases the artificial source term solver with limiter is the 
clear winner ensuring reduction of the cost function even in cases where adjoint solvers without limiter fail to 
perform minimization.

\section{Appendicies}
\subsection{Input for tests: initial condition and wind field}
\label{A:Tests:Wind,Shape}
\begin{table}[h!] 
\centering
\caption{Initial scalar fields}\label{tab:InitCond}
\begin{adjustbox}{width=1\textwidth}
	\begin{tabular}{|l|l|c|c|} 
	\hline 
	 \textbf{No} &  \textbf{Initial scalar field} & \textbf{Parameters} & \textbf{Source}	\\
	\hline 
	1 & Cosine bell &  & \\
 	  &  $\begin{array}{lcl}
             q(\lambda,\theta, t_0)=\begin{cases}
	                         \frac{h_{max}}{2}(1+\cos\frac{\pi r}{\tilde{r}}), & \text{if } r<\tilde{r}\\	
	                         0,              & \text{otherwise}
	                           \end{cases} 
             \end{array}$ 
          & \makecell{ $r=arccos(\sin\theta_c\sin\theta+\cos\theta_c\cos\theta\cos(\lambda-\lambda_c))$
	\\$h_{max}=1$ \ \  $\tilde{r}=1/3$ \ \ $(\lambda_c,\theta_c)=(3\pi/2,0)$ } 
          &  \cite{williamson1992standard}  \\
	\hline 
	2 & Slotted cylinder &  & \\
	  & $\begin{array}{lcl}
	q(t_0,\lambda, \theta)=\begin{cases}
	c, & \text{if } r\leq\tilde{r}, |\lambda-\lambda_c|\geq\frac{\tilde{r}}{6}, \\
	c, & \text{if } r\leq\tilde{r}, |\lambda-\lambda_c|<\frac{\tilde{r}}{6},\ \  \theta-\theta_c<\frac{2}{3}\tilde{r}, \\
	b,              & \text{otherwise}
	\end{cases}
	\end{array}$ & \makecell{ $r=arccos(\sin\theta_c\sin\theta+\cos\theta_c\cos\theta\cos(\lambda-\lambda_c))$ 
	               \\ $\tilde{r}=1/2$ \\ $c=1$ \ \ $b=0$ \ \ $(\lambda_c,\theta_c)=(3\pi/2,0)$} & \cite{zalesak1979fully}  \\
	\hline
	3 & Vortex &  & \\
	  & $\begin{array}{lcl}
	         q(t_0,\lambda^{'}, \theta^{'})=1-tanh\big[\frac{\tilde{\rho}}{\gamma}\sin(\lambda^{'}-\omega(\theta^{'}) t)\big]
	     \end{array}$ 
	  & \makecell{ $\lambda^{'}(\lambda,\theta)=\text{arctan}\big[\frac{\cos\theta\sin(\lambda-\lambda_p)}
	                                                                   {\cos\theta\sin\theta_p\cos(\lambda-\lambda_p)-\cos\theta_p\sin\theta}\big]$,\\ 
	               $\theta^{'}(\lambda,\theta)=\text{arcsin}[\sin\theta\sin\theta_p+\cos\theta\cos\theta_p\cos(\lambda-\lambda_p)]$,\\
	               $\begin{array}{lcl}
	                  \omega(\theta^{'})=\begin{cases}
                                                V/(R\tilde{\rho}), & \text{if } \tilde{\rho}\neq0, \\
                                                0, & \text{otherwise}.
                                             \end{cases}
	                \end{array}$, \\
	               $V=v_0\frac{3\sqrt{3}}{2}sech^2(\tilde{\rho})tanh(\tilde{\rho})$ \\ 
	               $v_0=2\pi R/T$ \ \ $\tilde{\rho}=\tilde{\rho_0}\cos\theta^{'}$ \\
	               $R=6.371229\times10^6 [m]$ \ \  $T=1036800[s]$  \\
	               $(\lambda_p,\theta_p)=(\pi-0.8+\pi/4,\pi/4.8)$ \\
	               $\gamma=5$ \ \ $\tilde{\rho_0}=3$ } 
	  & \cite{nair2002mass}  \\
	\hline
	4 & Two cosine bells &  & \\
 	  & $\begin{array}{lcl}
         q(t_0,\lambda,\theta)=\begin{cases}
         b+ch_1(\lambda, \theta), & \text{if } r_1<\tilde{r}, \\
         b+ch_2(\lambda, \theta), & \text{if } r_2<\tilde{r}, \\
         b, & \text{otherwise,}
         \end{cases}\end{array}$ & \makecell{ $\begin{array} {lcl}
	h_i(\lambda, \theta)=\begin{cases}
	\frac{h_{max}}{2}(1+\cos\frac{\pi r}{\tilde{r}}), & \text{if } r_i<\tilde{r}\\	
	0,              & \text{otherwise}
	\end{cases} 
	\end{array}$  \\ $r_i=arccos(\sin\theta_{c,i}\sin\theta+\cos\theta_{c,i}\cos\theta\cos(\lambda-\lambda_{c,i}))$
	\\$i=1,2$ \ \ $h_{max}=1$ $\tilde{r}=1/2$ \ \ $c=1$ \ \ $b=0$ \\ 
	$(\lambda_{c,1},\theta_{c,1})=(3\pi/4,0)$ \ \ $(\lambda_{c,2},\theta_{c,2})=(5\pi/4,0)$ } 
          &  \cite{williamson1992standard}  \\
 	\hline
 	5 & Two slotted cylinders &  & \\
 	  & $\begin{array}{lcl}
 	     q(t_0,\lambda, \theta)=\begin{cases}
c, & \text{if } r_i\leq\tilde{r}, |\lambda-\lambda_{c,i}|\geq\frac{\tilde{r}}{6} \ \ \text{for } i=1,2, \\
c, & \text{if } r_1\leq\tilde{r}, |\lambda-\lambda_{c,1}|<\frac{\tilde{r}}{6},\ \  \theta-\theta_{c,1}<-\frac{5}{12}\tilde{r}, \\
c, & \text{if } r_2\leq\tilde{r}, |\lambda-\lambda_{c,2}|<\frac{\tilde{r}}{6},\ \  \theta-\theta_{c,2}>\frac{5}{12}\tilde{r}, \\
b,              & \text{otherwise},
\end{cases}
 	    \end{array}$ &\makecell{$r_i=arccos(\sin\theta_{c,i}\sin\theta+\cos\theta_{c,i}\cos\theta\cos(\lambda-\lambda_{c,i}))$ \\
 	    $i=1,2$ \ \ $\tilde{r}=1/2$ \ \ $c=1$ \ \ $b=0$ \\ 
 	    $(\lambda_{c,1},\theta_{c,1})=(3\pi/4,0)$ \ \ $(\lambda_{c,2},\theta_{c,2})=(5\pi/4,0)$ } 
 	    & \cite{nair2010class} \\
 	    \hline
	\end{tabular}	
\end{adjustbox}	
\end{table}
\begin{table}[h!]
\centering
\caption{Velocity vector}\label{tab:Velocity}
\begin{adjustbox}{width=1\textwidth}	
	\begin{tabular}{|l|l|c|c|}
	\hline
	\textbf{No} & \textbf{Velocity vector} & \textbf{Parameters} & \textbf{Source}	\\
	\hline
	1 & \multicolumn{2}{l|}{Solid body rotation: $\nabla\cdot \vec{v}=0$} &  \\ 
	  & \multicolumn{2}{l|}{Exact solution: after complete revolution, initial field reaches starting position} &  \\
        \hline
	  & $\begin{array} {lcl}
	v_\lambda(\lambda,\theta)=u_0(\cos\theta\cos\alpha+\sin\theta\cos\lambda\sin\alpha),\\
	v_\theta(\lambda,\theta)=-u_0\sin\lambda\sin\alpha
	\end{array}$  & \makecell{ $u_0=2\pi R/T$ \\ $R=6.371229\times10^6 [m]$ \ \ $T=1036800[s]$ \\ $\alpha=0^\circ$ \ \  $0\leq t\leq T$ } & \cite{williamson1992standard}  \\
	\hline
	2 & \multicolumn{2}{l|}{Deformational flow: $\nabla\cdot \vec{v}=0$} &  \\ 
	  & \multicolumn{2}{l|}{Exact solution: after complete revolution, initial field reaches starting position} &  \\
        \hline	  
	  & $\begin{array}{lcl}
	v_\lambda(\lambda,\theta, t)=k\sin^2(\lambda/2)\sin(2\theta)\cos(\pi t/T),\\
	v_\theta(\lambda,\theta, t)=\frac{k}{2}\sin\lambda\cos\theta\cos(\pi t/T)
	\end{array}$ &  $k=2.4$  \ \ $0\leq t\leq T$ & \cite{nair2010class}  \\
	\hline
	3 & \multicolumn{2}{l|}{Deformational flow: $\nabla\cdot \vec{v}\neq0$} &  \\ 
	  & \multicolumn{2}{l|}{Exact solution: after complete revolution, initial field reaches starting position} &  \\
        \hline	  
	  & $\begin{array}{lcl}
	v_\lambda(\lambda,\theta, t)=-k\sin^2(\lambda/2)\sin(2\theta)\cos^2\theta\cos(\pi t/T),\\
	v_\theta(\lambda,\theta, t)=\frac{k}{2}\sin\lambda\cos^3\theta\cos(\pi t/T)
	\end{array}$ & $k=1$ \ \ $0\leq t\leq T$  & \cite{nair2010class}  \\
	\hline
	4 & \multicolumn{2}{l|}{Moving vortices: $\nabla\cdot \vec{v}=0$} &  \\ 
	  & \multicolumn{2}{l|}{Exact solution: Forward- $q(t,\lambda,\theta)=1-tanh[\frac{\tilde{\rho}}{\gamma}\sin(\lambda^{'}-\omega t)]$, 
	                                       Backward- $q(t,\lambda,\theta)=1-tanh[\frac{\tilde{\rho}}{\gamma}\sin(\lambda^{'}+\omega t)]$ } &  \\
        \hline	  
	  & $\begin{array}{lcl}
	     v_\lambda(t,\lambda,\theta)=u_0(\cos\theta\cos\alpha+\sin\theta\cos\lambda\sin\alpha)+\\
	                                 R\omega(\theta^{'})(\sin\theta_c\cos\theta-\cos\theta_c\cos(\lambda-\lambda_c)\sin\theta), \\
	     v_\theta(t,\lambda,\theta)=-u_0\sin\lambda\sin\alpha,+R\omega(\theta^{'})(\cos\theta_c\sin(\lambda-\lambda_c)).
	     \end{array}$  
	 & \makecell{ $V=v_0\frac{3\sqrt{3}}{2}sech^2(\tilde{\rho})tanh(\tilde{\rho})$ \\ 
	              $v_0=2\pi R/T$\ \ $\tilde{\rho}=\tilde{\rho_0}\cos\theta^{'}$ \ \ $\tilde{\rho_0}=3$\\
	              $R=6.371229\times10^6 [m]$ \ \ $T=1036800[s]$ \\
                      $\begin{array}{lcl}
                         \omega(\theta^{'})=\begin{cases}
                                              V/(R\tilde{\rho}), & \text{if } \tilde{\rho}\neq0, \\
                                              0, & \text{otherwise}.
                                            \end{cases} 
                       \end{array}$  \\ 
                      $(\lambda_c,\theta_c)=(\lambda_p+\omega t_n, \theta_p)$ \\
                      $(\lambda_p,\theta_p)=(\pi-0.8+\pi/4,\pi/4.8)$ \\ 
                      $t_n=n\Delta t$ \ \  $\Delta t=600[s]$ \ \
                      $0\leq t\leq T$ } 
         & \cite{nair2008moving}  \\
	\hline		  
	\end{tabular}
\end{adjustbox}	
	\label{tab2}	
\end{table} 

   \clearpage
\subsection{Numerical results of advection tests}
\label{A:AdvecNumerical}
    
\subsubsection{Figures for advection tests}
\label{A:AdvTestPlots}

\begin{figure}[b]   
\begin{subfigure}{0.5\textwidth}
  \centering
  \includegraphics[width=\textwidth]{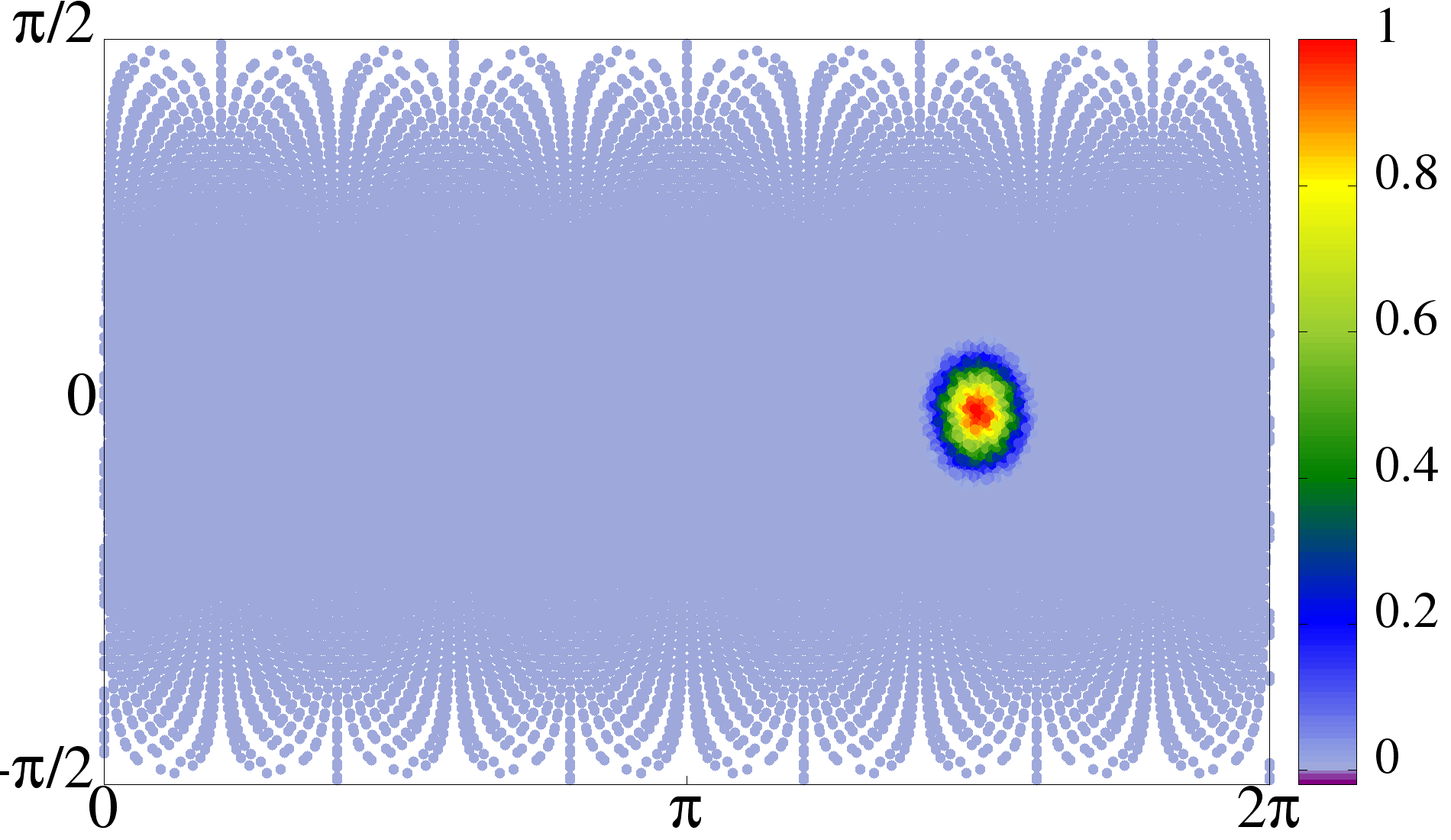}
  \caption{}
  \label{fig:PACosineExact}
\end{subfigure}%
\begin{subfigure}{0.5\textwidth}
  \centering
  \includegraphics[width=\textwidth]{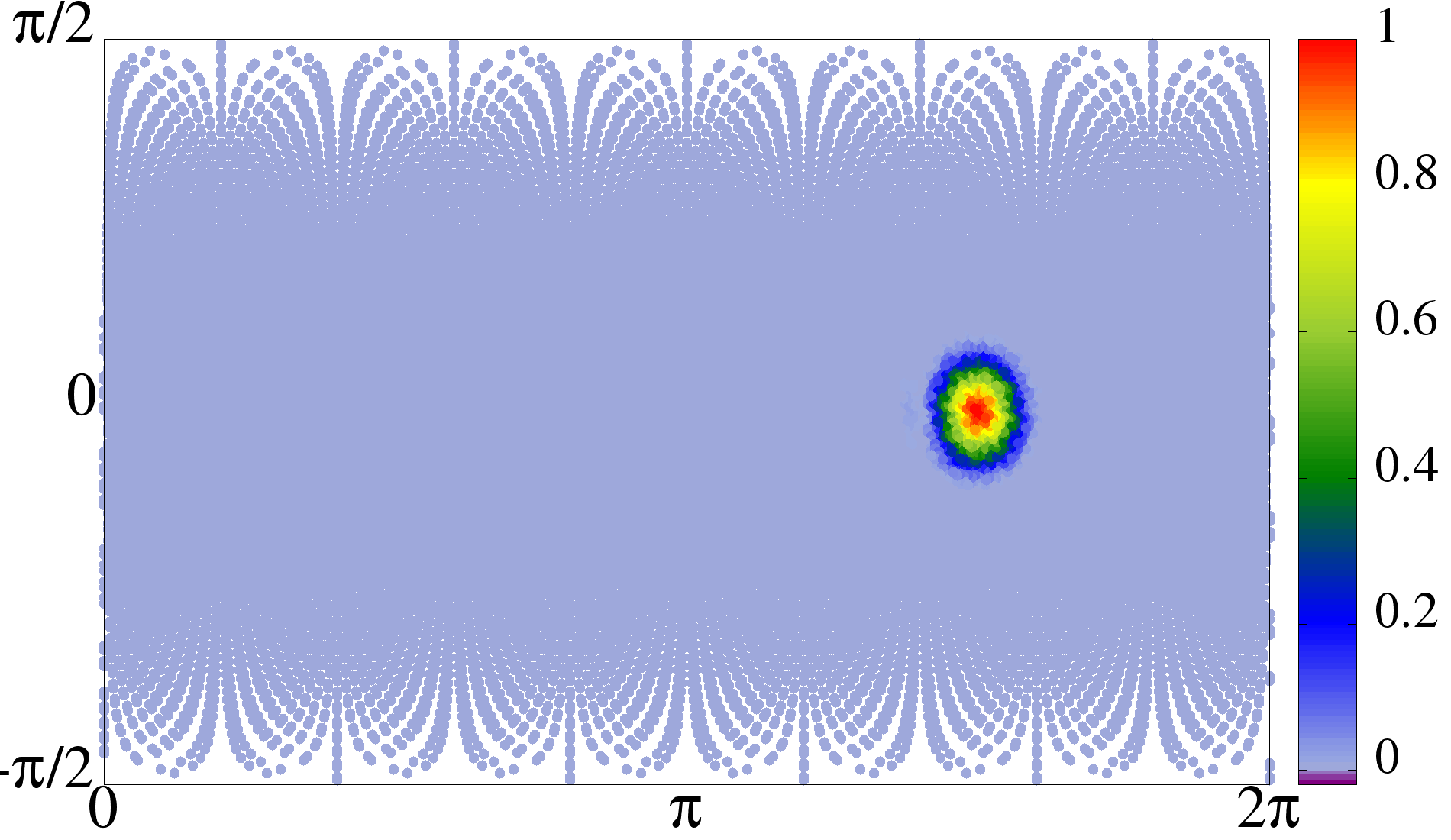} 
  \caption{}
  \label{fig:PACosineICON}
\end{subfigure}
\begin{subfigure}{0.5\textwidth}
  \centering
  \includegraphics[width=\textwidth]{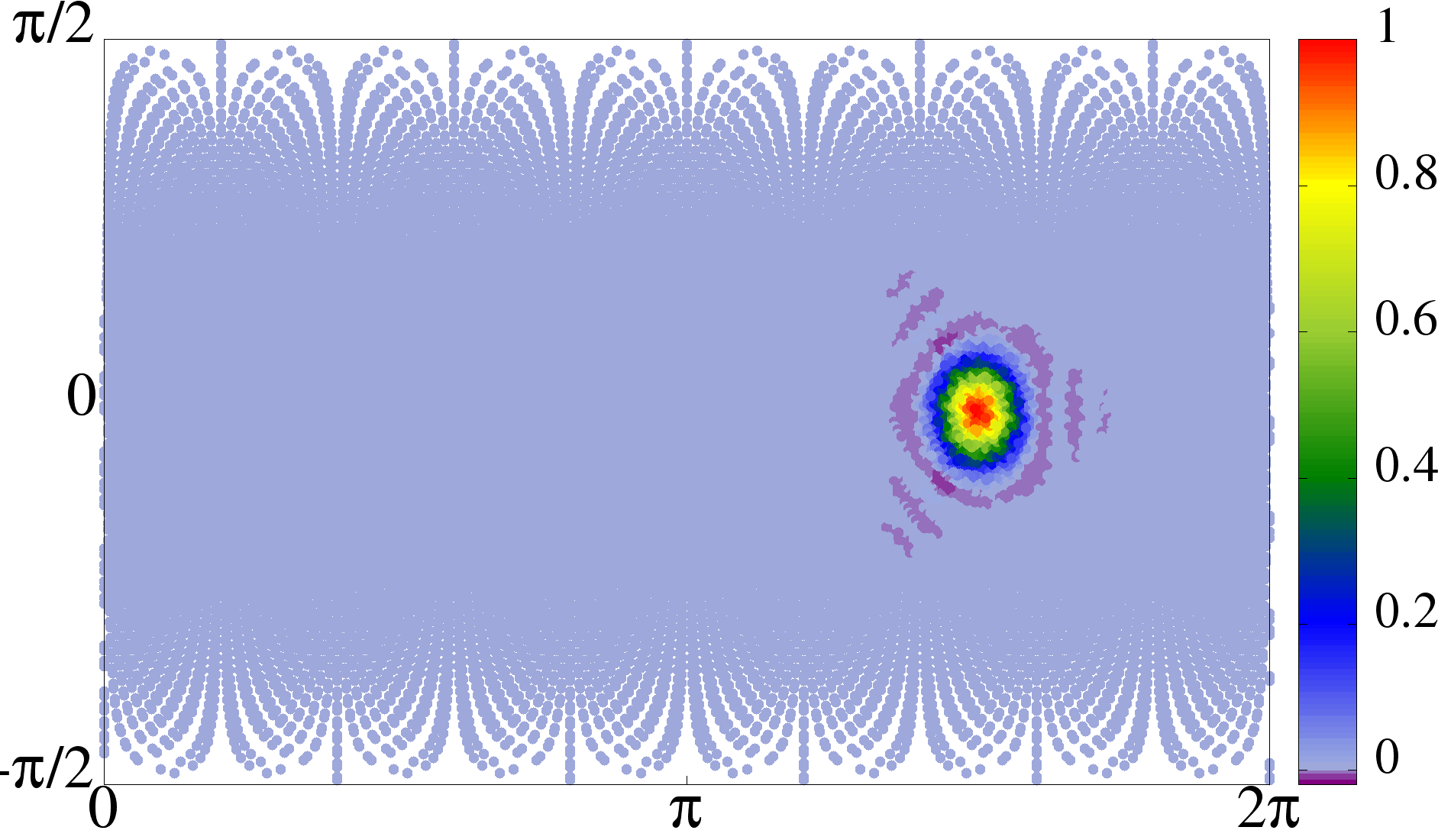}
  \caption{}
  \label{fig:PACosineAdjoint}
\end{subfigure}%
\begin{subfigure}{.5\textwidth}
  \centering
  \includegraphics[width=\textwidth]{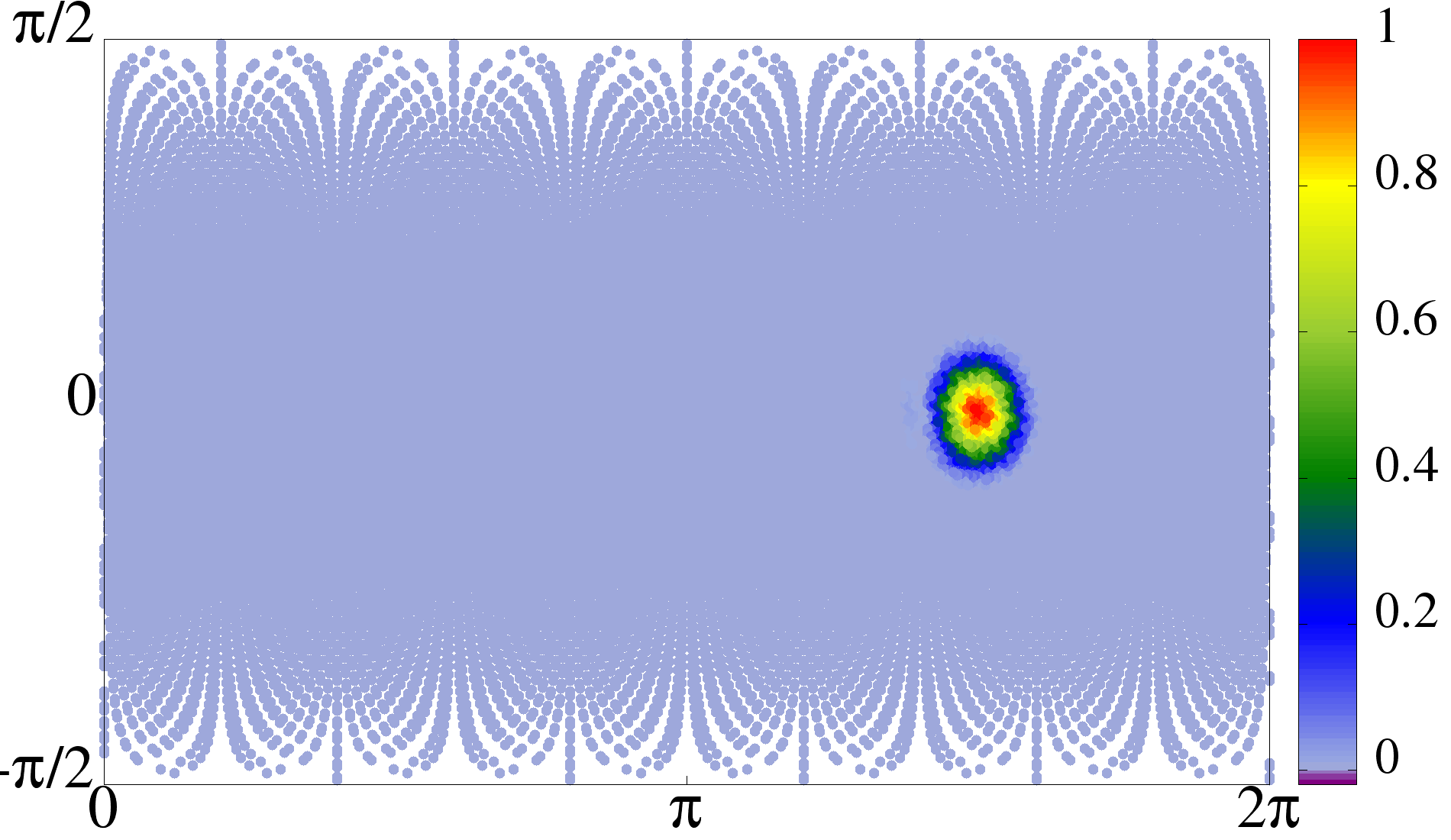}
  \caption{}
  \label{fig:PACosineSource}
\end{subfigure}
\caption{Solid body rotation, cosine bell, a)-d) - contour plots with color bar: 
                                           a) exact solution 
                                           b) ICON-FFSL, limiter ~\cite{zalesak1979fully}, \cite{harris2011flux}
                                           c) standard adjoint
                                           d) art. source adjoint, limiter ~\cite{zalesak1979fully}, \cite{harris2011flux}
                                           }
\label{fig:CosineBellContours}
\end{figure}

\begin{figure}[h]
\begin{subfigure}{.5\textwidth}
  \centering
  \includegraphics[width=\textwidth]{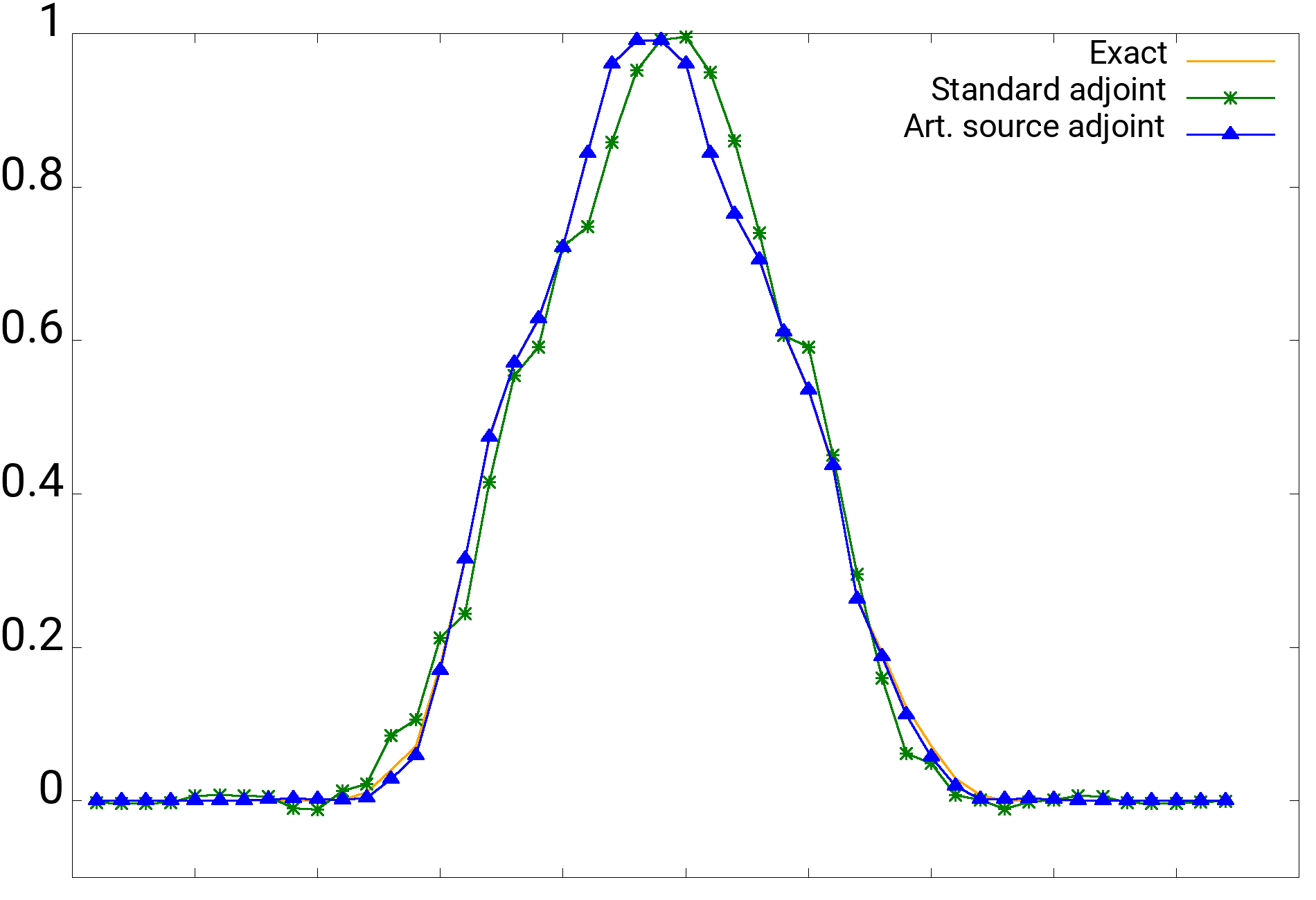}
  \caption{}
  \label{fig:PACosineAdjointSourceLim3}
\end{subfigure}%
\begin{subfigure}{.5\textwidth}
  \centering
  \includegraphics[width=\textwidth]{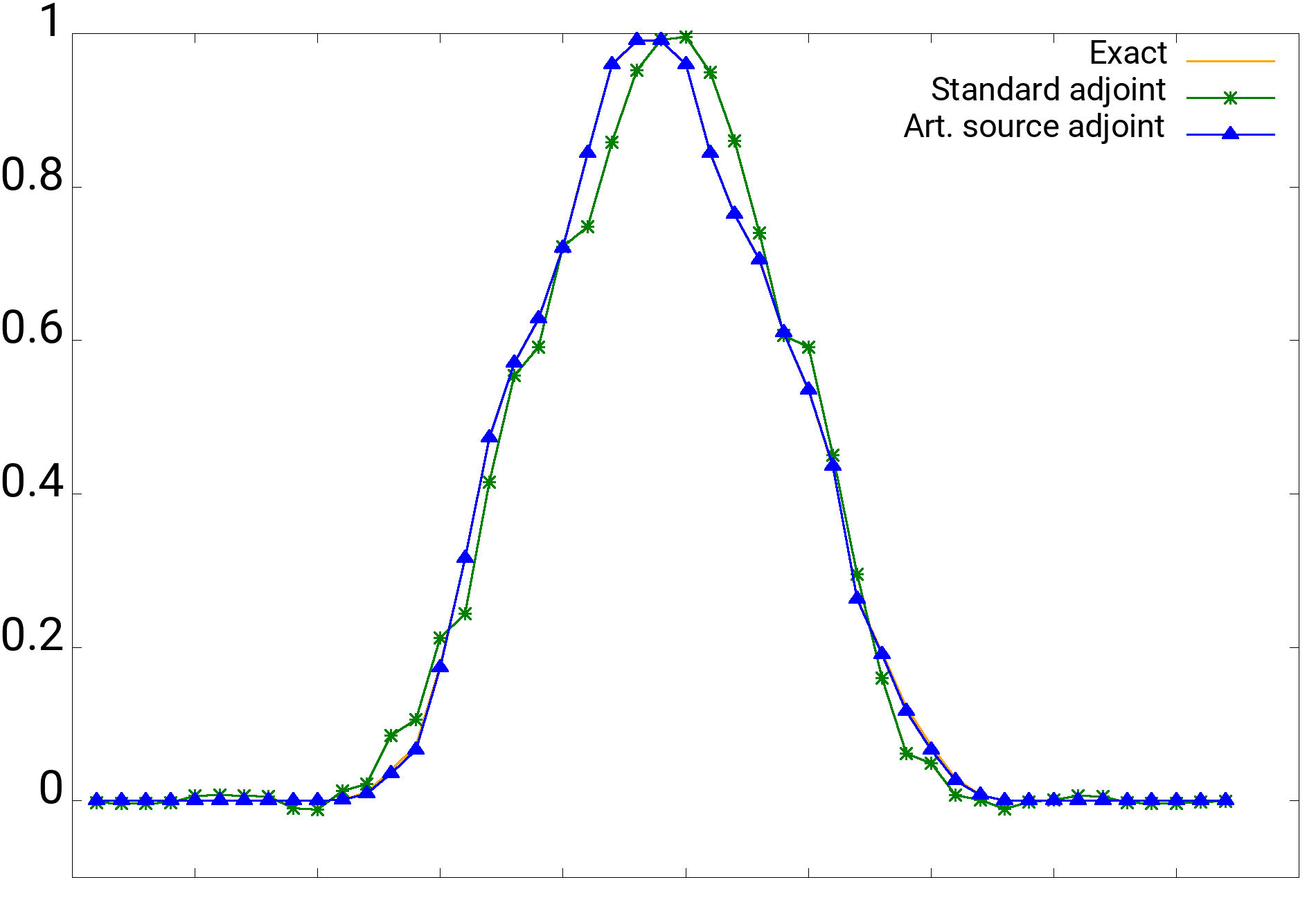}
  \caption{}
  \label{fig:PACosineAdjointSourceLim4}
\end{subfigure}
\begin{subfigure}{.5\textwidth}
  \centering
  \includegraphics[width=\textwidth]{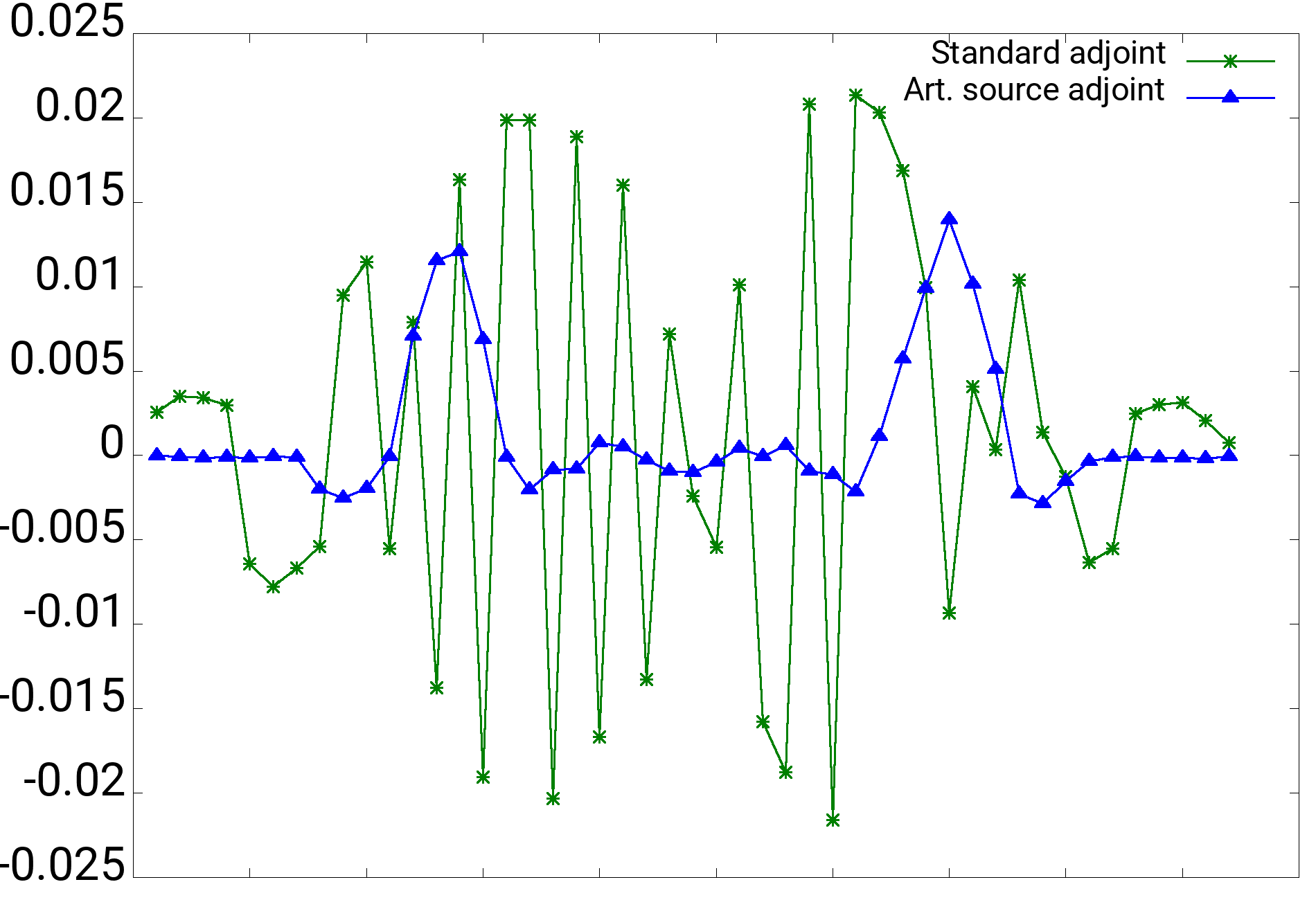}
  \caption{}
  \label{fig:PACosineAdjointSourceDiffLim3}
\end{subfigure}%
\begin{subfigure}{.5\textwidth}
  \centering
  \includegraphics[width=\textwidth]{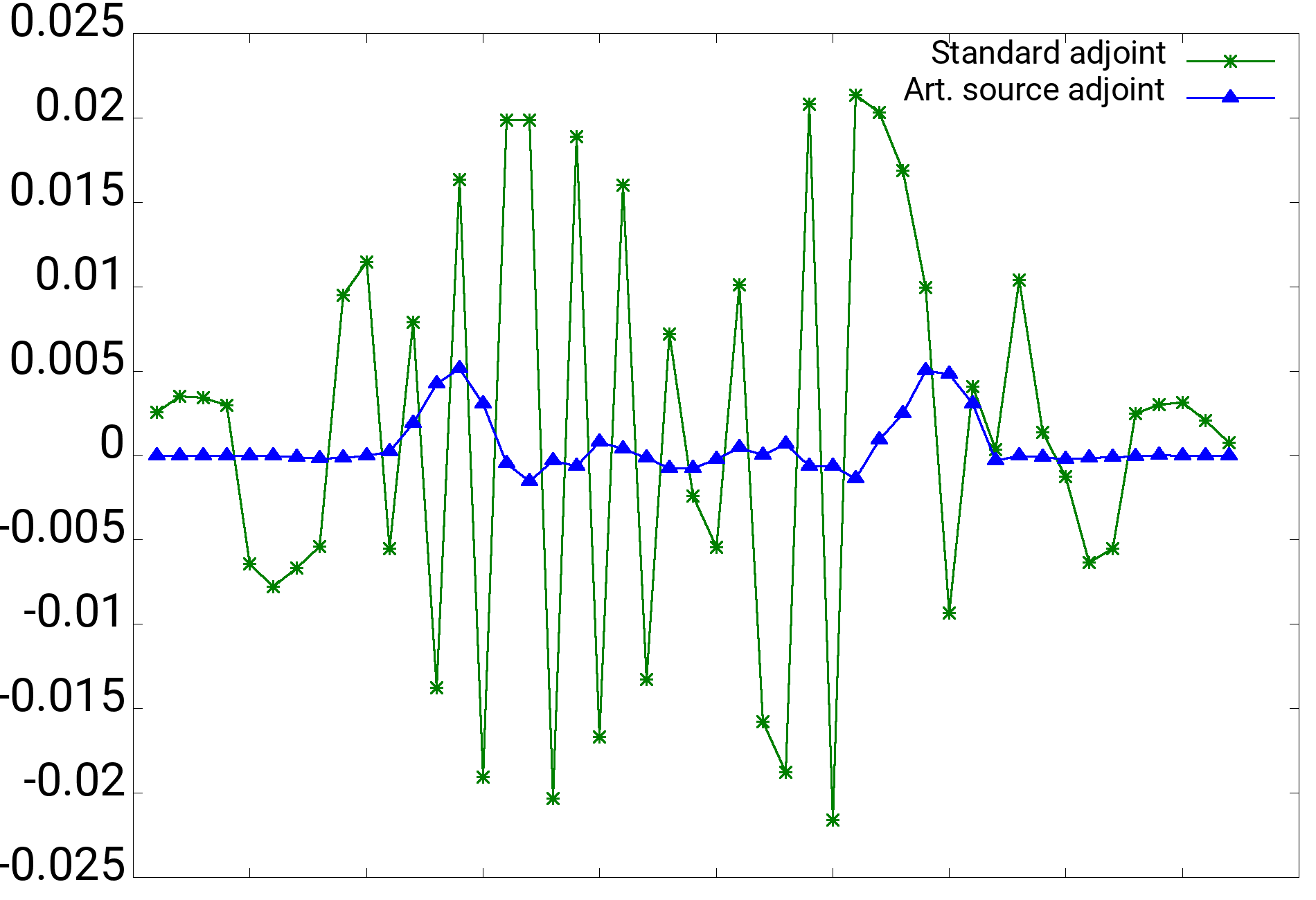}
  \caption{}
  \label{fig:PACosineAdjointSourceDiffLim4}
\end{subfigure}
\caption{Solid body rotation, cosine bell, a)-d) - along the curve exact vs standard adjoint vs art. source adjoint :
                                           a) solutions, art. source with limiter ~\cite{zalesak1979fully}, \cite{schar1996synchronous} 
                                           b) solutions, art. source with limiter ~\cite{zalesak1979fully}, \cite{harris2011flux} 
                                           c) errors, art. source with limiter ~\cite{zalesak1979fully}, \cite{schar1996synchronous}  
                                           d) errors, art. source with limiter ~\cite{zalesak1979fully}, \cite{harris2011flux}   
                                           }                                            
\label{fig:CosineBellCurve}                                    
\end{figure}

\begin{figure}
\begin{subfigure}{0.5\textwidth}
  \centering
  \includegraphics[width=\textwidth]{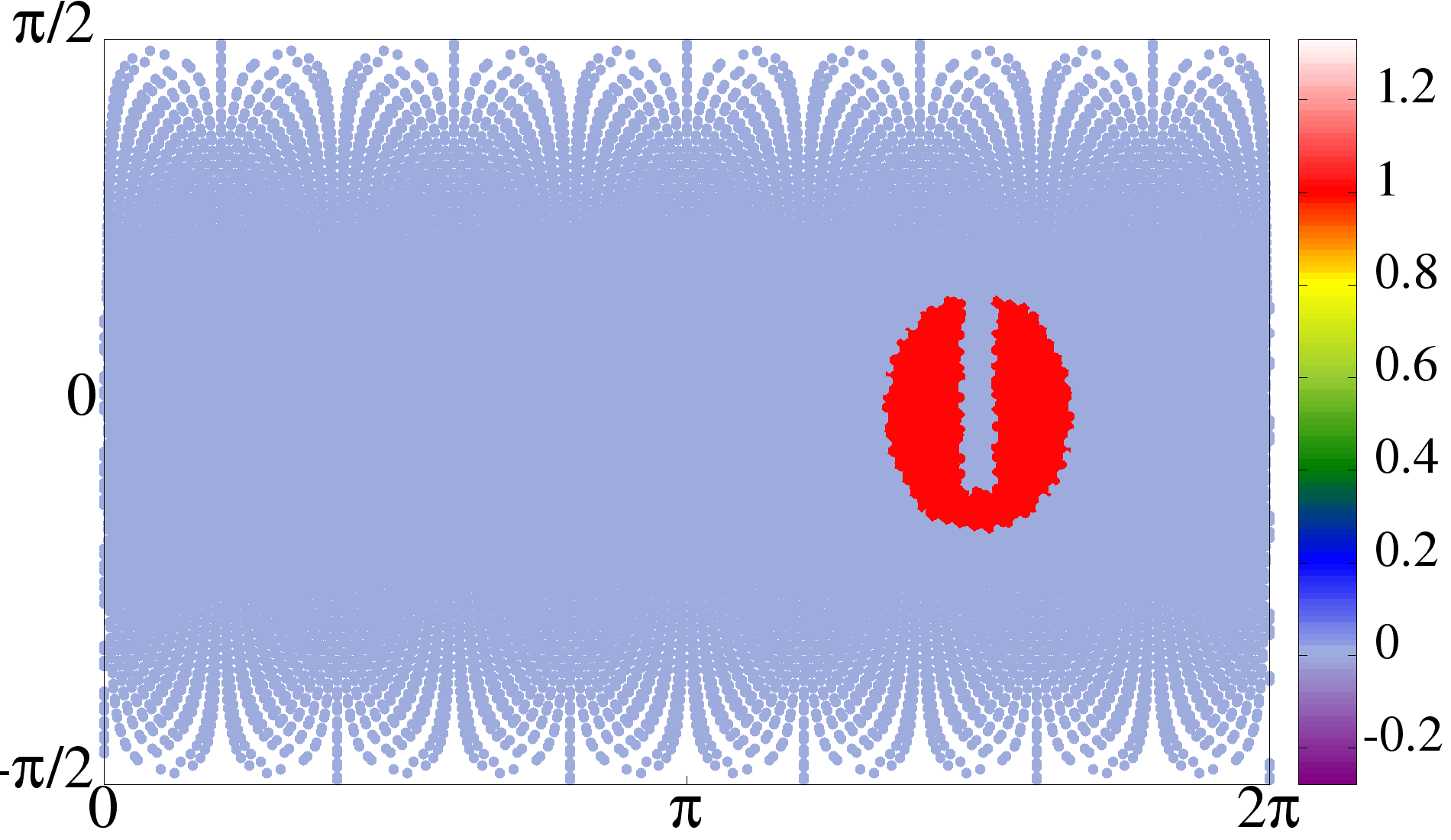}
  \caption{}
  \label{fig:PACylinderExact}
\end{subfigure}%
\begin{subfigure}{0.5\textwidth}
  \centering
  \includegraphics[width=\textwidth]{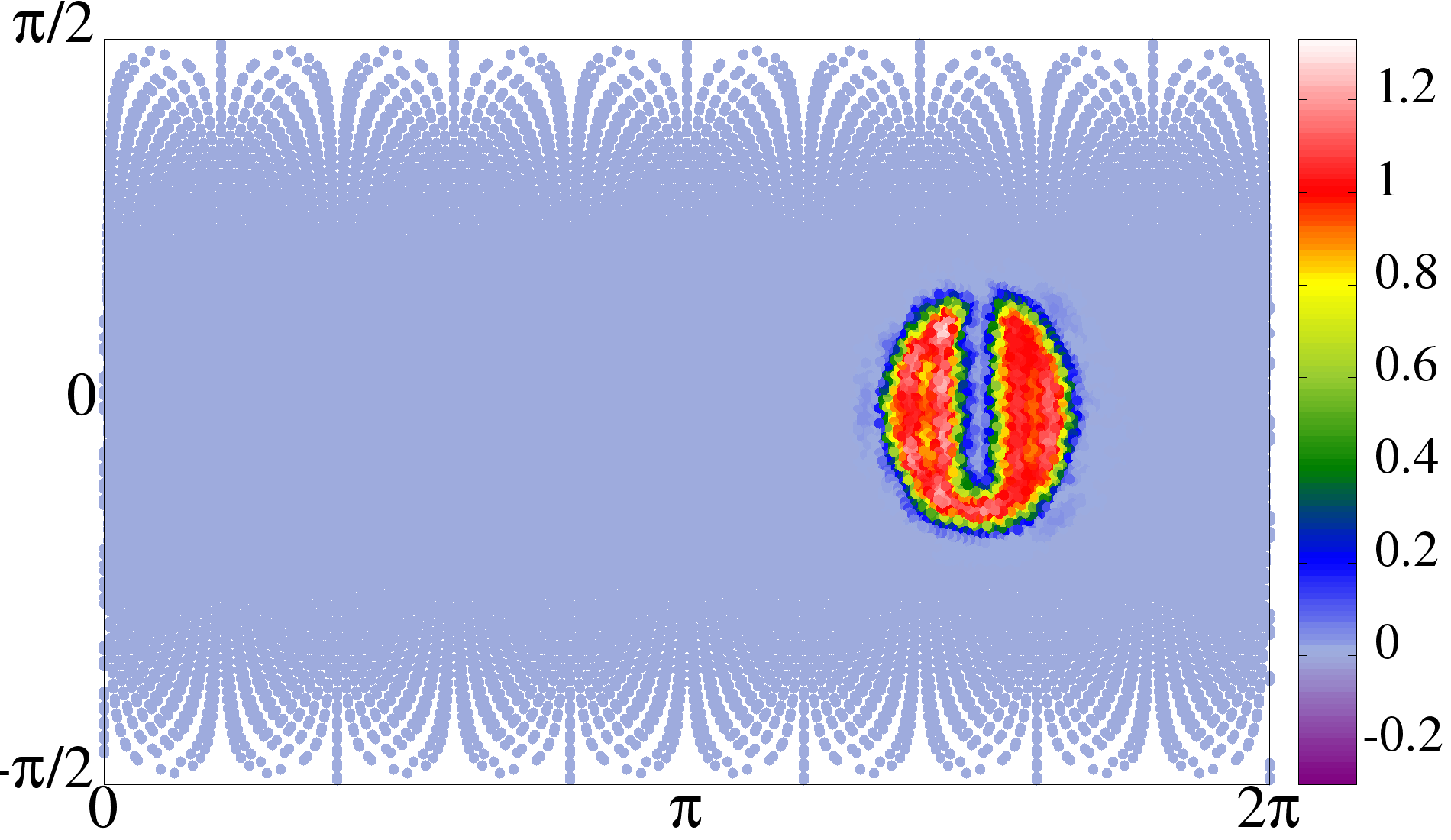}
  \caption{}
  \label{fig:PACylinderIcon}
\end{subfigure}
\begin{subfigure}{0.5\textwidth}
  \centering
  \includegraphics[width=\textwidth]{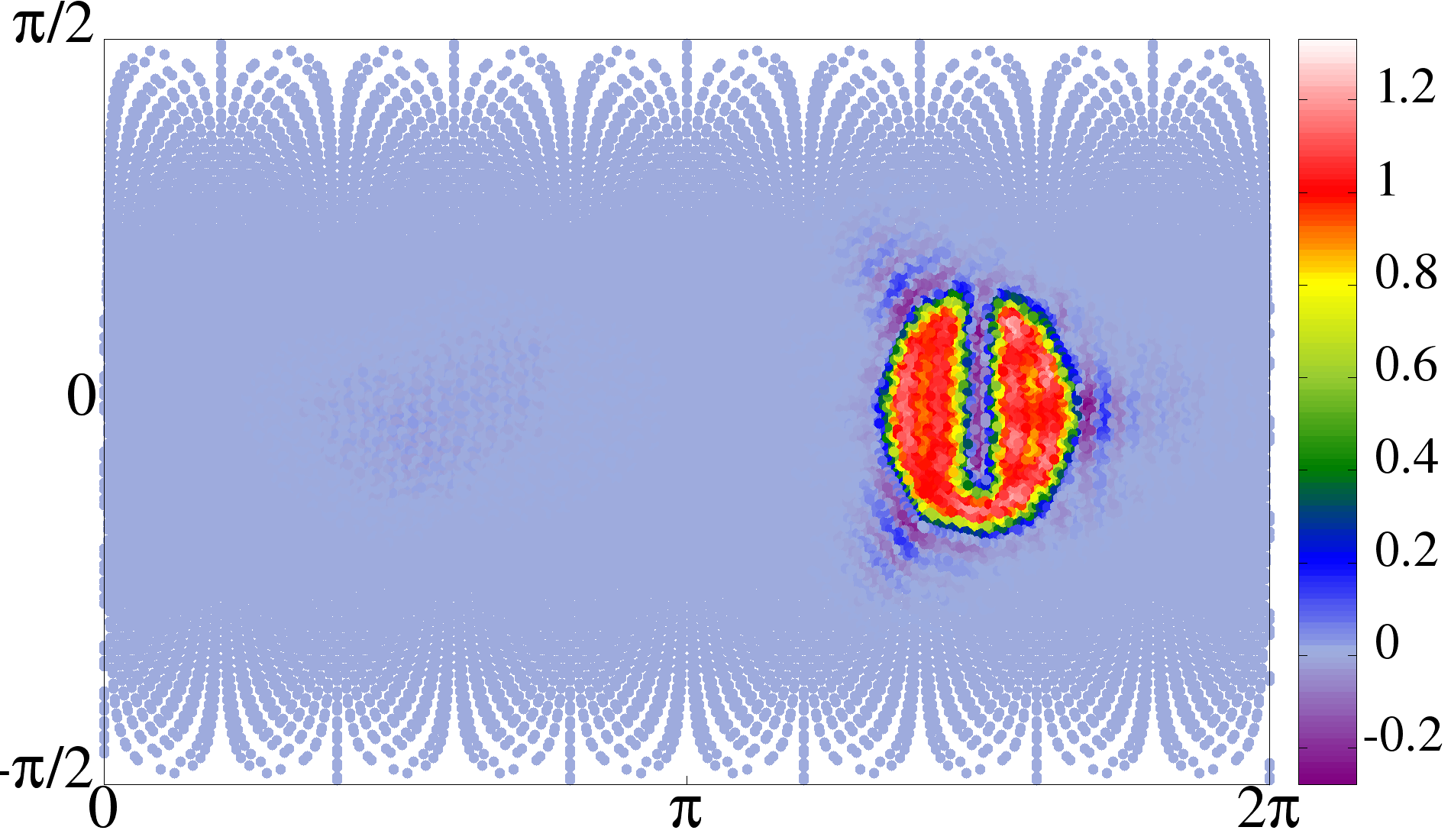}
  \caption{}
  \label{fig:PACylinderAdjoint}
\end{subfigure}%
\begin{subfigure}{.5\textwidth}
  \centering
  \includegraphics[width=\textwidth]{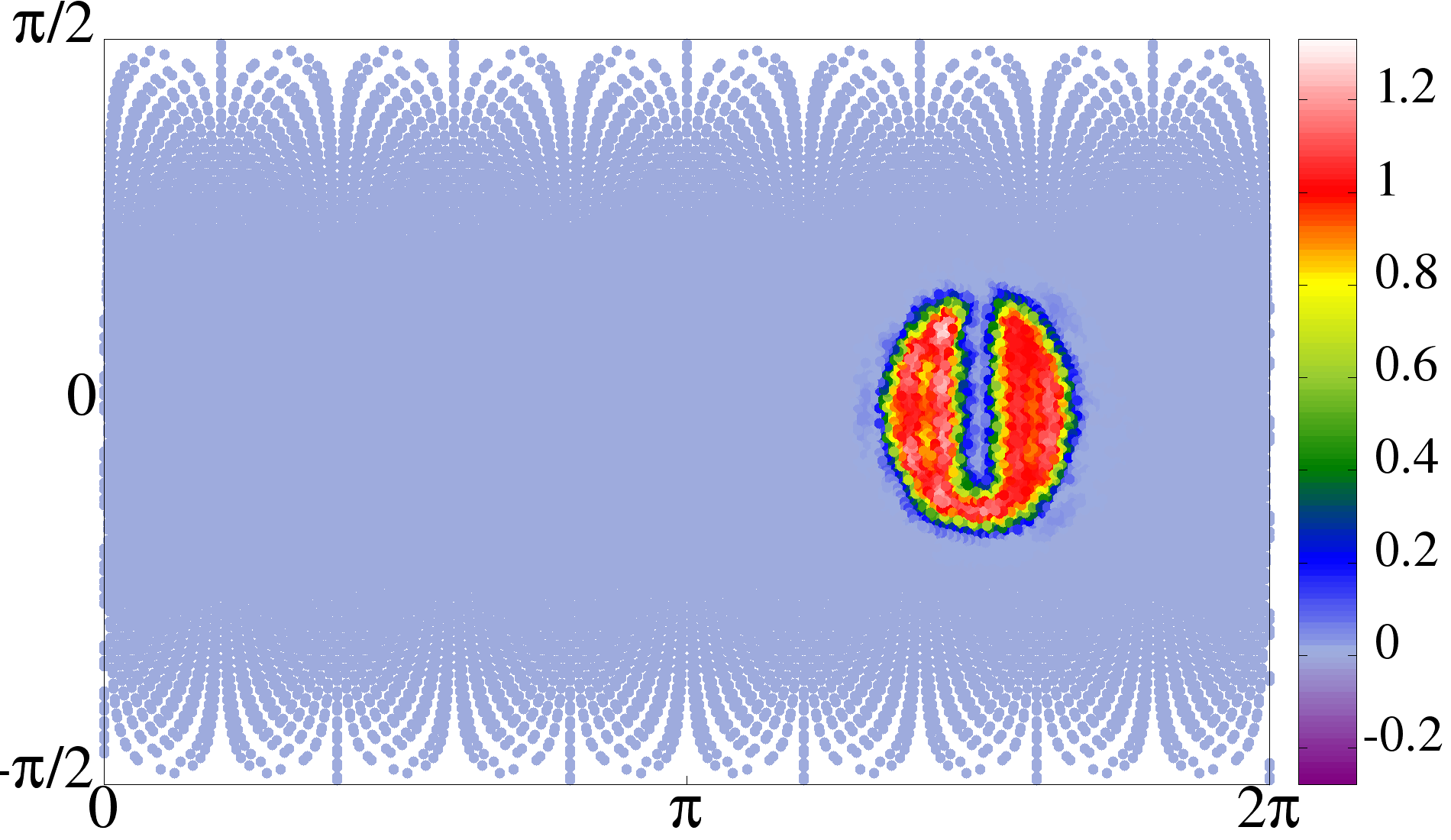}
  \caption{}
  \label{fig:PACylinderSource}
\end{subfigure}
\caption{Solid body rotation, slotted cylinder, a)-d) - contour plots with color bar: 
                                           a) exact solution 
                                           b) ICON-FFSL, limiter ~\cite{zalesak1979fully}, \cite{schar1996synchronous} 
                                           c) standard adjoint
                                           d) art. source adjoint, limiter ~\cite{zalesak1979fully}, \cite{schar1996synchronous}    
                                           }
\label{fig:SlottedCylinderContours}
\end{figure}

\begin{figure}[h]
\begin{subfigure}{.5\textwidth}
  \centering
  \includegraphics[width=\textwidth]{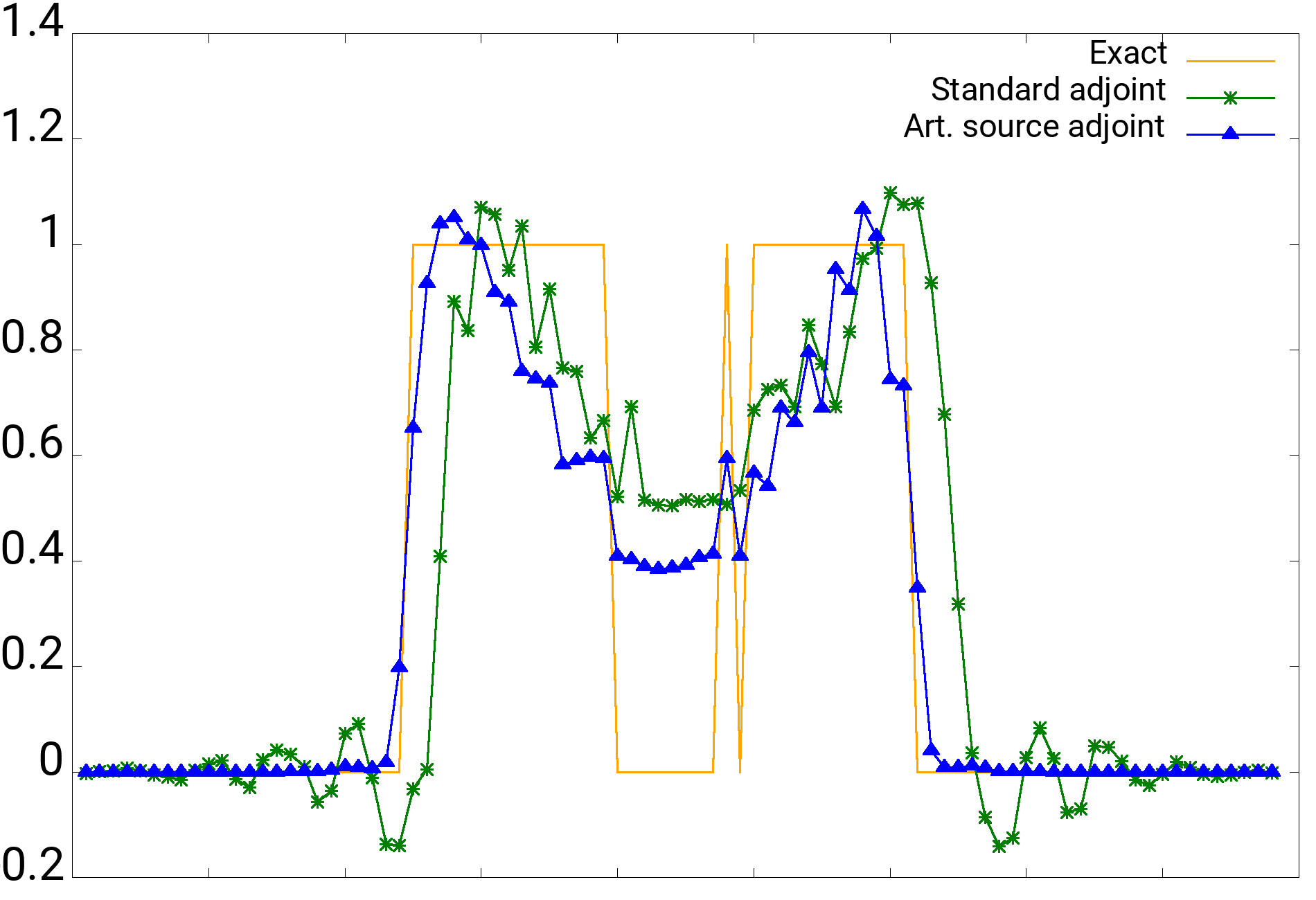}
  \caption{}
  \label{fig:PACylinderAdjointSourceLim3}
\end{subfigure}%
\begin{subfigure}{.5\textwidth}
  \centering
  \includegraphics[width=\textwidth]{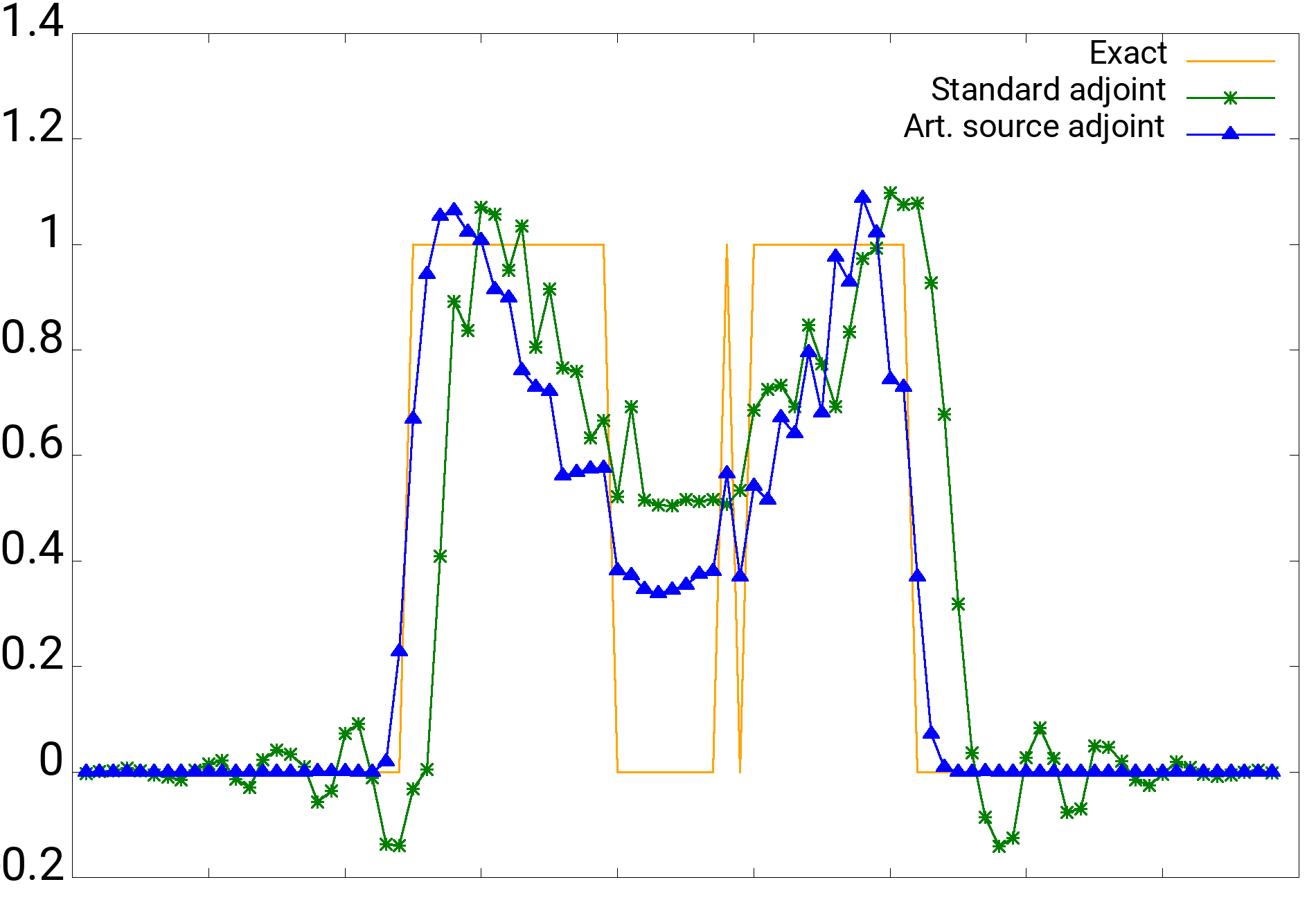}
  \caption{}
  \label{fig:PACylinderAdjointSourceLim4}
\end{subfigure}
\begin{subfigure}{.5\textwidth}
  \centering
  \includegraphics[width=\textwidth]{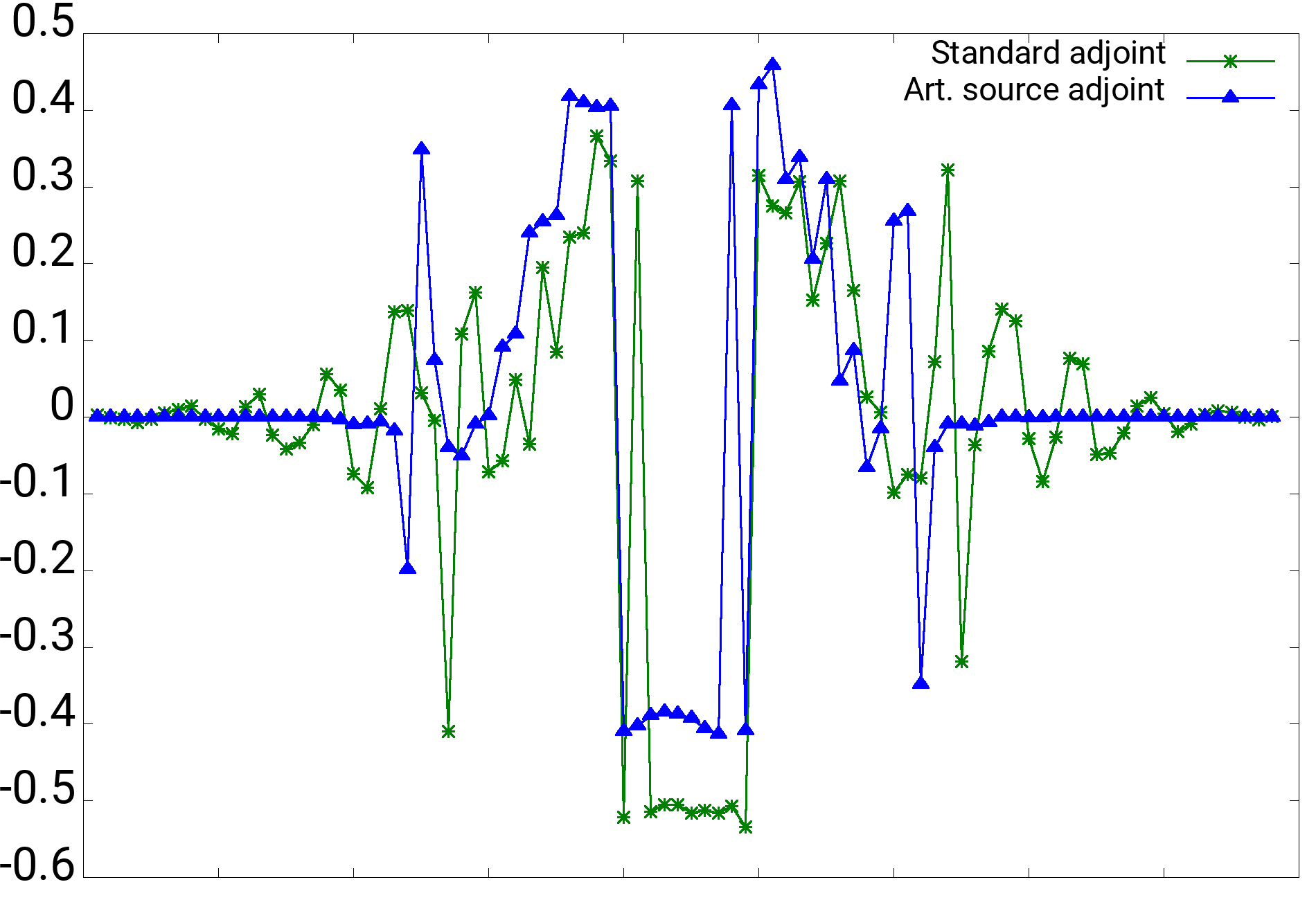}
  \caption{}
  \label{fig:PACylinderAdjointSourceDiffLim3}
\end{subfigure}%
\begin{subfigure}{.5\textwidth}
  \centering
  \includegraphics[width=\textwidth]{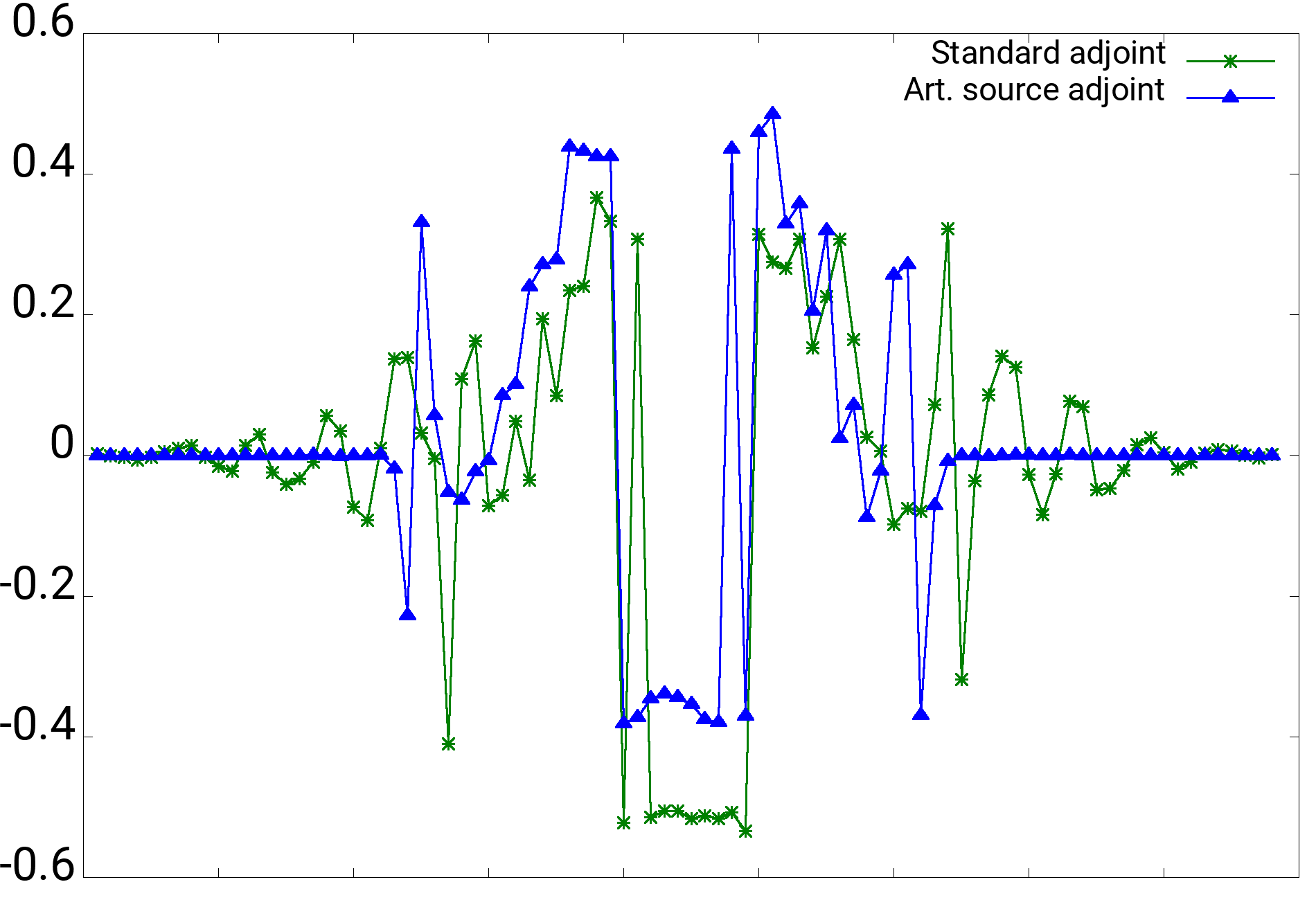}
  \caption{}
  \label{fig:PACylinderAdjointSourceDiffLim4}
\end{subfigure}
\caption{Solid body rotation, slotted cylinder, a)-d) - along the curve exact vs standard adjoint vs art. source adjoint:
                                                 a) solutions, art. source with limiter ~\cite{zalesak1979fully}, \cite{schar1996synchronous} 
                                                 b) solutions, art. source with limiter ~\cite{zalesak1979fully}, \cite{harris2011flux} 
                                                 c) errors, art. source with limiter ~ \cite{zalesak1979fully}, \cite{schar1996synchronous}  
                                                 d) errors, art. source with limiter ~ \cite{zalesak1979fully}, \cite{harris2011flux}
                                                 } 
\label{fig:SlottedCylinderCurve}
\end{figure}
%
\begin{figure}
\begin{subfigure}{.5\textwidth}
  \centering
  \includegraphics[width=\textwidth]{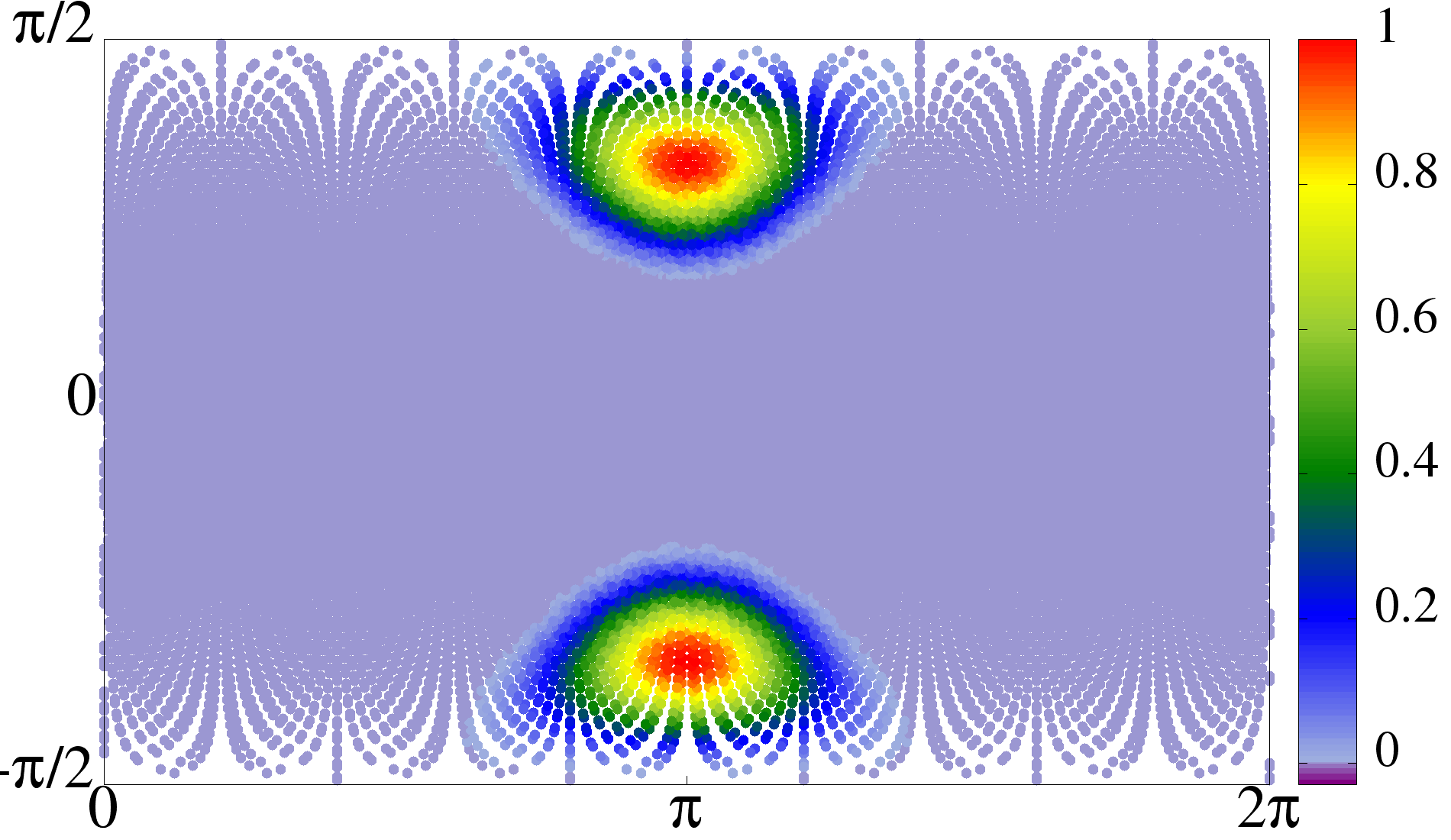}
  \caption{}
  \label{fig:DF1CosineExact}
\end{subfigure}%
\begin{subfigure}{.5\textwidth}
  \centering
  \includegraphics[width=\textwidth]{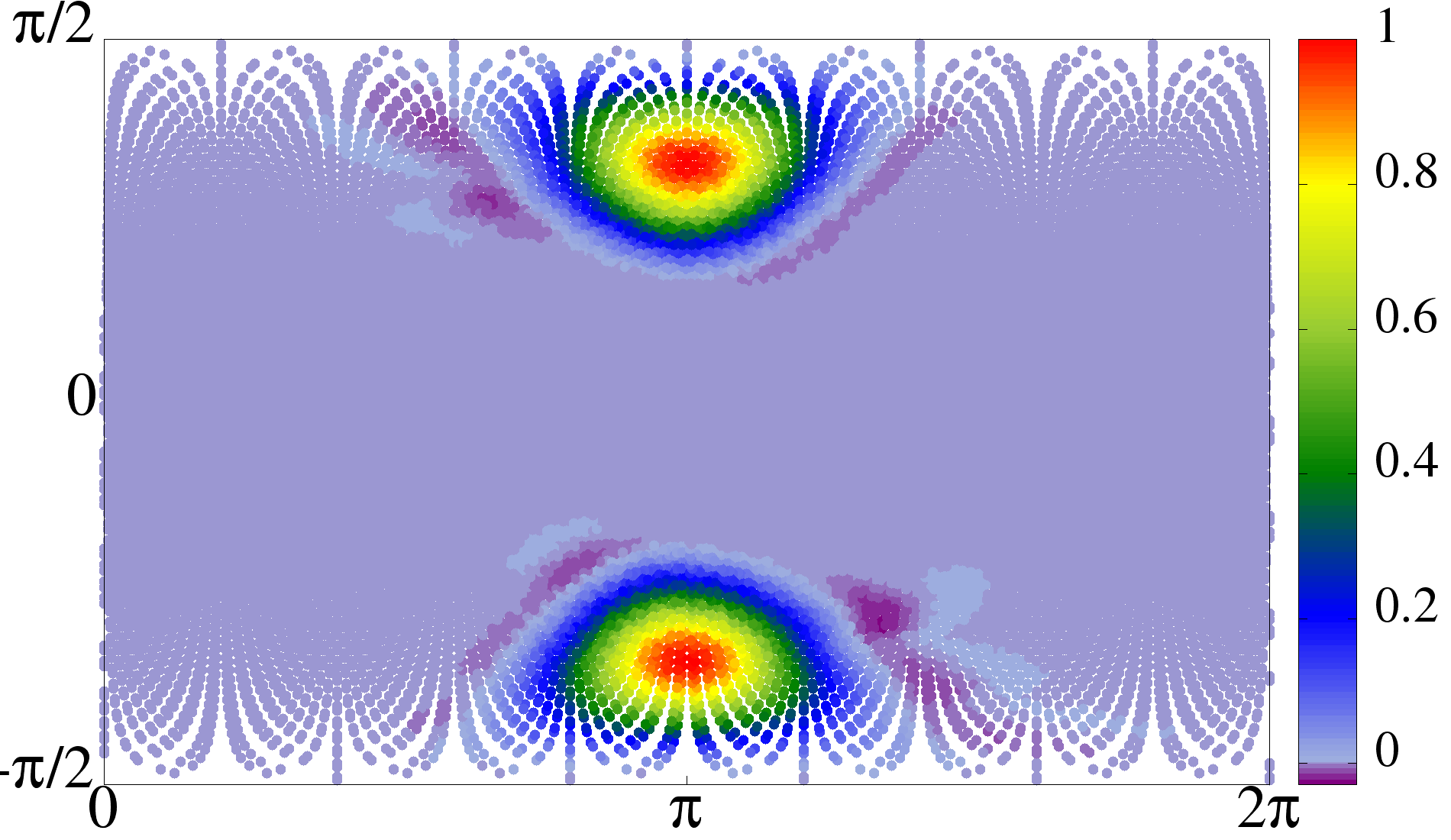}
  \caption{}
  \label{fig:DF1CosineAdjoint}
\end{subfigure}
\begin{subfigure}{.5\textwidth}
  \centering
  \includegraphics[width=\textwidth]{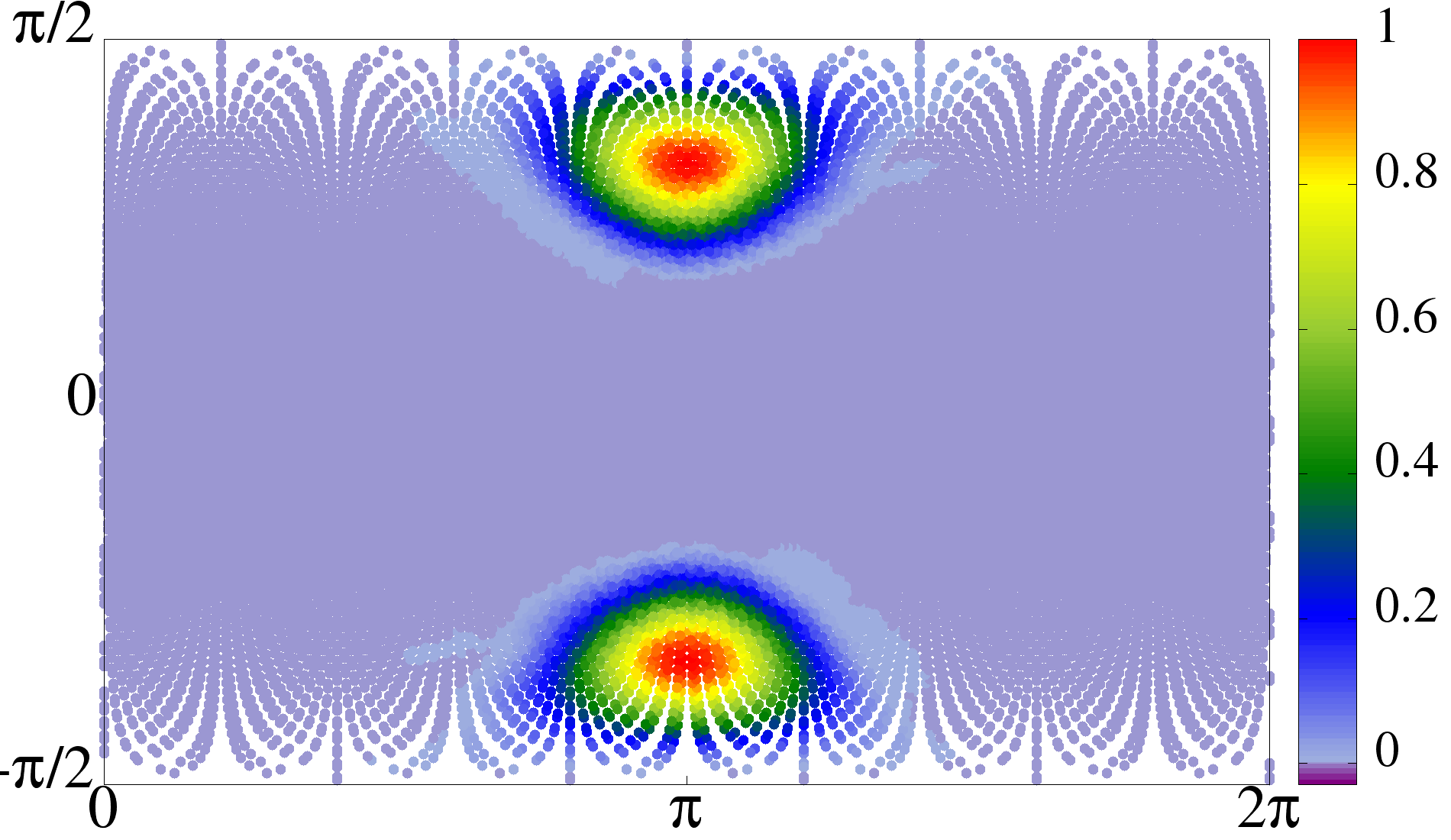}
  \caption{}
  \label{fig:DF1CosineSourceLim3}
\end{subfigure}%
\begin{subfigure}{.5\textwidth}
  \centering
  \includegraphics[width=\textwidth]{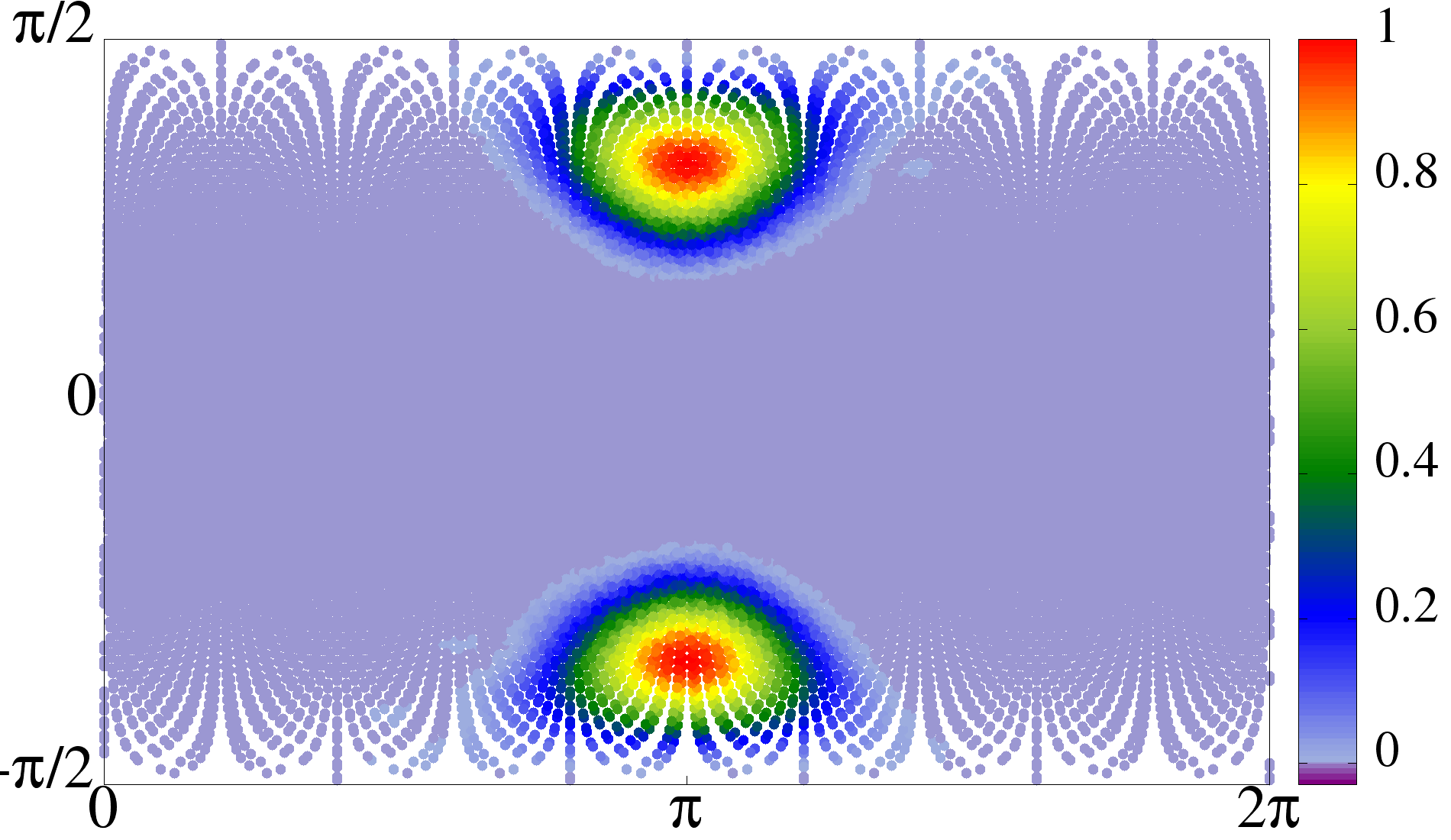} 
  \caption{}
  \label{fig:DF1CosineSourceLim4}
\end{subfigure}
\caption{Deformational flow, $\nabla \vec{v}=0$, cosine bells, a)-d) - contour plots with color bar: 
                                          a) exact solution
                                          b) standard adjoint, 
                                          c) art. source adjoint, limiter ~ \cite{zalesak1979fully}, \cite{schar1996synchronous}
                                          d) art. source adjoint, limiter ~\cite{zalesak1979fully}, \cite{harris2011flux}
                                          }
\label{fig:DefFl1:2CosBellContour}
\end{figure}
%

\begin{figure}[h]
\begin{subfigure}{.5\textwidth}
  \centering
  \includegraphics[width=\textwidth]{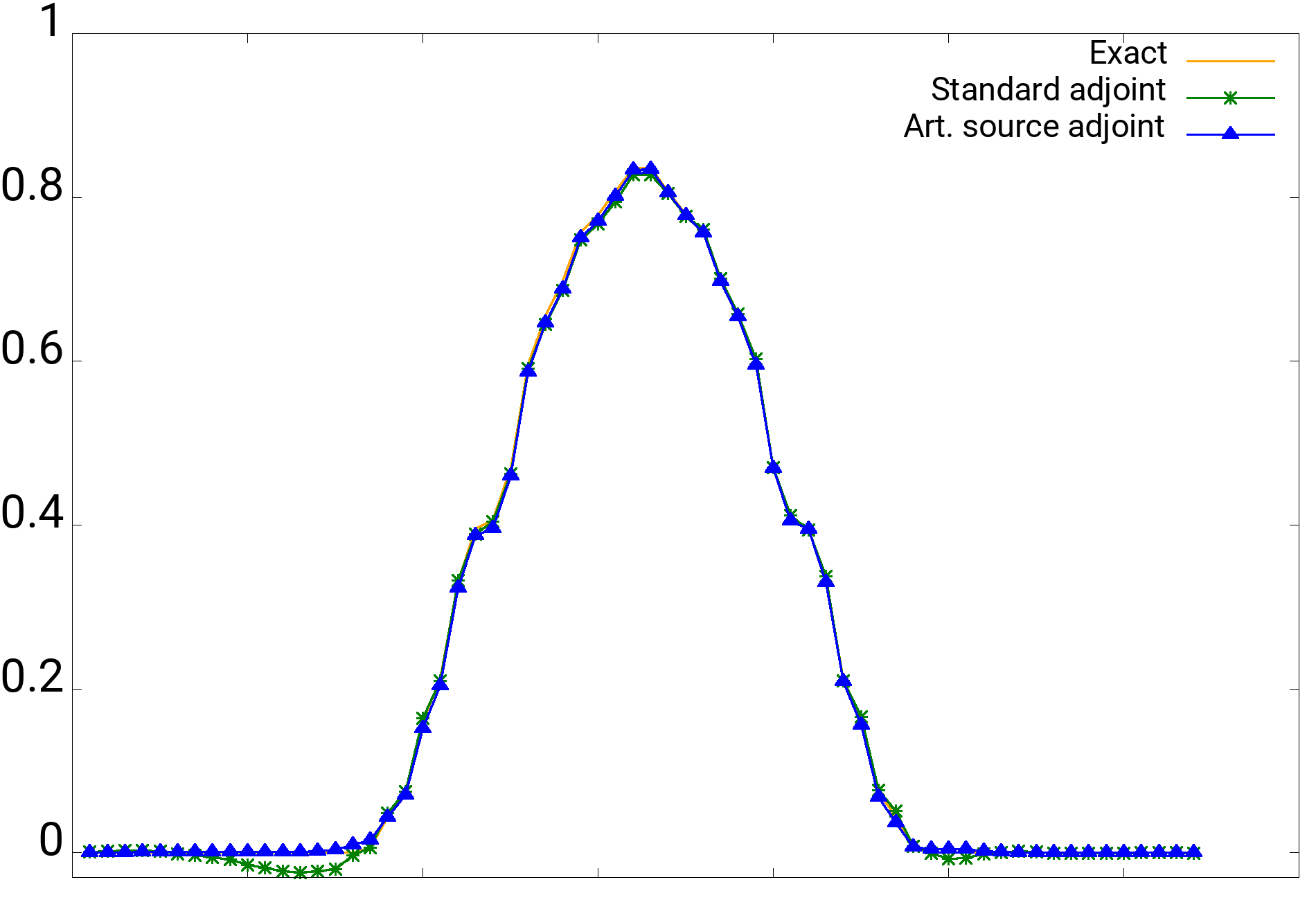}
  \caption{}
  \label{fig:DF1CosineAdjointSourceLim3}
\end{subfigure}%
\begin{subfigure}{.5\textwidth}
  \centering
  \includegraphics[width=\textwidth]{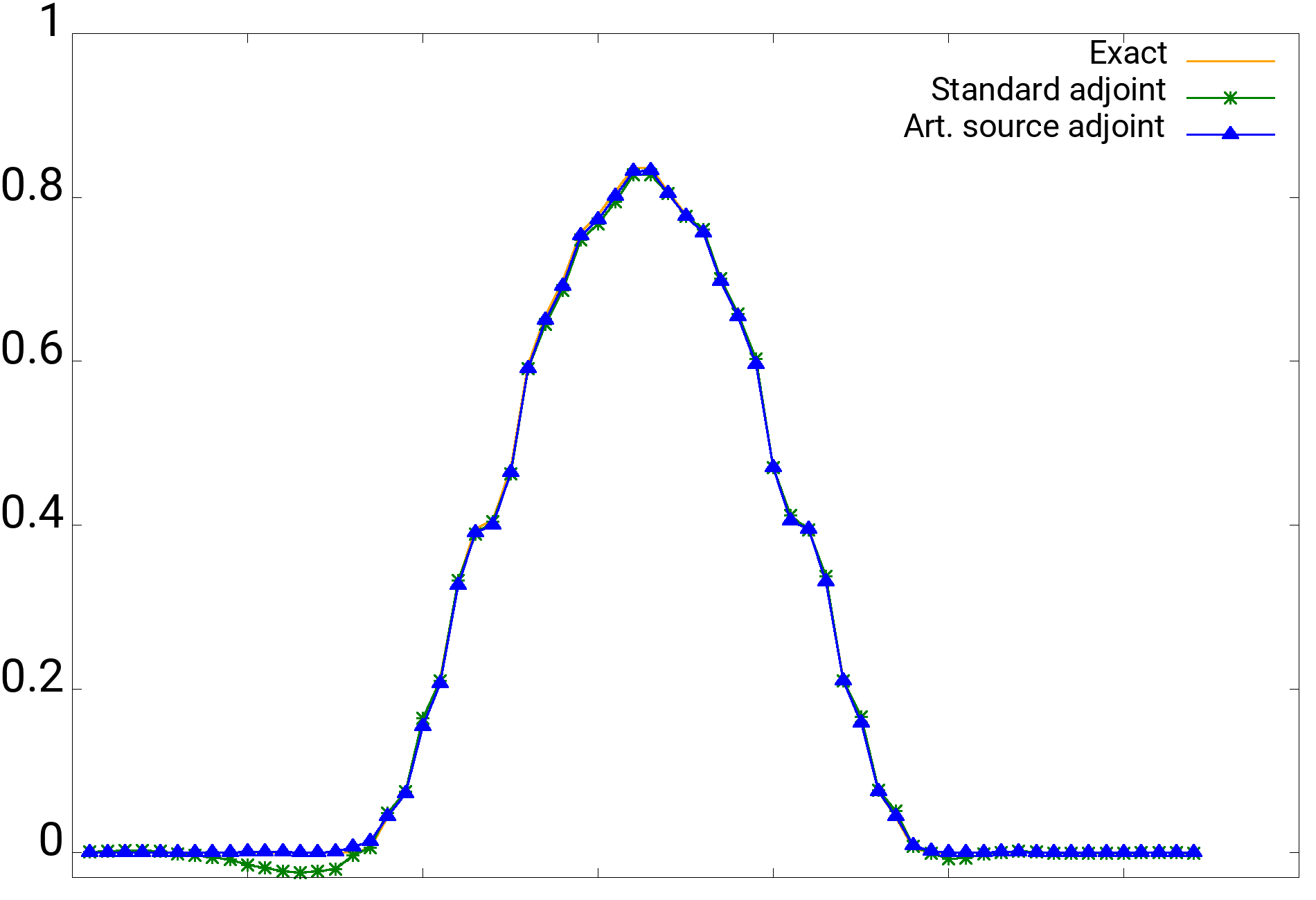}
  \caption{}
  \label{fig:DF1CosineAdjointSourceLim4}
\end{subfigure}
\begin{subfigure}{.5\textwidth}
  \centering
  \includegraphics[width=\textwidth]{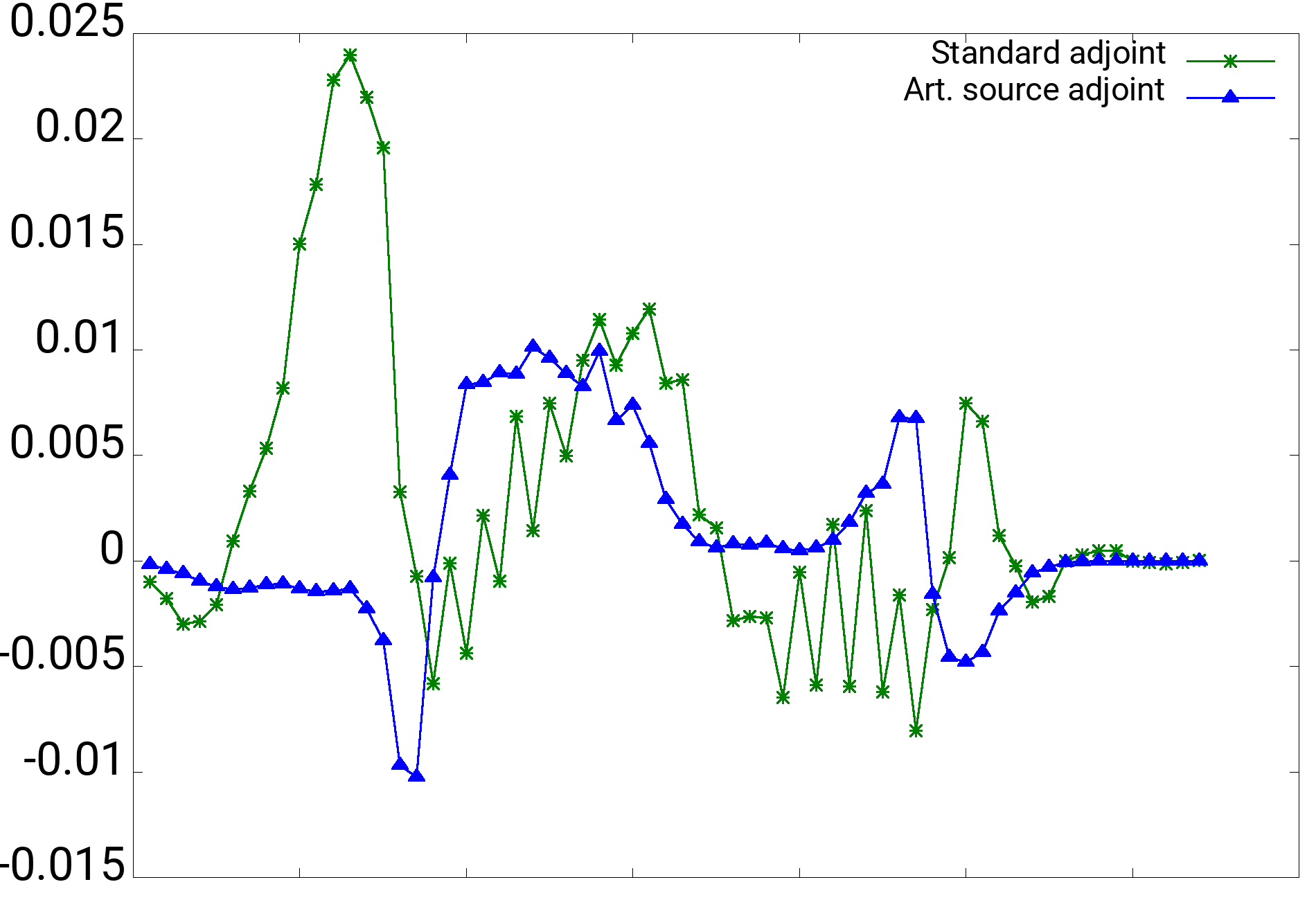}
  \caption{}
  \label{fig:DF1CosineAdjointSourceDiffLim3}
\end{subfigure}%
\begin{subfigure}{.5\textwidth}
  \centering
  \includegraphics[width=\textwidth]{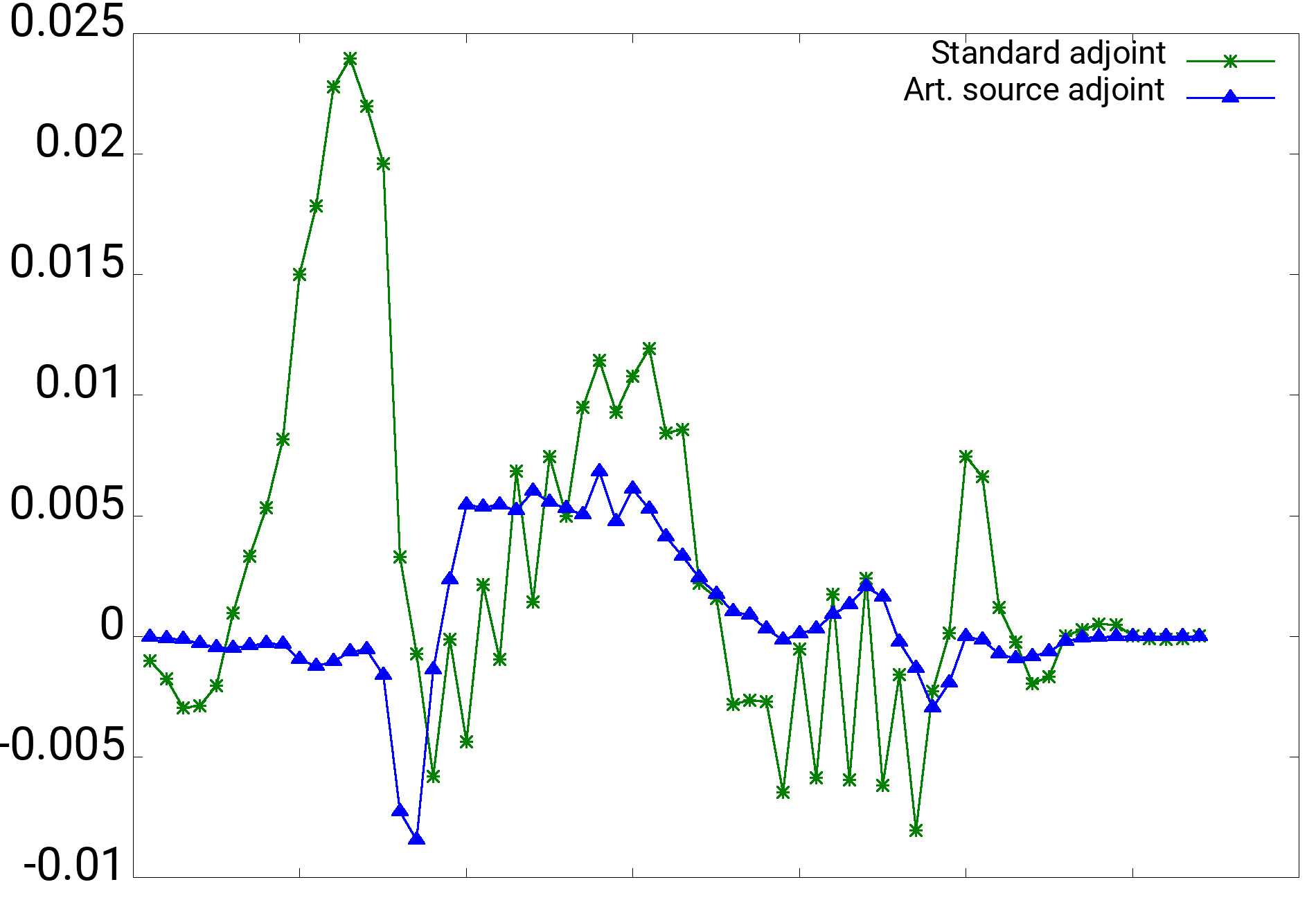}
  \caption{}
  \label{fig:DF1CosineAdjointSourceLim4}
\end{subfigure}
\caption{Deformational flow, $\nabla \vec{v}=0$, cosine bells, a)-d) - along the curve  exact vs standard adjoint vs art. source adjoint:
                                                                       a) solutions, art. source with limiter ~ \cite{zalesak1979fully}, \cite{schar1996synchronous} 
                                                                       b) solutions, art. source with limiter ~ \cite{zalesak1979fully}, \cite{harris2011flux} 
                                                                       c) errors, art. source with limiter ~ \cite{zalesak1979fully}, \cite{schar1996synchronous}  
                                                                       d) errors, art. source with limiter ~ \cite{zalesak1979fully}, \cite{harris2011flux}   
                                                                       } 
\label{fig:DefFl1:2CosBellCurve}
\end{figure}

\begin{figure}
\begin{subfigure}{.5\textwidth}
  \centering
  \includegraphics[width=\textwidth]{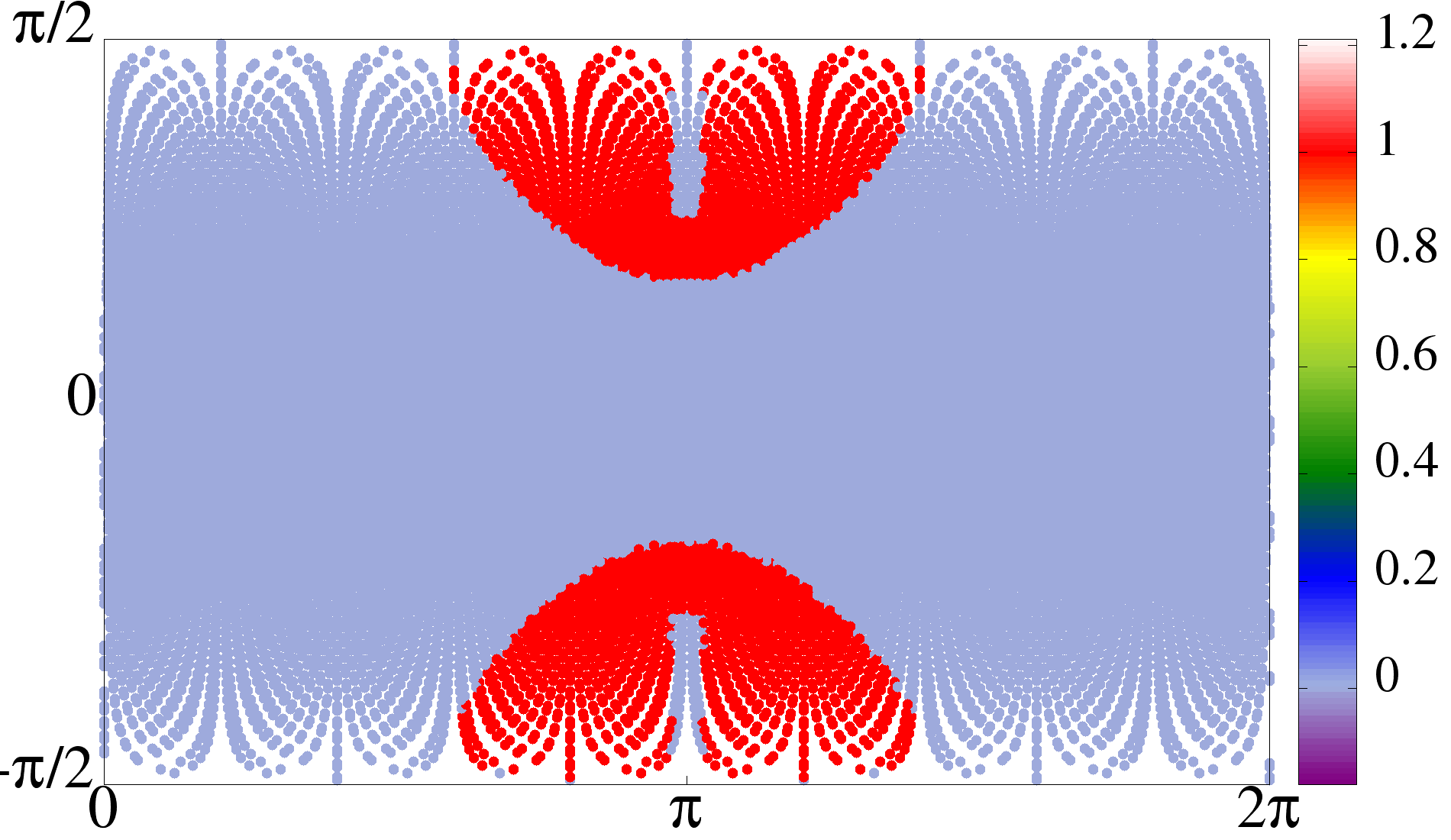}
  \caption{}
  \label{fig:DF1CylinderExact}
\end{subfigure}%
\begin{subfigure}{.5\textwidth}
  \centering
  \includegraphics[width=\textwidth]{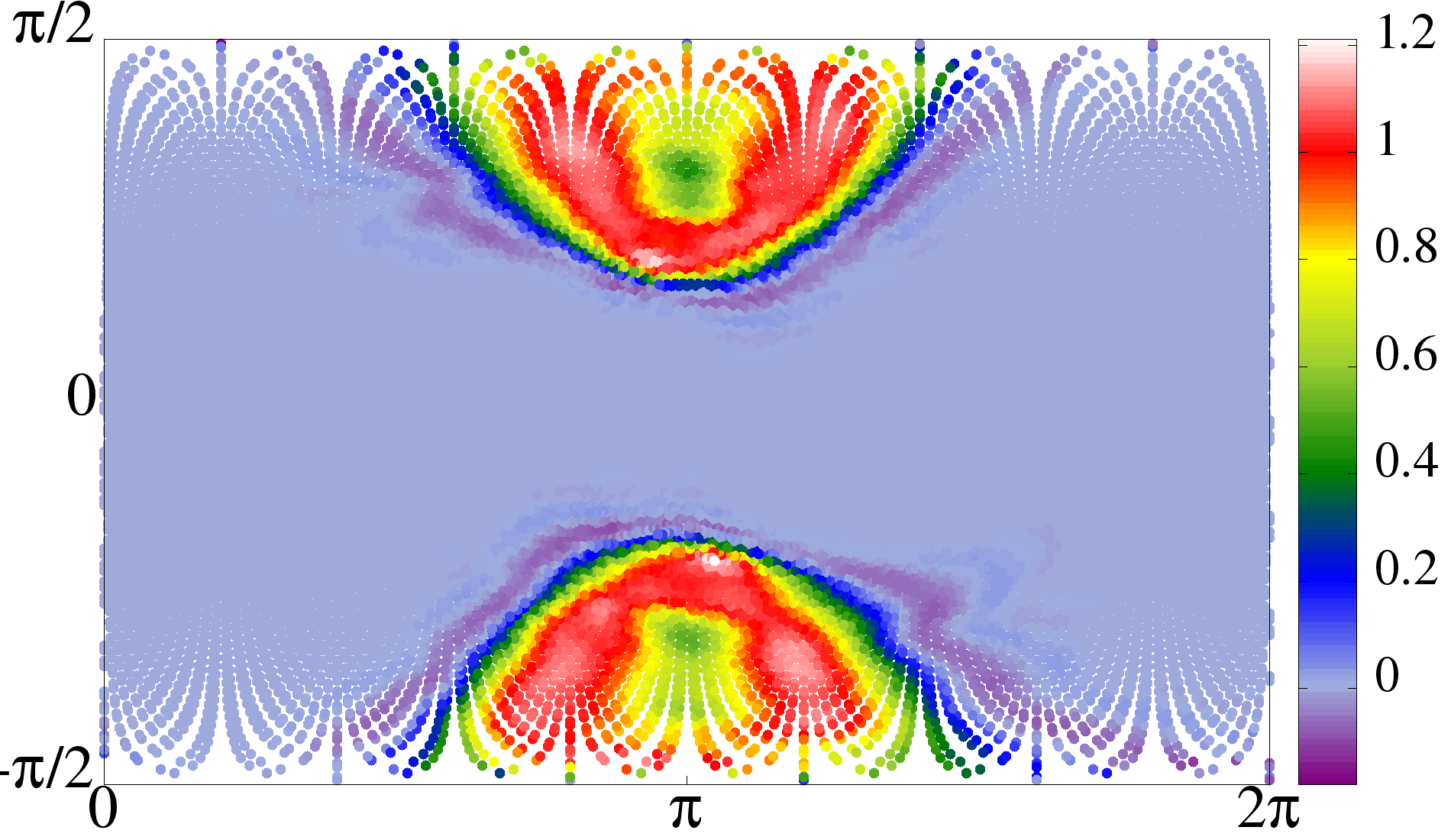}
  \caption{}
  \label{fig:DF1CylinderAdjoint}
\end{subfigure}
\begin{subfigure}{.5\textwidth}
  \centering
  \includegraphics[width=\textwidth]{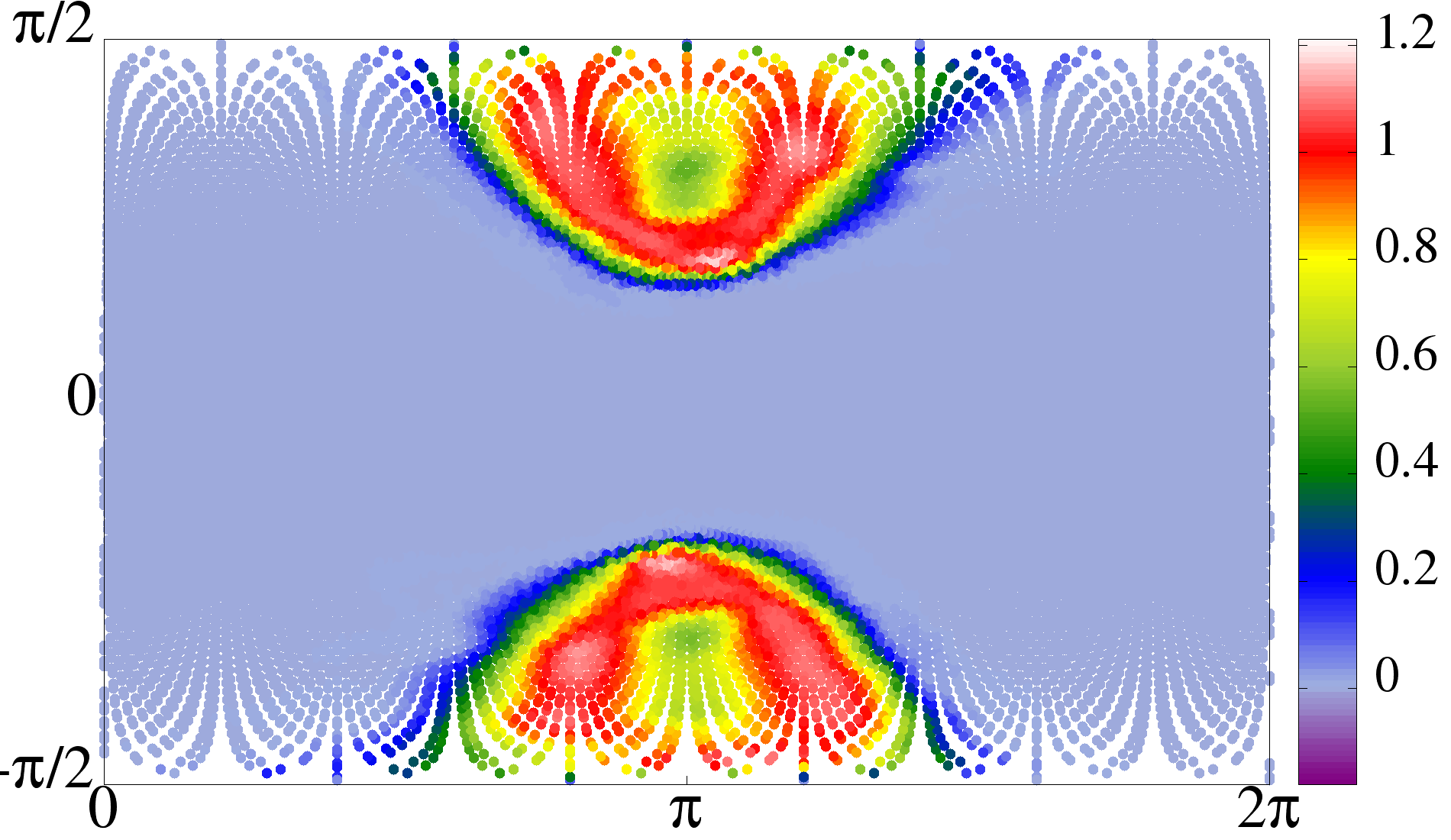}
  \caption{}
  \label{fig:DF1CylinderSourceLim3}
\end{subfigure}%
\begin{subfigure}{.5\textwidth}
  \centering
  \includegraphics[width=\textwidth]{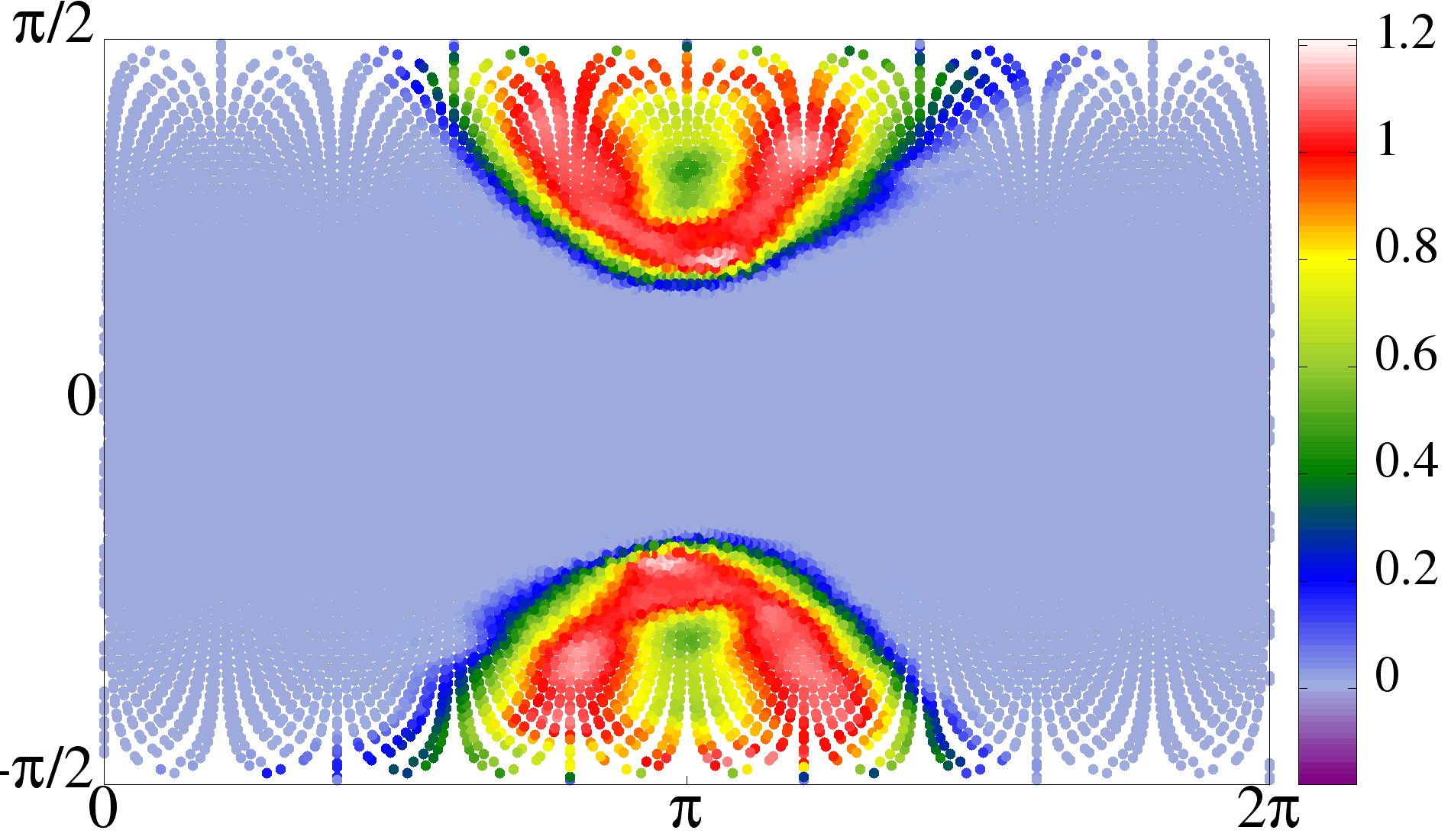}
  \caption{}
  \label{fig:DF1CylinderLim4}
\end{subfigure}
\caption{Deformational flow, $\nabla \vec{v}=0$, slotted cylinders, a)-d) - contour plots with color bar: 
                                          a) exact solution
                                          b) standard adjoint, 
                                          c) art. source adjoint, limiter ~ \cite{zalesak1979fully}, \cite{schar1996synchronous}
                                          d) art. source adjoint, limiter ~\cite{zalesak1979fully}, \cite{harris2011flux}     
                                          }
\label{fig:DefFl1:2SlottedCylinderContour}
\end{figure}

\begin{figure}[h]
\begin{subfigure}{.5\textwidth}
  \centering
  \includegraphics[width=\textwidth]{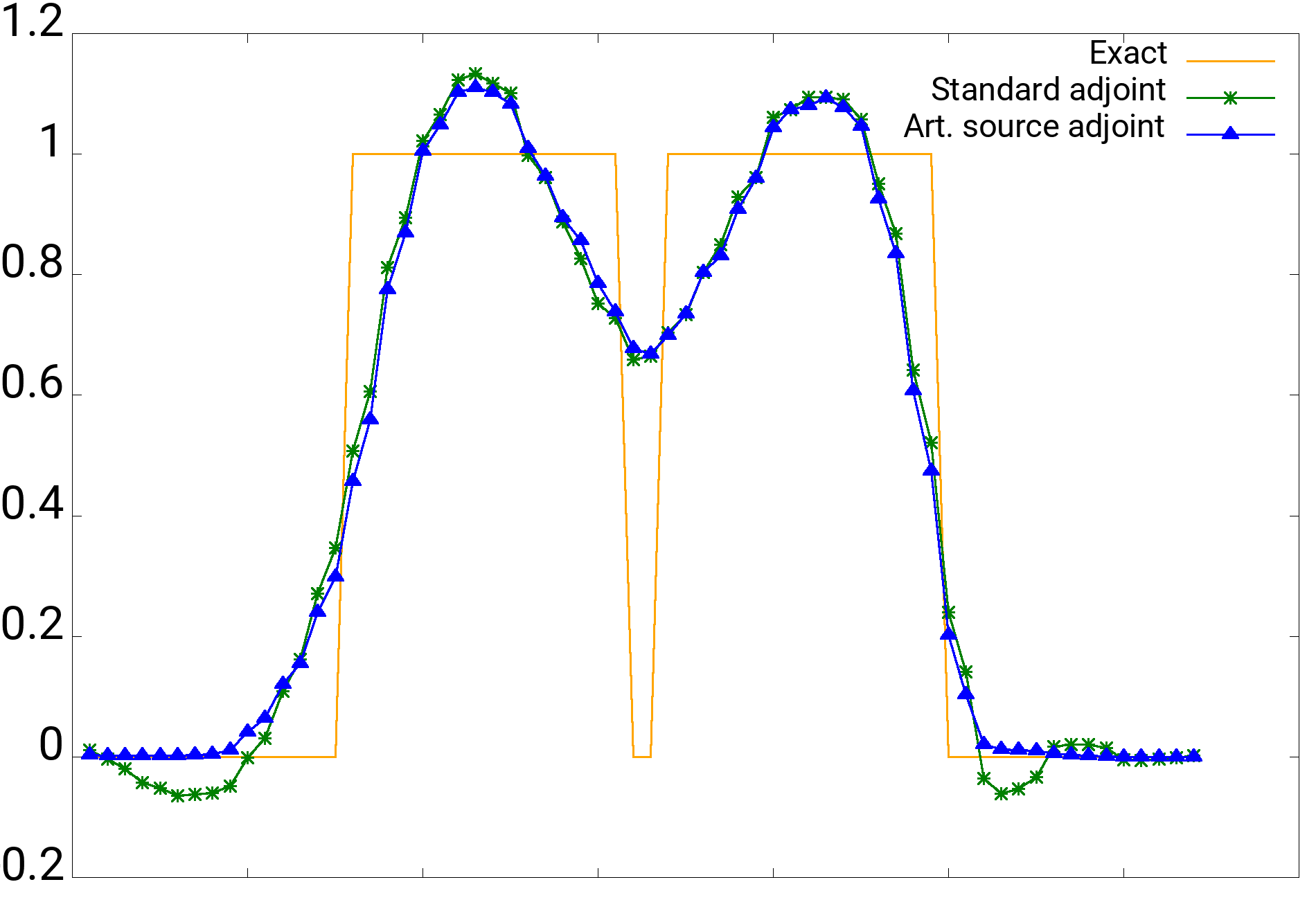}
  \caption{}
  \label{fig:DF1CylinderAdjoinrSourceLim3}
\end{subfigure}%
\begin{subfigure}{.5\textwidth}
  \centering
  \includegraphics[width=\textwidth]{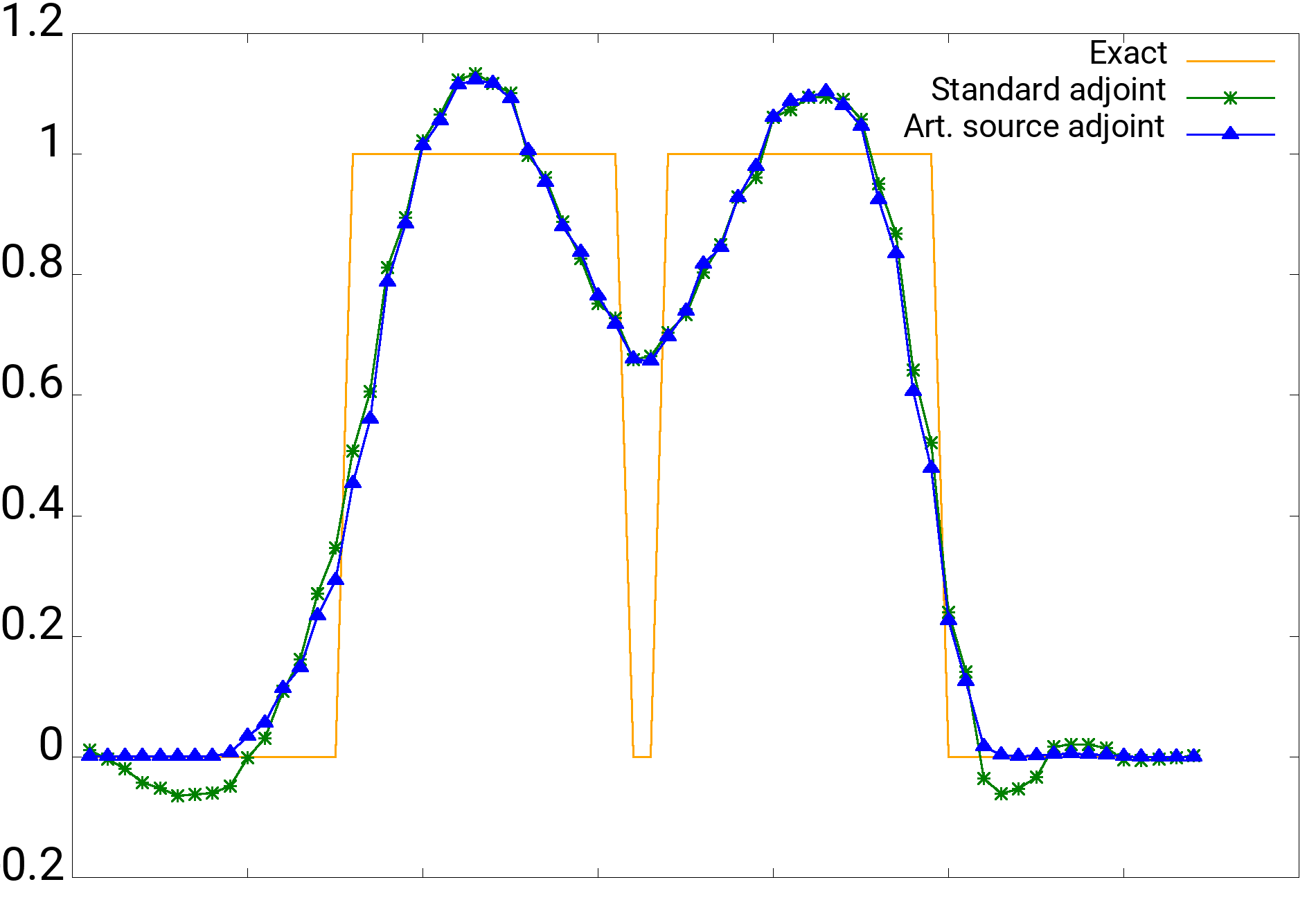}
  \caption{}
  \label{fig:DF1CylinderAdjointSourceLim4}
\end{subfigure}
\begin{subfigure}{.5\textwidth}
  \centering
  \includegraphics[width=\textwidth]{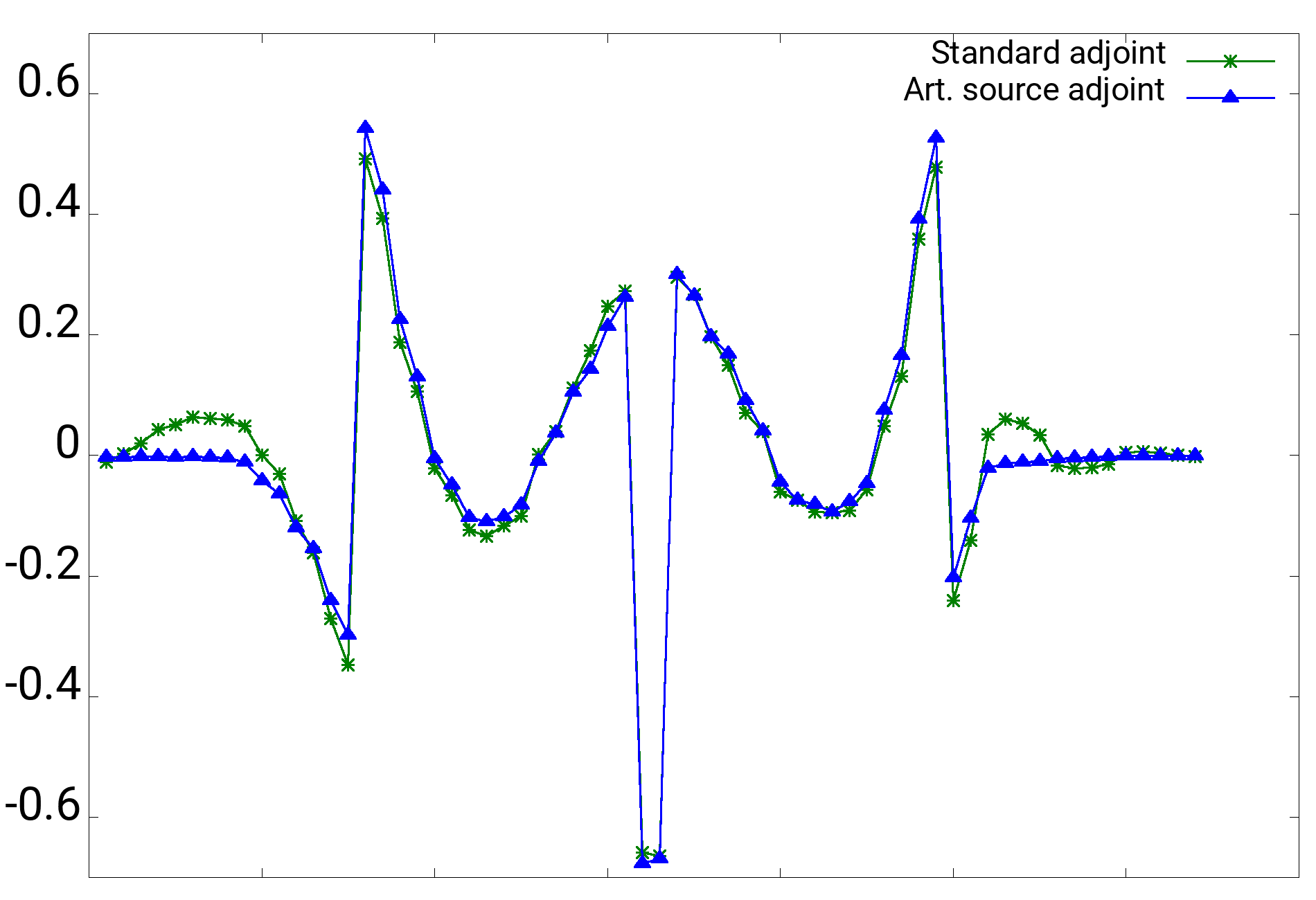}
  \caption{}
  \label{fig:DF1CylinderAdjointSourceDiffLim3}
\end{subfigure}%
\begin{subfigure}{.5\textwidth}
  \centering
  \includegraphics[width=\textwidth]{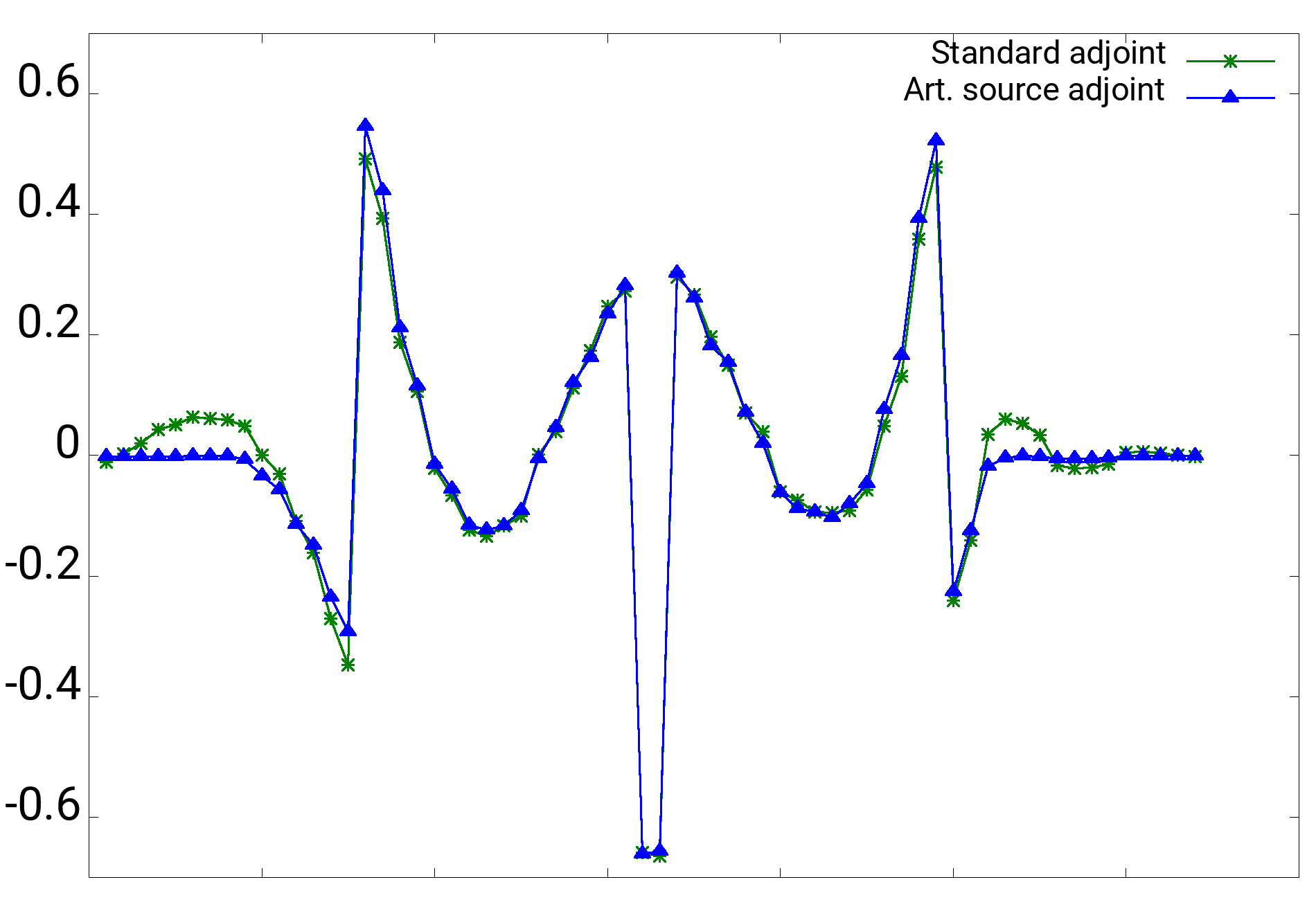}
  \caption{}
  \label{fig:DF1CylinerAdjointSourceDiffLim4}
\end{subfigure}
\caption{Deformational flow, $\nabla \vec{v}=0$, slotted cylinders, a)-d) - along the curve exact vs standard adjoint vs art. source adjoint: 
                                                                       a) solutions, art. source with limiter ~ \cite{zalesak1979fully}, \cite{schar1996synchronous} 
                                                                       b) solutions, art. source with limiter ~ \cite{zalesak1979fully}, \cite{harris2011flux} 
                                                                       c) errors, art. source with limiter ~ \cite{zalesak1979fully}, \cite{schar1996synchronous}  
                                                                       d) errors, art. source with limiter ~ \cite{zalesak1979fully}, \cite{harris2011flux}   
                                                                       } 
\label{fig:DefFl1:2SlottedCylinderCurve}
\end{figure}

\begin{figure}
\begin{subfigure}{.5\textwidth}
  \centering
  \includegraphics[width=\textwidth]{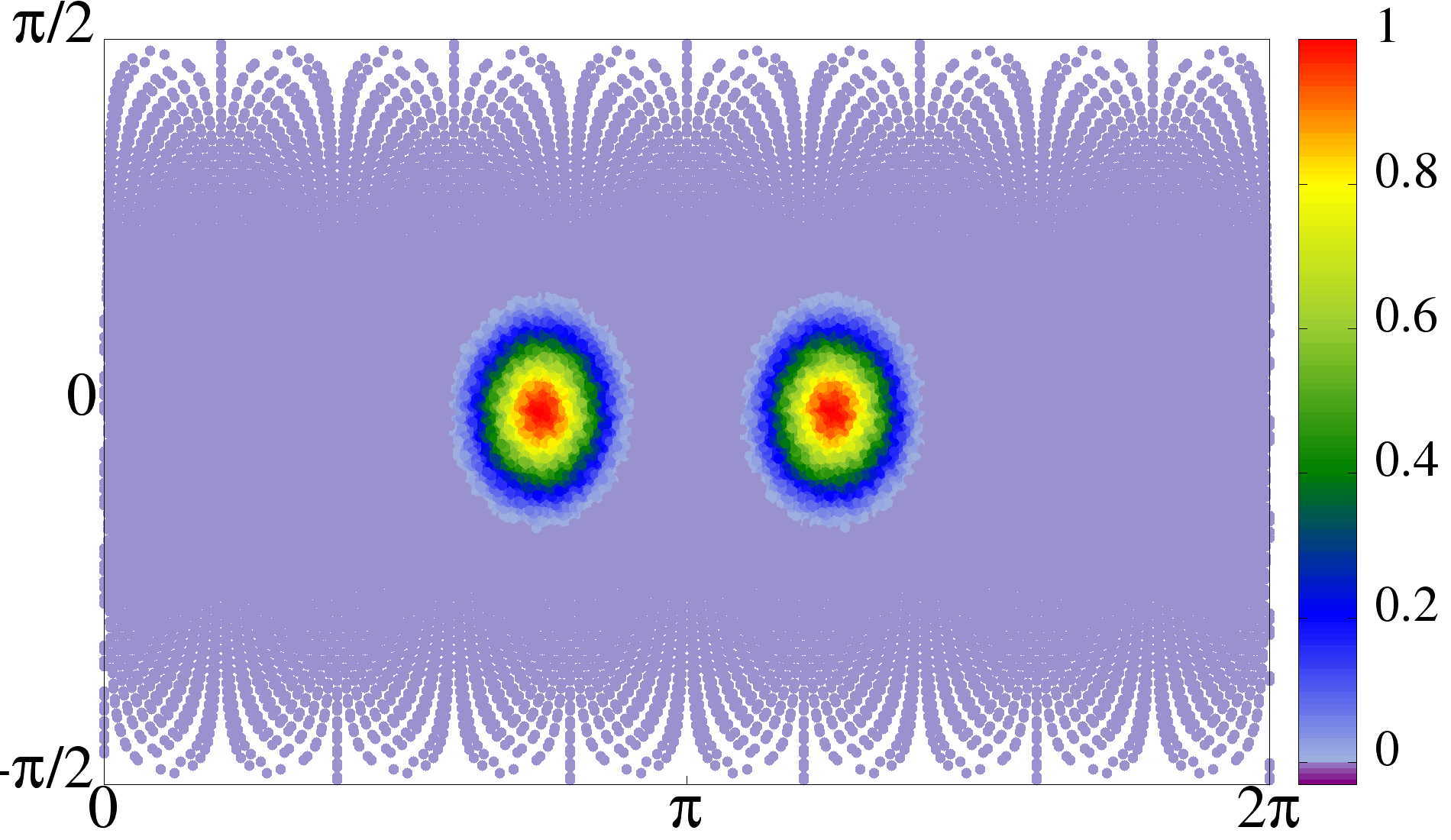}
  \caption{}
  \label{fig:DF3CosineExact}
\end{subfigure}%
\begin{subfigure}{.5\textwidth}
  \centering
  \includegraphics[width=\textwidth]{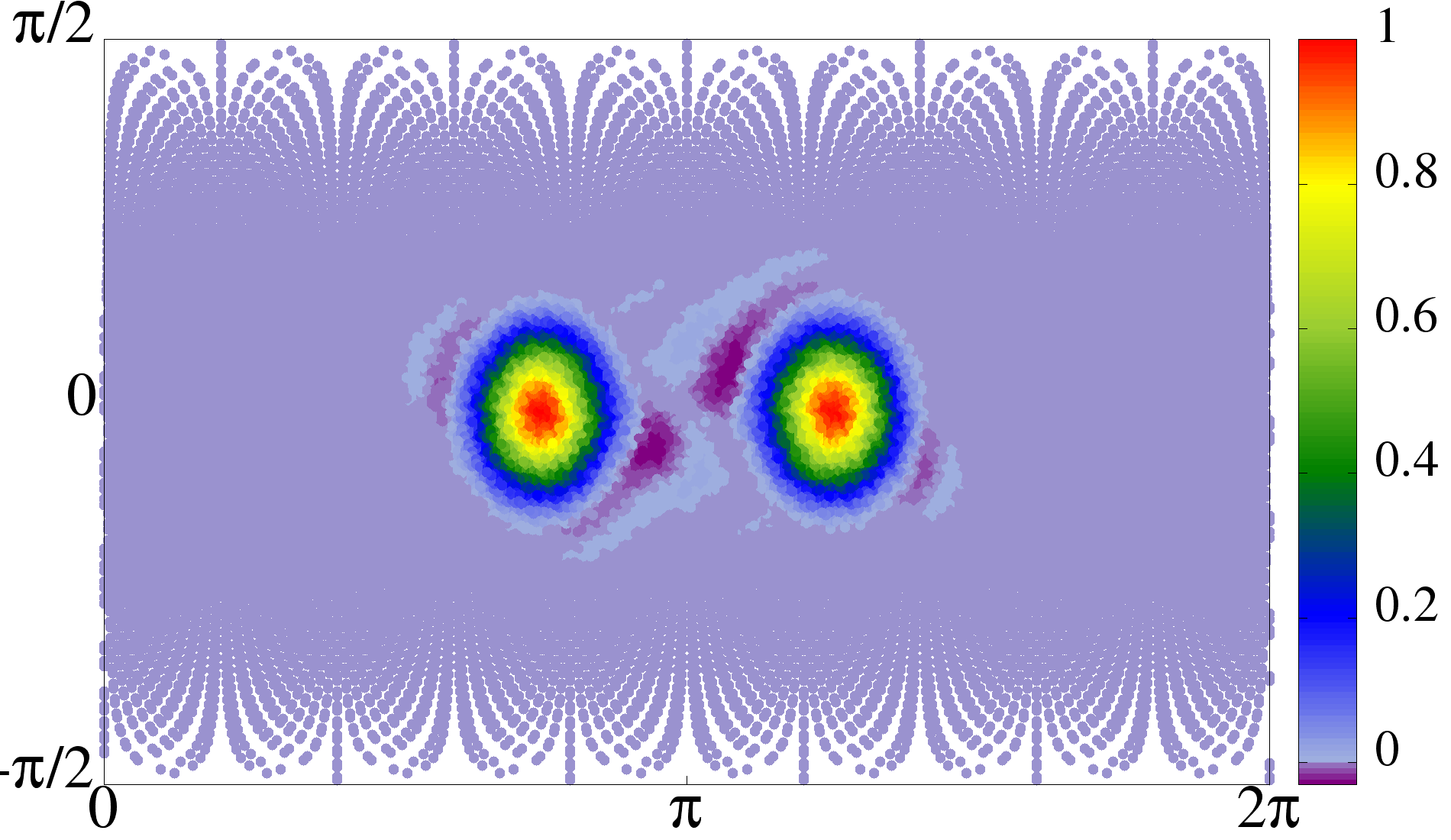}
  \caption{}
  \label{fig:DF3CosineAdjoint}
\end{subfigure}
\begin{subfigure}{.5\textwidth}
  \centering
  \includegraphics[width=\textwidth]{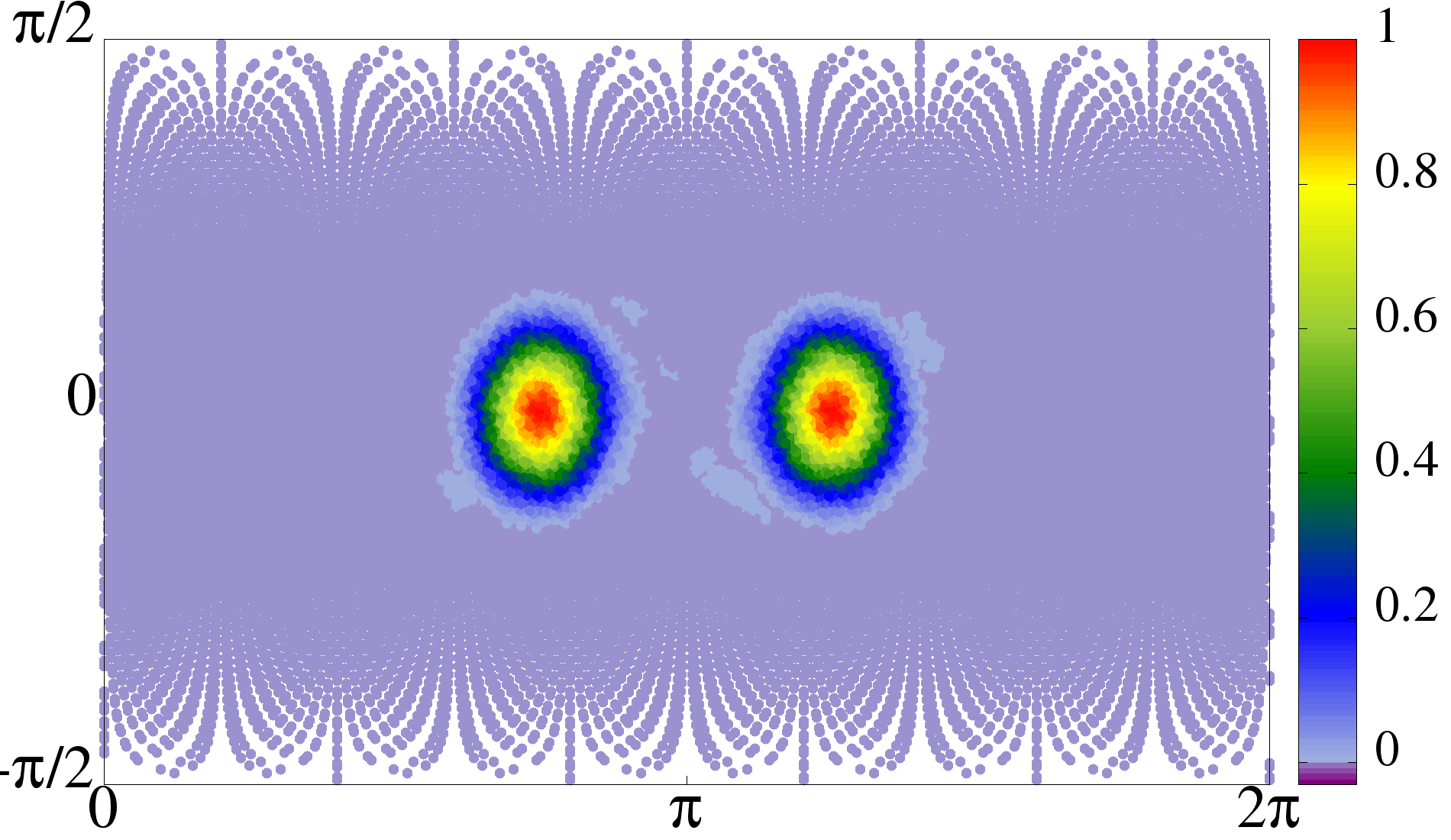}
  \caption{}
  \label{fig:DF3CosineSourceLim3}
\end{subfigure}%
\begin{subfigure}{.5\textwidth}
  \centering
  \includegraphics[width=\textwidth]{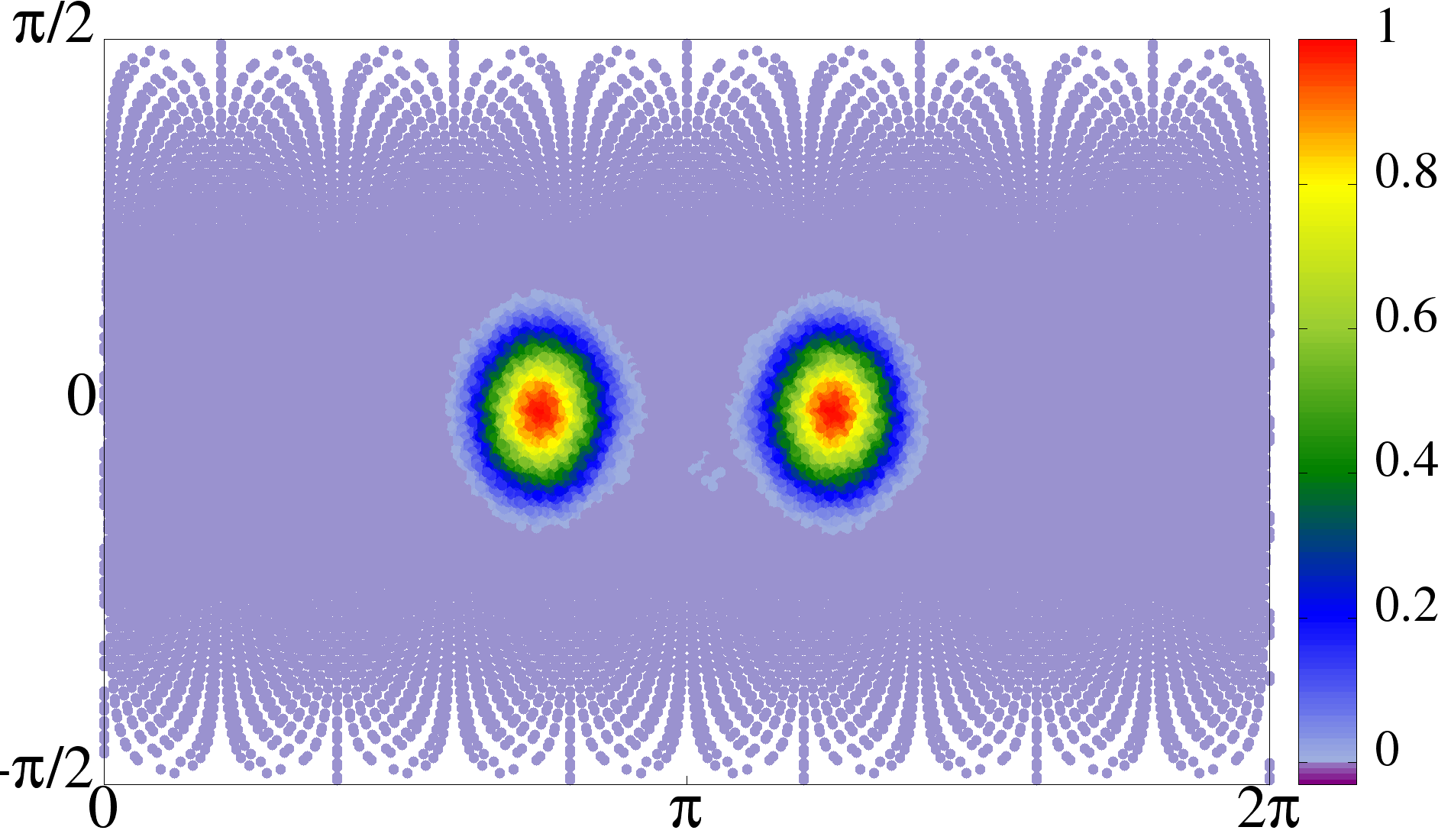}
  \caption{}
  \label{fig:DF3CosineSourceLim4}
\end{subfigure}
\caption{Deformational flow, $\nabla \vec{v}\neq0$, cosine bells, a)-d) - contour plots with color bar: 
                                          a) exact solution
                                          b) standard adjoint, 
                                          c) art. source adjoint, limiter ~ \cite{zalesak1979fully}, \cite{schar1996synchronous}
                                          d) art. source adjoint, limiter ~\cite{zalesak1979fully}, \cite{harris2011flux}     
                                          }
\label{fig:DefFl2:2CosBellContour}
\end{figure}
%
\begin{figure}[h]
\begin{subfigure}{.5\textwidth}
  \centering
  \includegraphics[width=\textwidth]{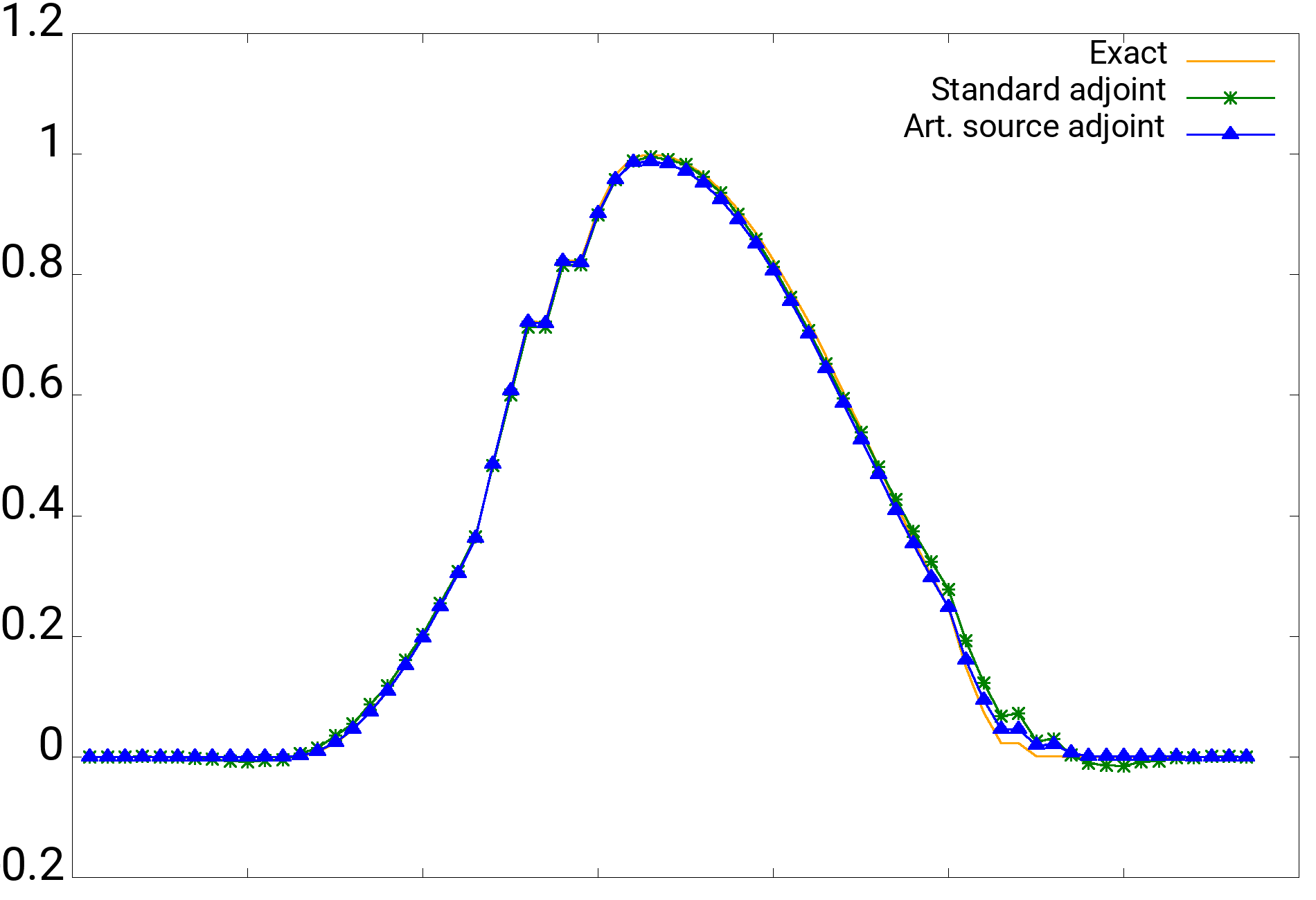}
  \caption{}
  \label{fig:DF3CosineAdjointSourceLim3}
\end{subfigure}%
\begin{subfigure}{.5\textwidth}
  \centering
  \includegraphics[width=\textwidth]{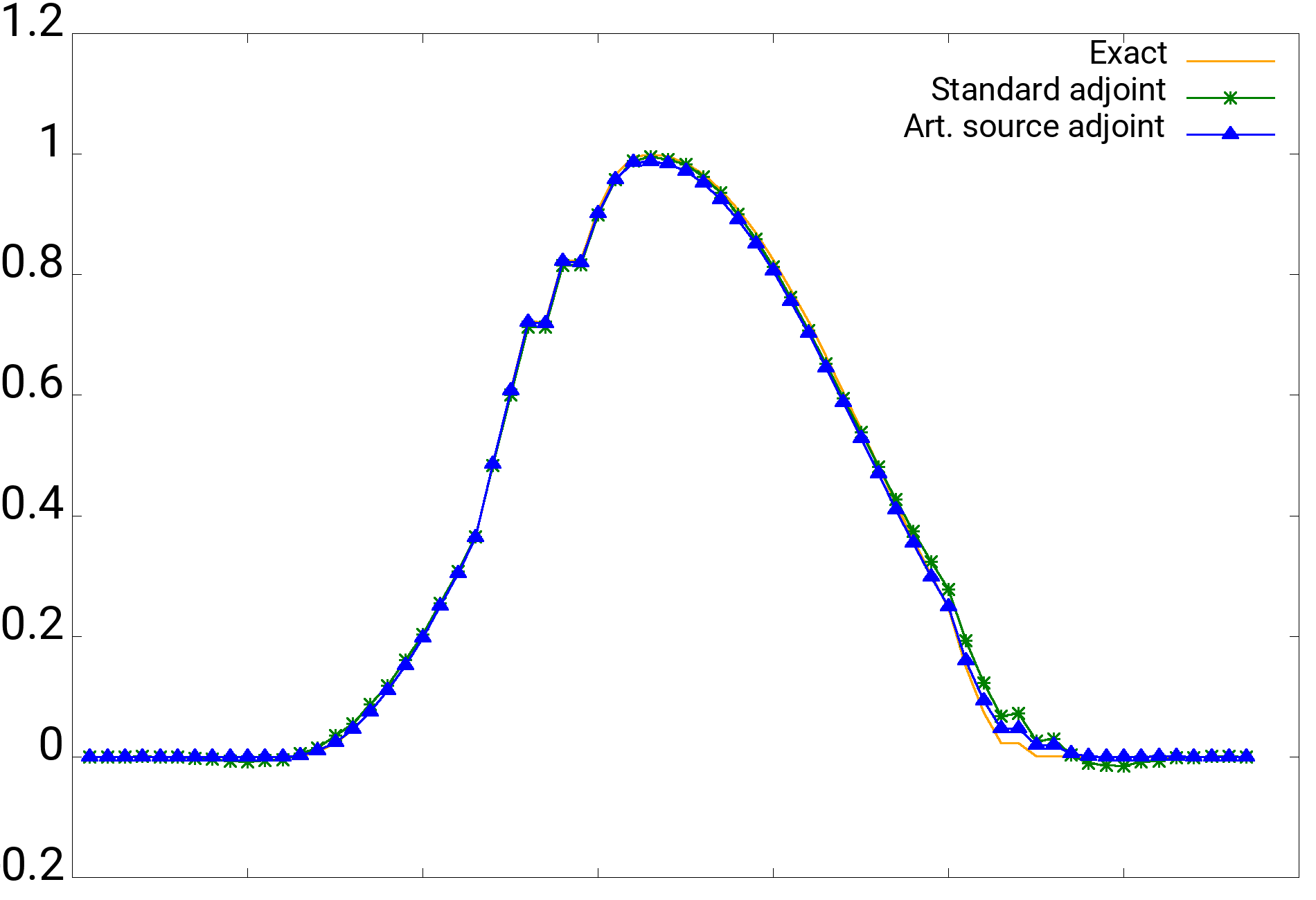}
  \caption{}
  \label{fig:DF3CosineAdjointSourceLim4}
\end{subfigure}
\begin{subfigure}{.5\textwidth}
  \centering
  \includegraphics[width=\textwidth]{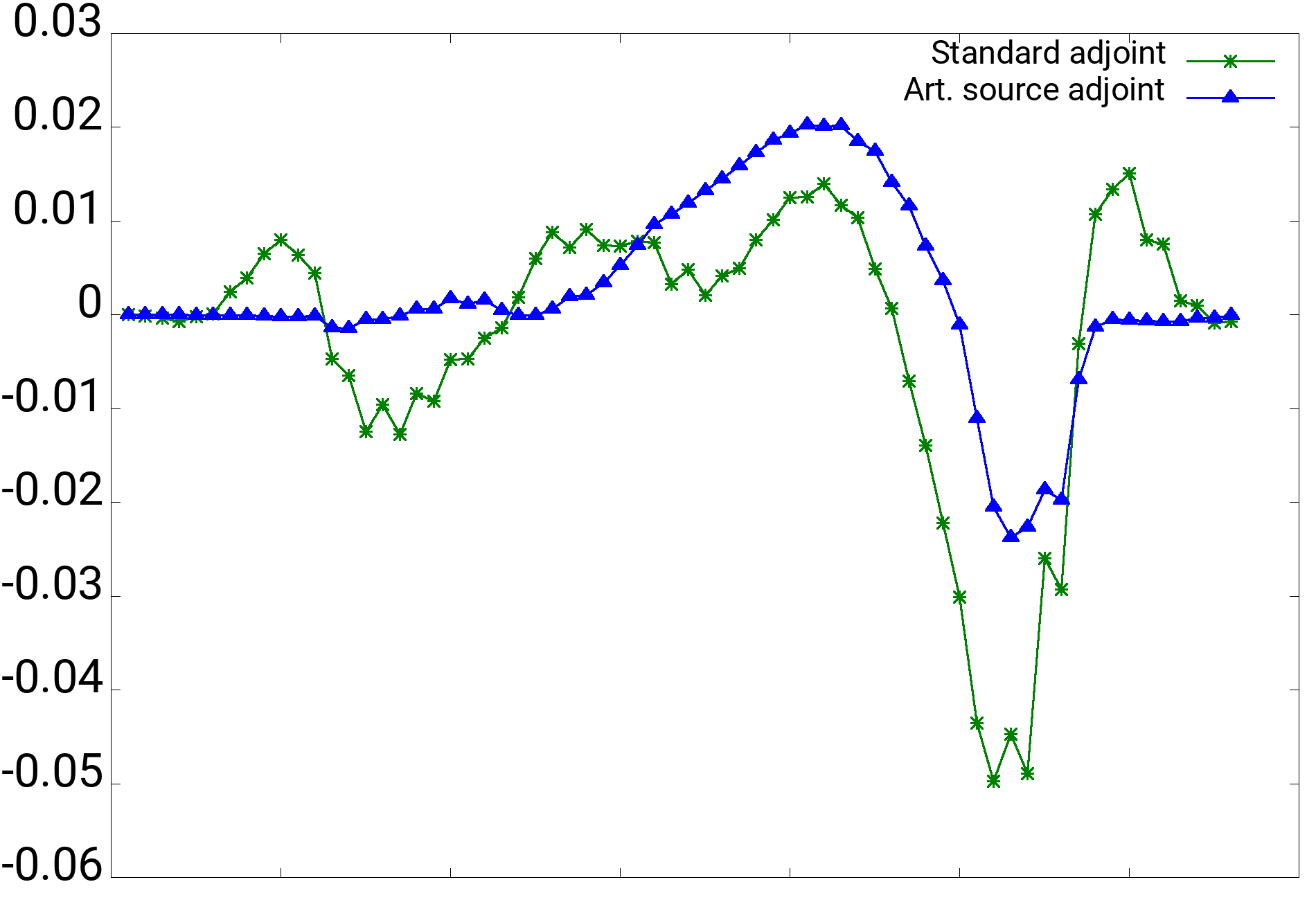}
  \caption{}
  \label{fig:DF3CosineAdjointSourceDiffLim3}
\end{subfigure}%
\begin{subfigure}{.5\textwidth}
  \centering
  \includegraphics[width=\textwidth]{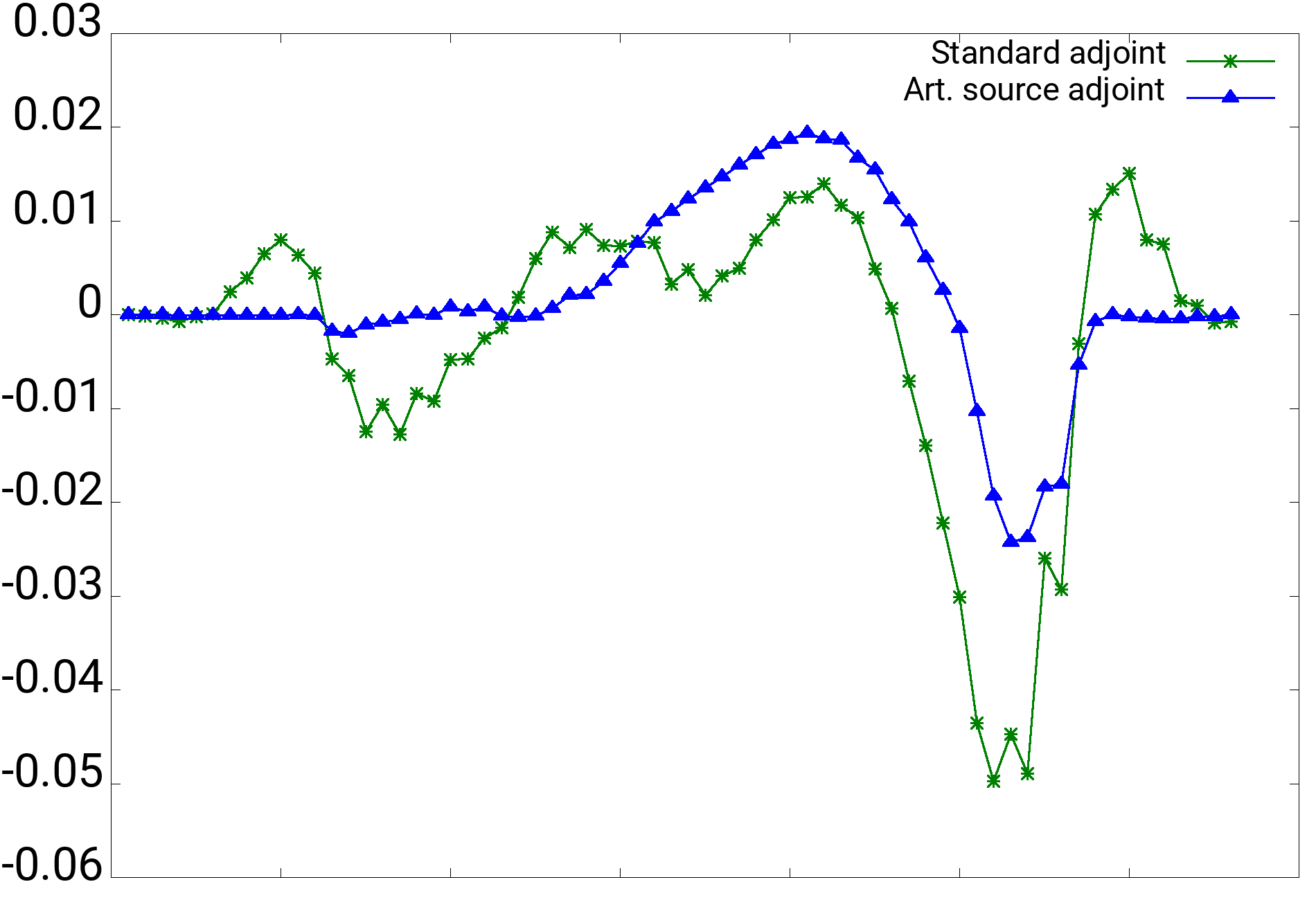}
  \caption{}
  \label{fig:DF3CosineAdjointSourceDiffLim4}
\end{subfigure}
\caption{Deformational flow, $\nabla \vec{v}\neq0$, cosine bells, a)-d) - along the curve exact vs standard adjoint vs art. source adjoint: 
                                                                       a) solutions, art. source with limiter ~ \cite{zalesak1979fully}, \cite{schar1996synchronous} 
                                                                       b) solutions, art. source with limiter ~ \cite{zalesak1979fully}, \cite{harris2011flux} 
                                                                       c) errors, art. source with limiter ~ \cite{zalesak1979fully}, \cite{schar1996synchronous}  
                                                                       d) errors, art. source with limiter ~ \cite{zalesak1979fully}, \cite{harris2011flux}  
                                                                       } 
\label{fig:DefFl2:2CosBellCurve}
\end{figure}
%
\begin{figure}
\begin{subfigure}{.5\textwidth}
  \centering
  \includegraphics[width=\textwidth]{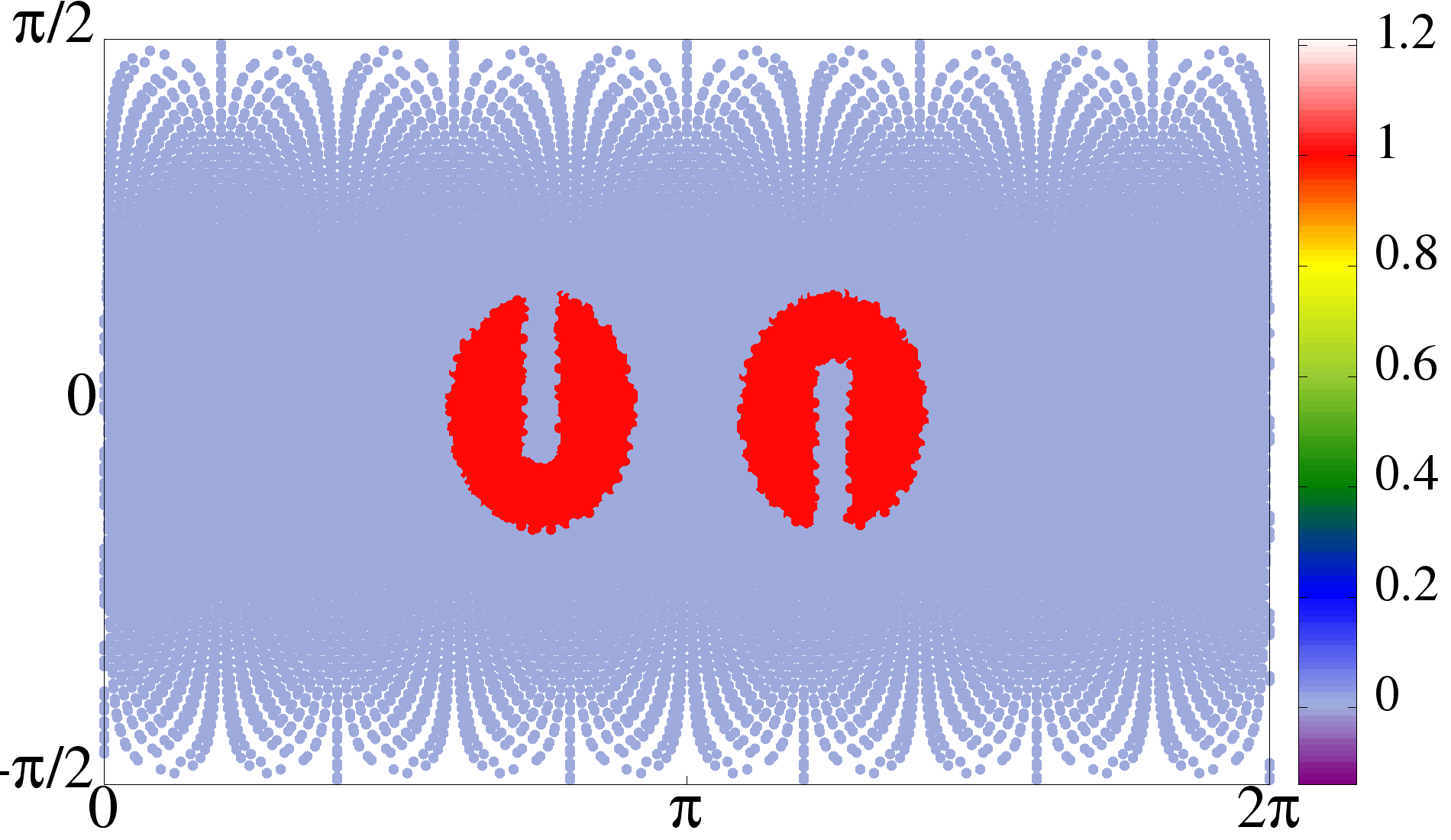}
  \caption{}
  \label{fig:DF3CylinderExact}
\end{subfigure}%
\begin{subfigure}{.5\textwidth}
  \centering
  \includegraphics[width=\textwidth]{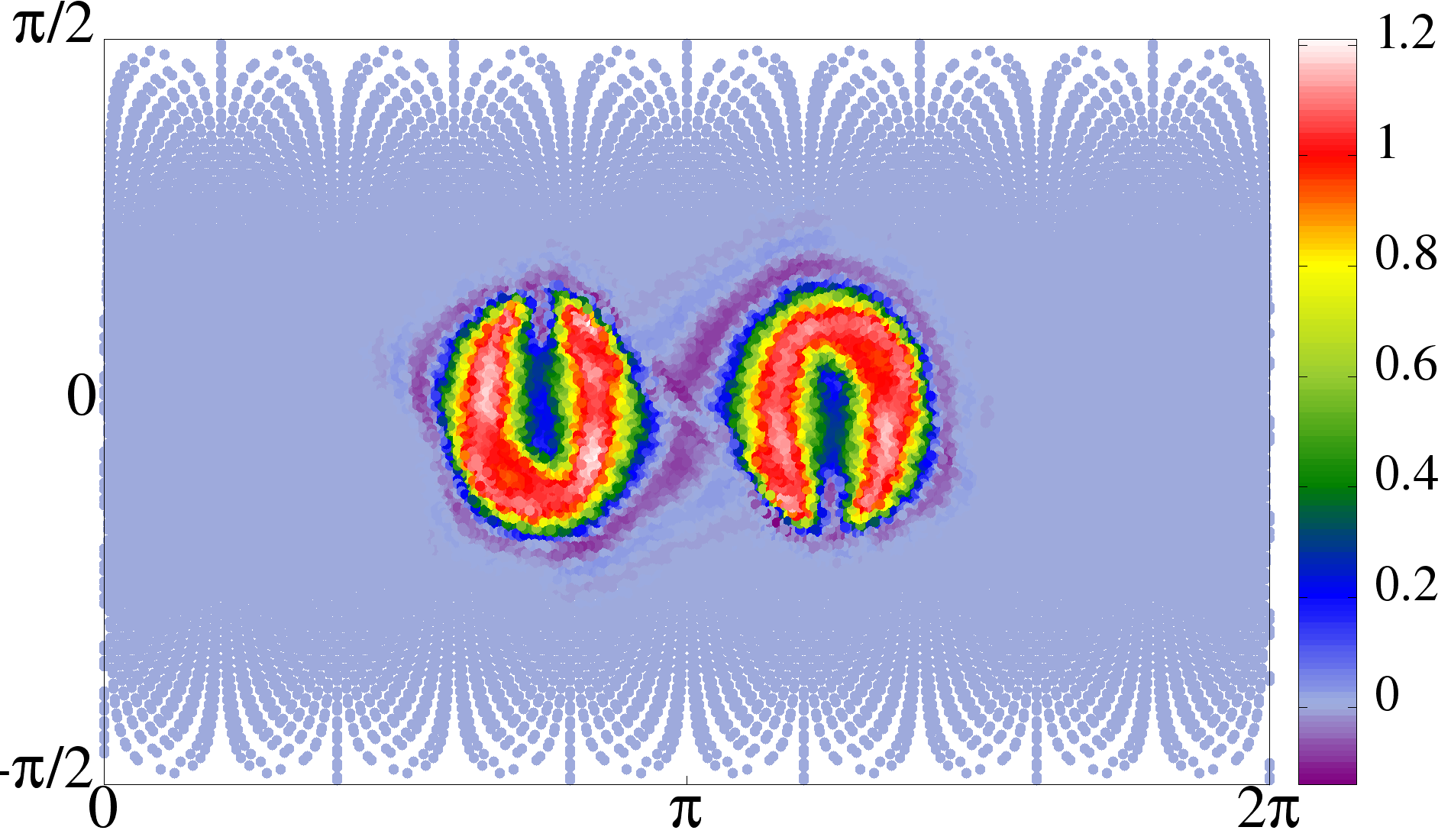}
  \caption{}
  \label{fig:DF3CylinderAdjoint}
\end{subfigure}
\begin{subfigure}{.5\textwidth}
  \centering
  \includegraphics[width=\textwidth]{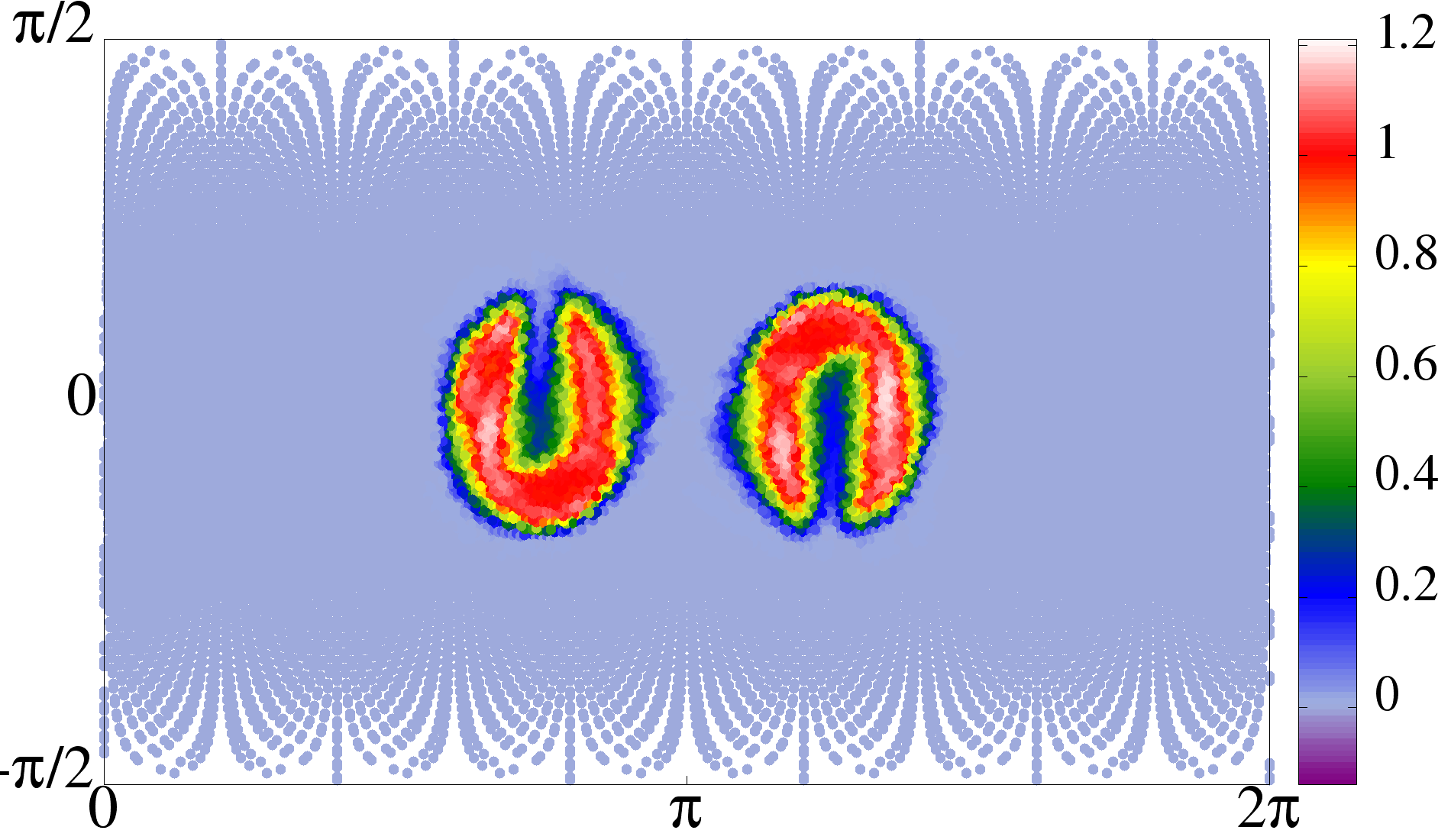}
  \caption{}
  \label{fig:DF3CylinderSourceLim3}
\end{subfigure}%
\begin{subfigure}{.5\textwidth}
  \centering
  \includegraphics[width=\textwidth]{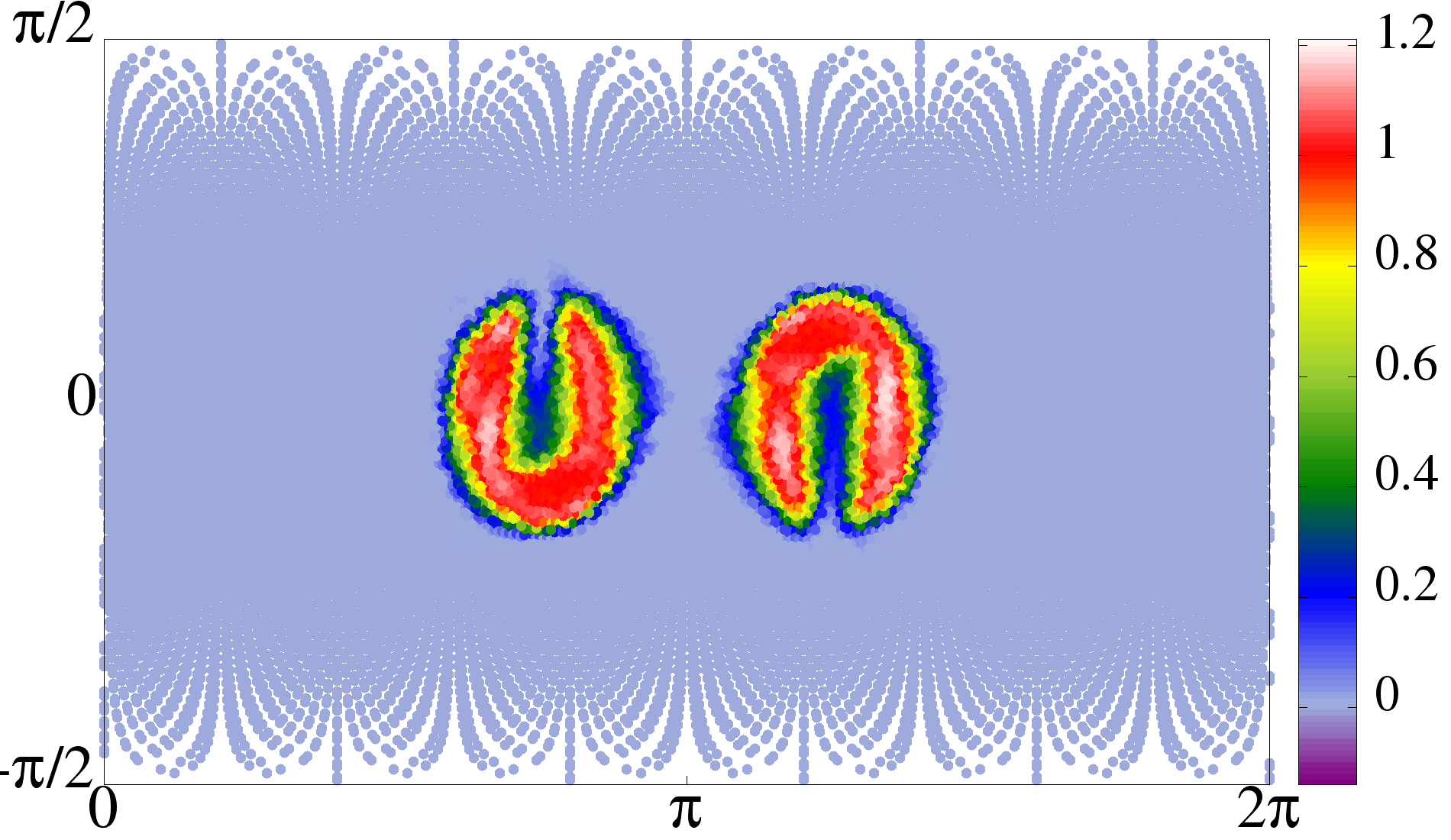}
  \caption{}
  \label{fig:DF3CylinderLim4}
\end{subfigure}
\caption{Deformational flow, $\nabla \vec{v}\neq0$, slotted cylinders, a)-d) - contour plots with color bar: 
                                          a) exact solution
                                          b) standard adjoint, 
                                          c) art. source adjoint, limiter ~ \cite{zalesak1979fully}, \cite{schar1996synchronous}
                                          d) art. source adjoint, limiter ~\cite{zalesak1979fully}, \cite{harris2011flux}    
                                          }
\label{fig:DefFl2:2SlottedCylinderContour}
\end{figure}

\begin{figure}[h]
\begin{subfigure}{.5\textwidth}
  \centering
  \includegraphics[width=\textwidth]{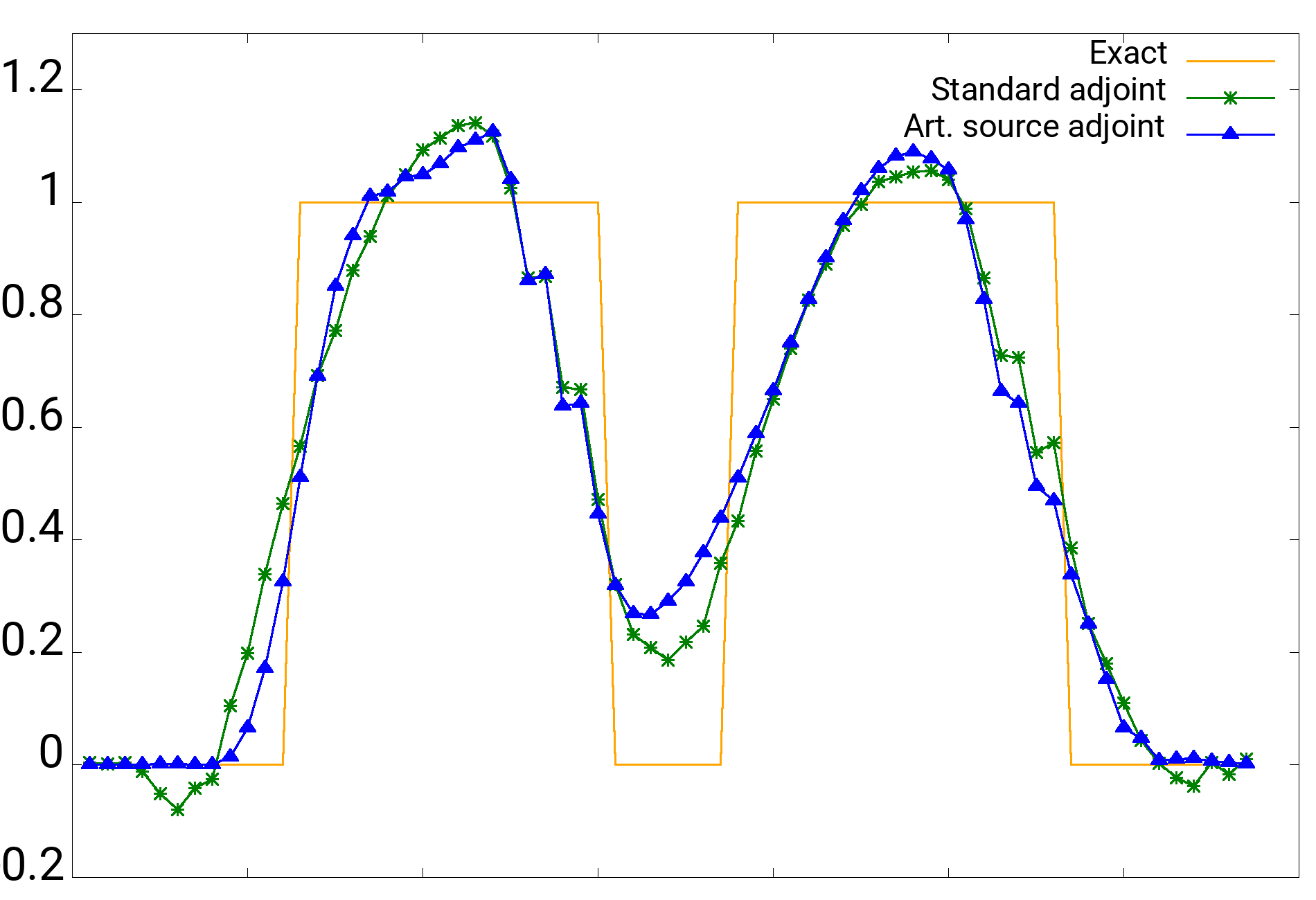}
  \caption{}
  \label{fig:DF3CylinderAdjointSourceLim3}
\end{subfigure}%
\begin{subfigure}{.5\textwidth}
  \centering
  \includegraphics[width=\textwidth]{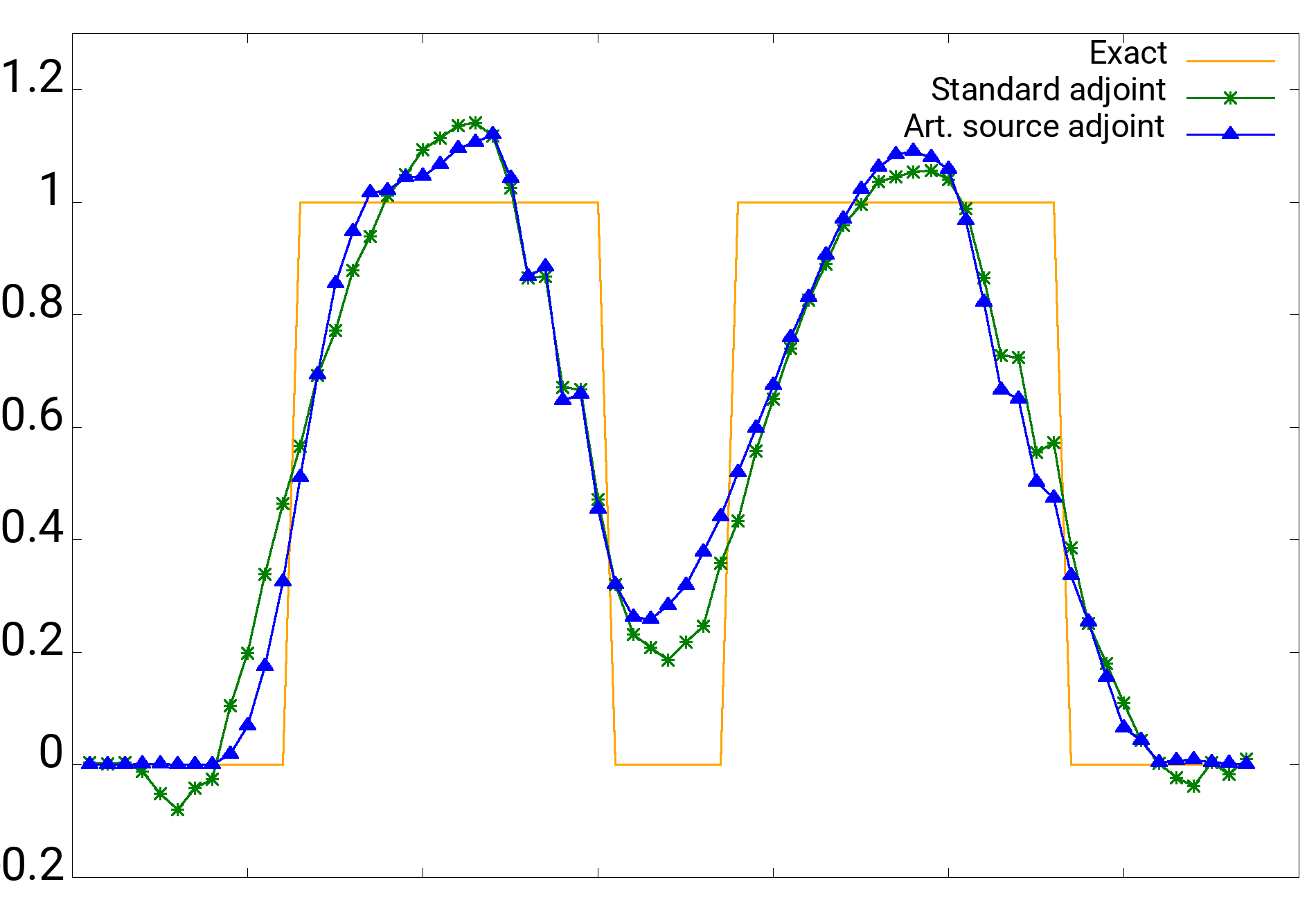}
  \caption{}
  \label{fig:DF3CylinderAdjointSourceLim4}
\end{subfigure}
\begin{subfigure}{.5\textwidth}
  \centering
  \includegraphics[width=\textwidth]{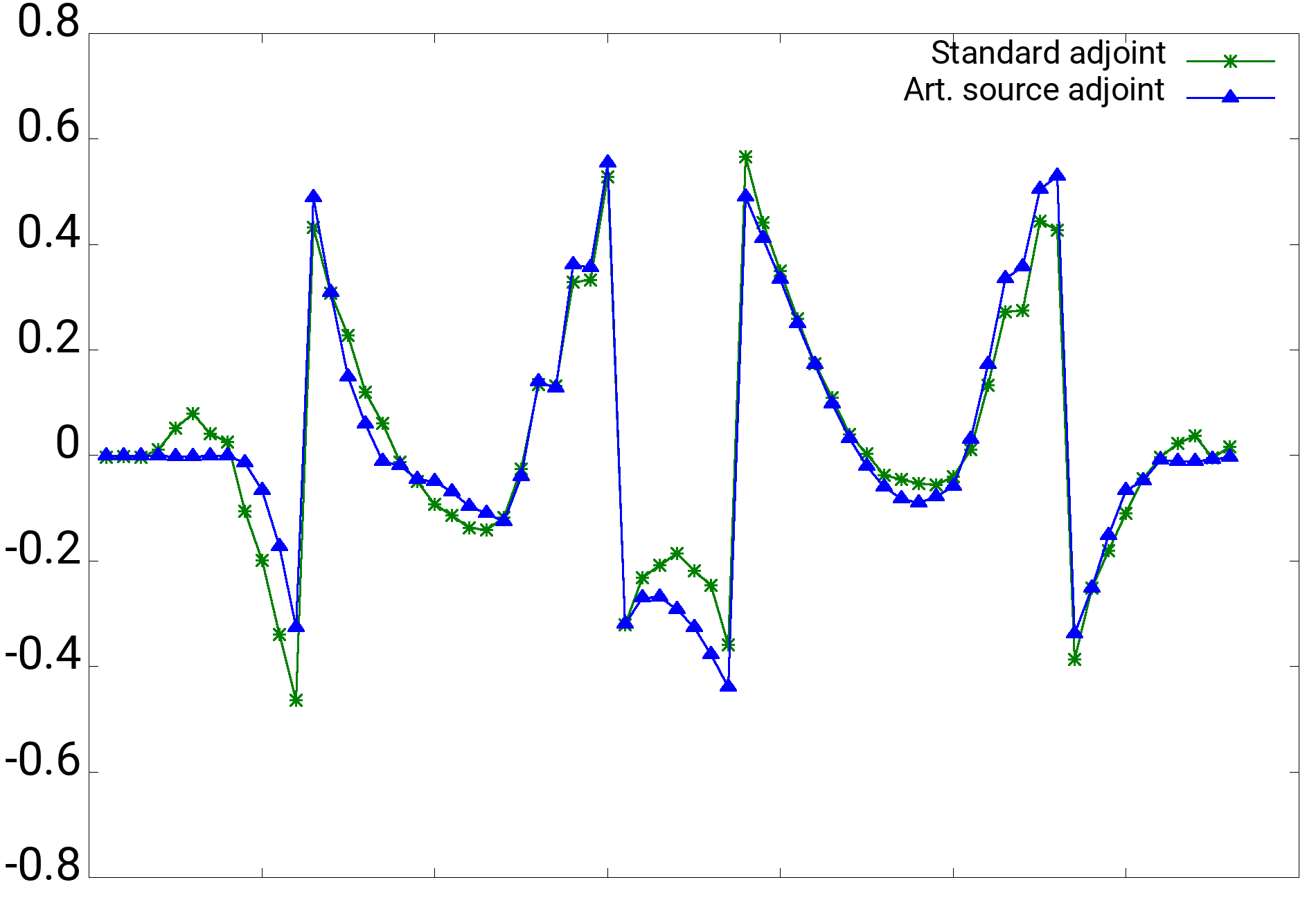}
  \caption{}
  \label{fig:DF3CylinderAdjointSourceDiffLim3}
\end{subfigure}%
\begin{subfigure}{.5\textwidth}
  \centering
  \includegraphics[width=\textwidth]{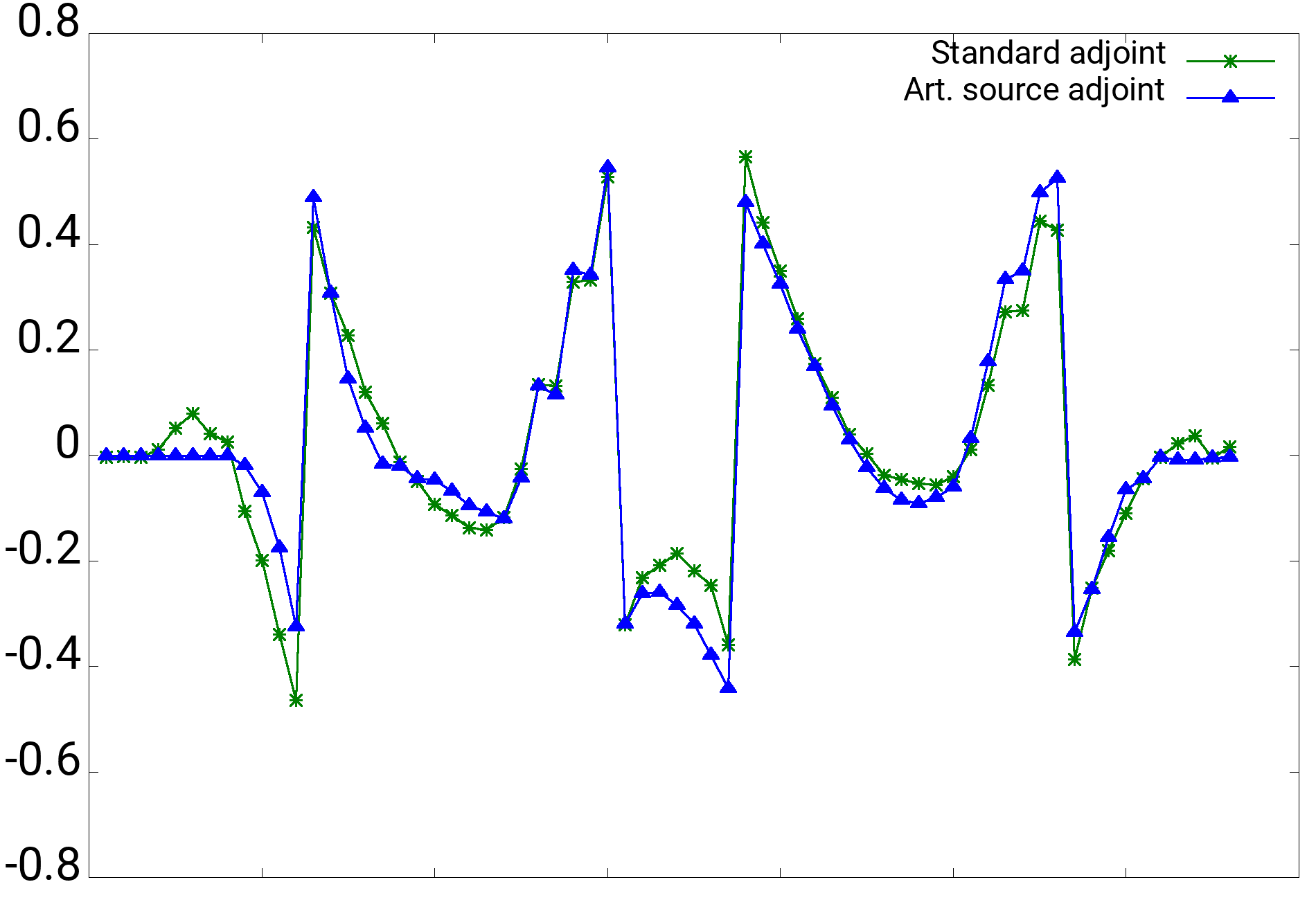}
  \caption{}
  \label{fig:DF3CylinderAdjointSourceDiffLim4}
\end{subfigure}
\caption{Deformational flow, $\nabla \vec{v}\neq0$, slotted cylinders, a)-d) - along the curve  exact vs standard adjoint vs art. source adjoint : 
                                                          a) solutions, art. source with limiter ~ \cite{zalesak1979fully}, \cite{schar1996synchronous} 
                                                          b) solutions, art. source with limiter ~ \cite{zalesak1979fully}, \cite{harris2011flux} 
                                                          c) errors, art. source with limiter ~ \cite{zalesak1979fully}, \cite{schar1996synchronous}  
                                                          d) errors, art. source with limiter ~ \cite{zalesak1979fully}, \cite{harris2011flux}  
                                                          } 
\label{fig:DefFl2:2SlottedCylinderCurve}
\end{figure}

\begin{figure}
\begin{subfigure}{.5\textwidth}
  \centering
  \includegraphics[width=\textwidth]{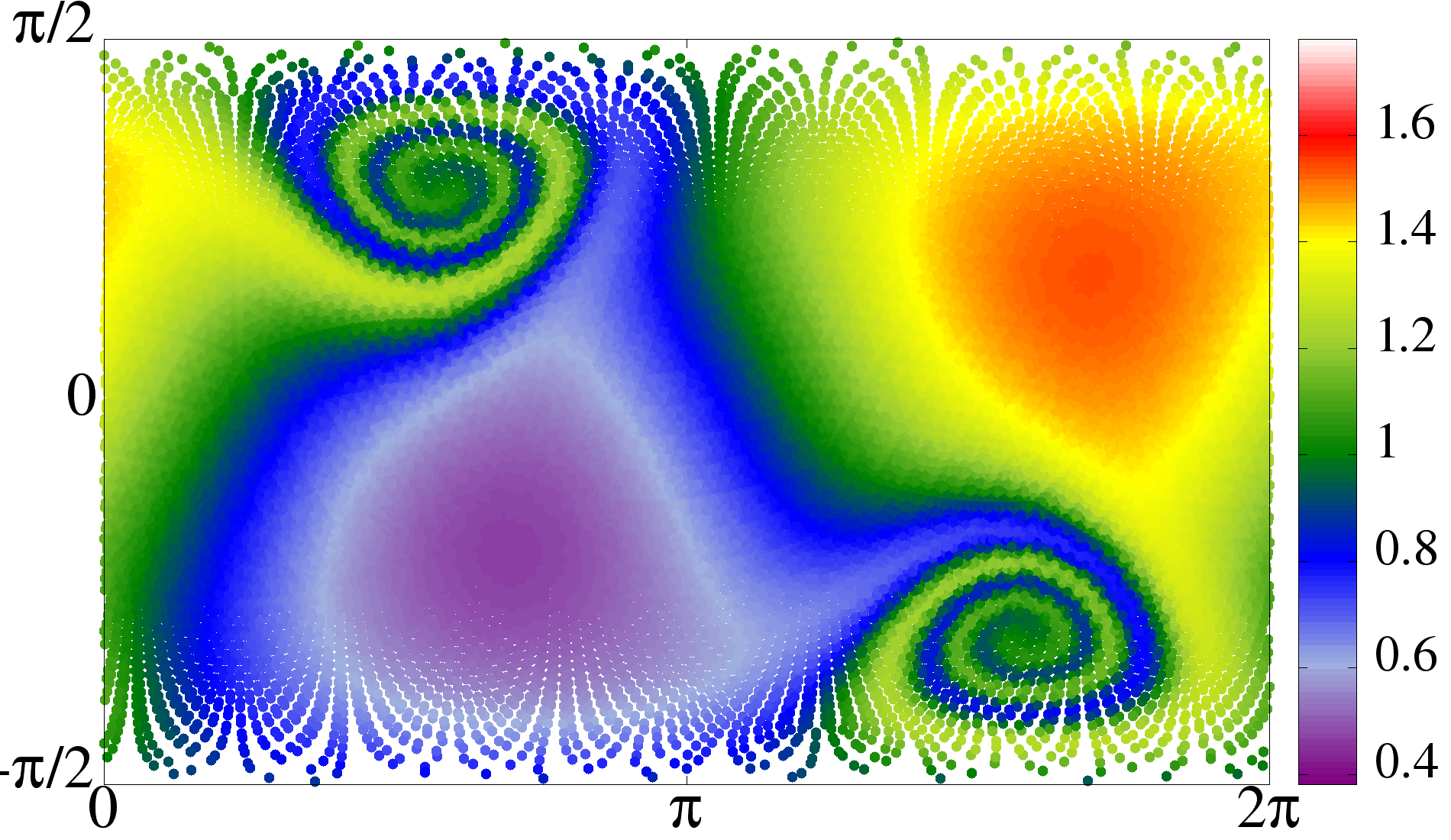}
  \caption{}
  \label{fig:MVExact}
\end{subfigure}%
\begin{subfigure}{.5\textwidth}
  \centering
  \includegraphics[width=\textwidth]{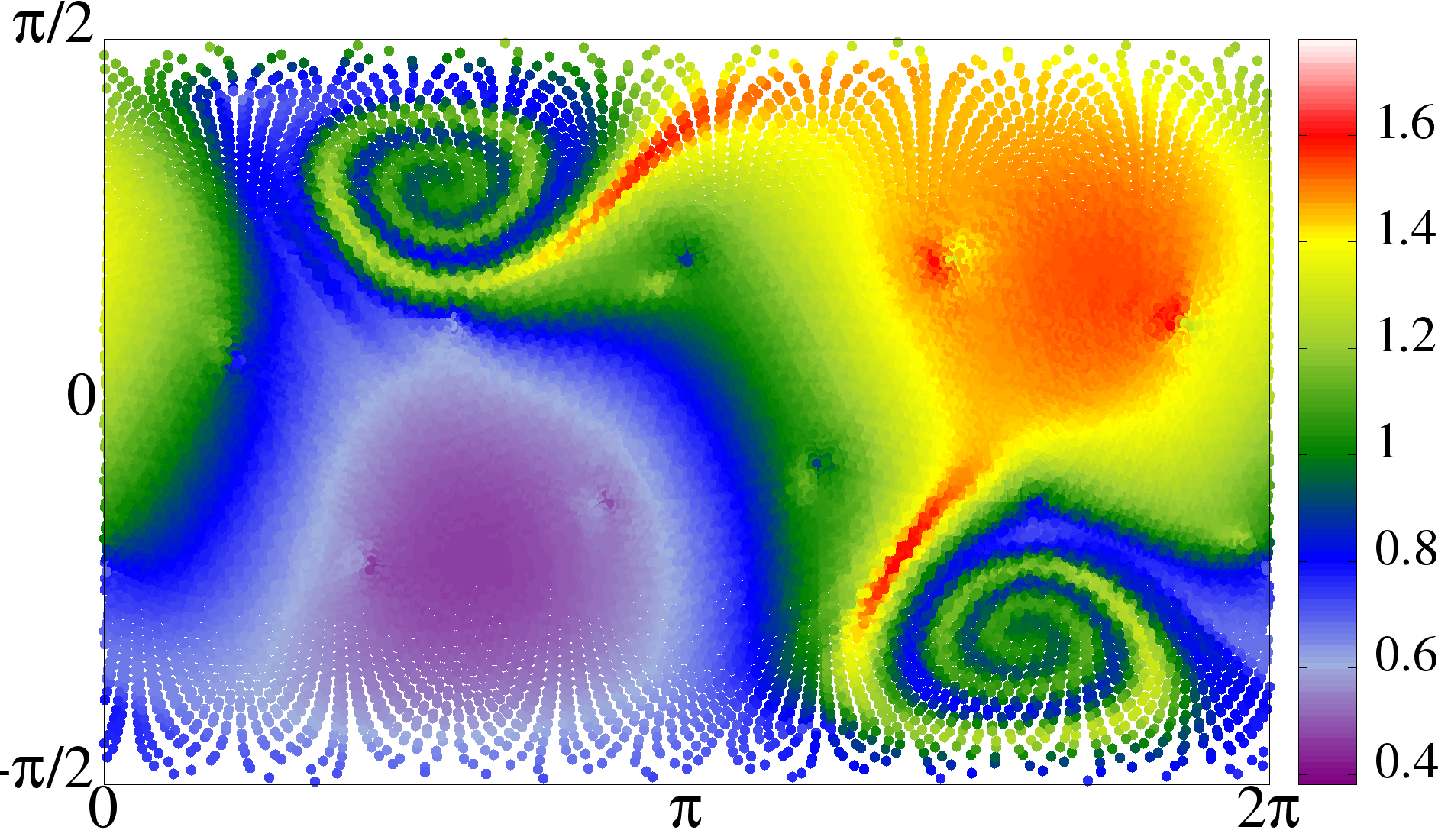}
  \caption{}
  \label{fig:MVAdjoint}
\end{subfigure}
\begin{subfigure}{.5\textwidth}
  \centering
  \includegraphics[width=\textwidth]{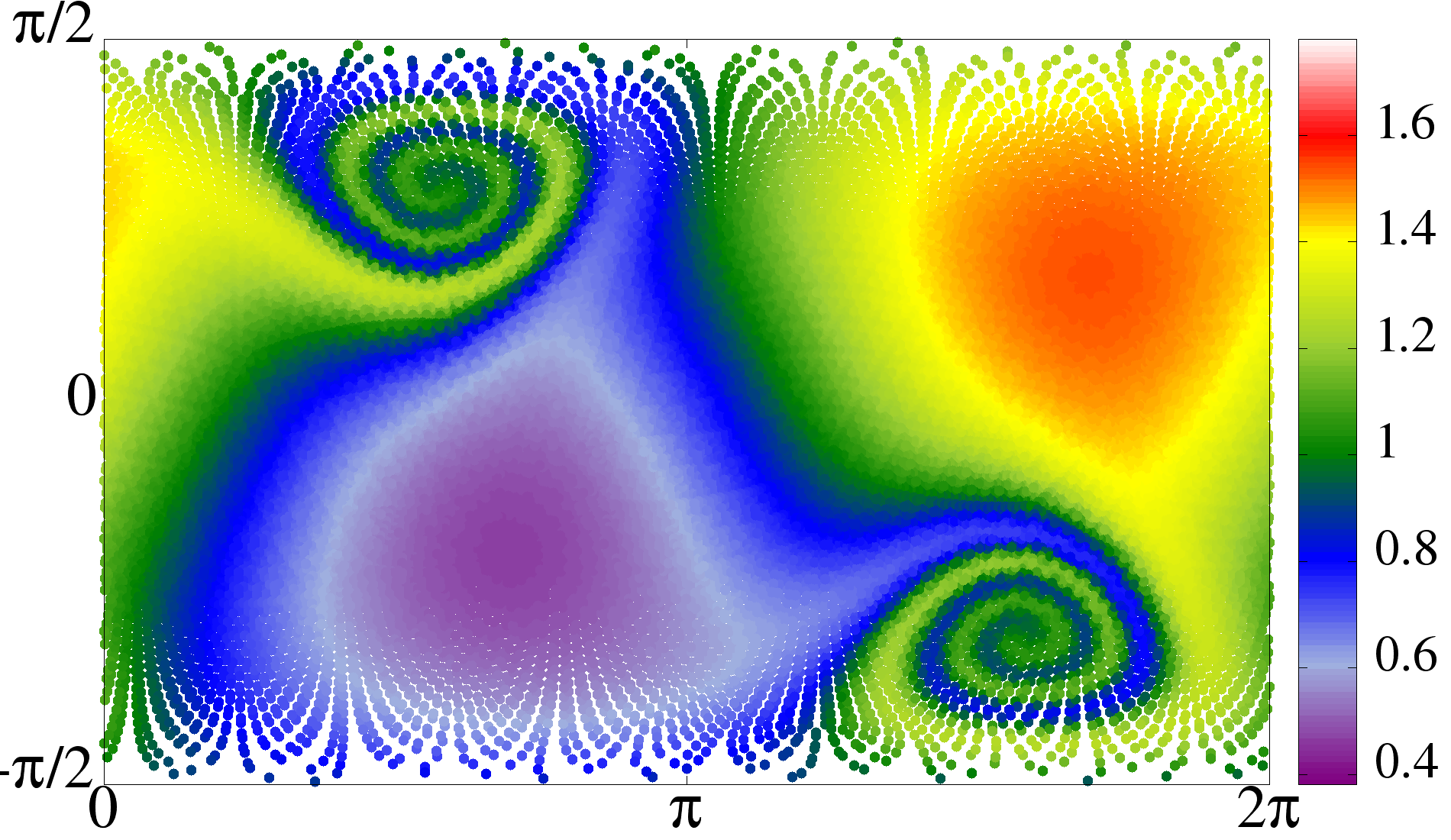}
  \caption{}
  \label{fig:MVSourceLim3}
\end{subfigure}%
\begin{subfigure}{.5\textwidth}
  \centering
  \includegraphics[width=\textwidth]{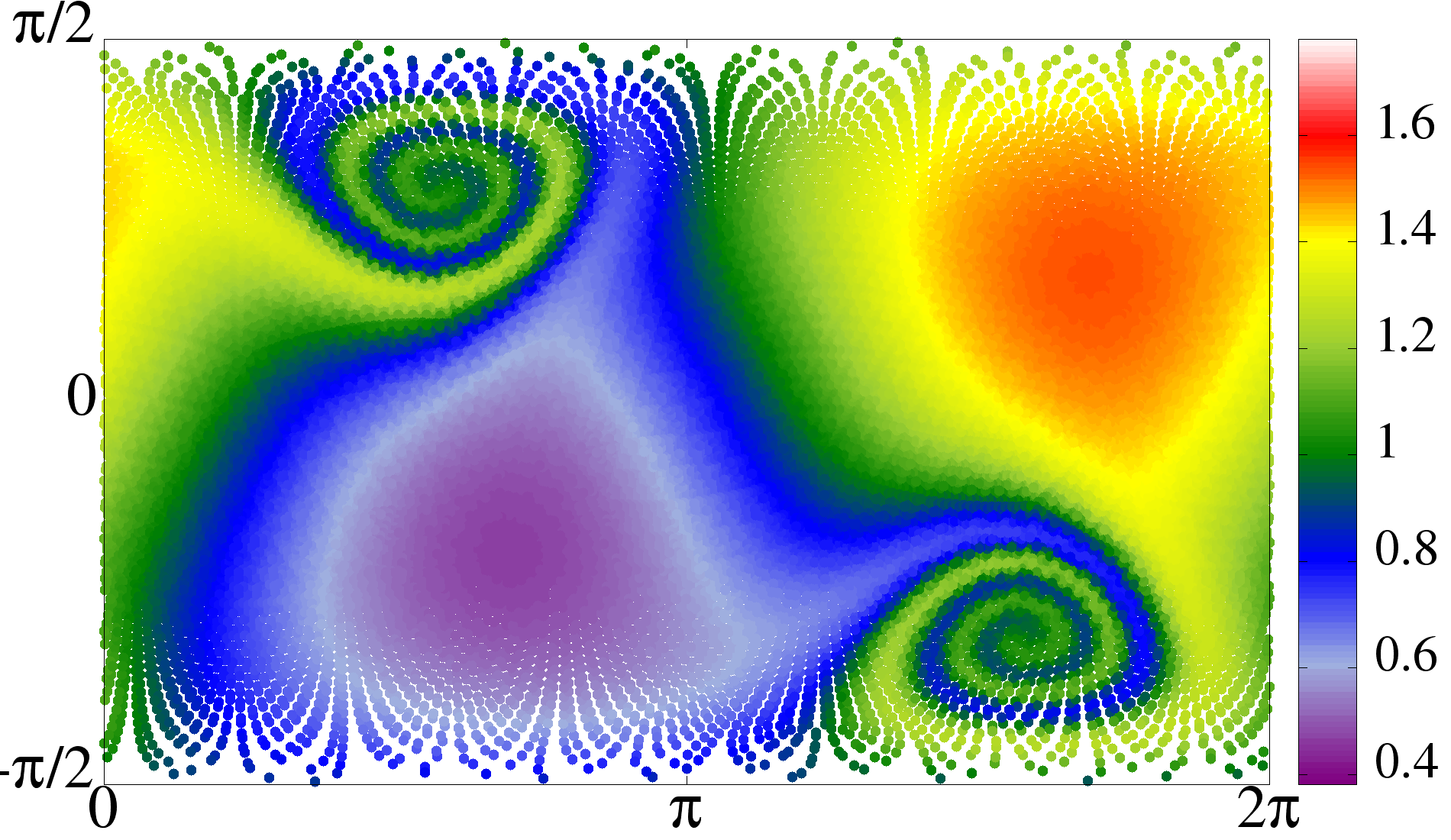}
  \caption{}
  \label{fig:MVSourceLim4}
\end{subfigure}
\caption{Moving vortices, a)-d) - contour plots with color bar: 
                                          a) exact solution
                                          b) standard adjoint, 
                                          c) art. source adjoint, limiter ~ \cite{zalesak1979fully}, \cite{schar1996synchronous}
                                          d) art. source adjoint, limiter ~\cite{zalesak1979fully}, \cite{harris2011flux}     
                                          }
\label{fig:MovingVortexContour}
\end{figure}

\begin{figure}
\begin{subfigure}{.5\textwidth}
  \centering
  \includegraphics[width=\textwidth]{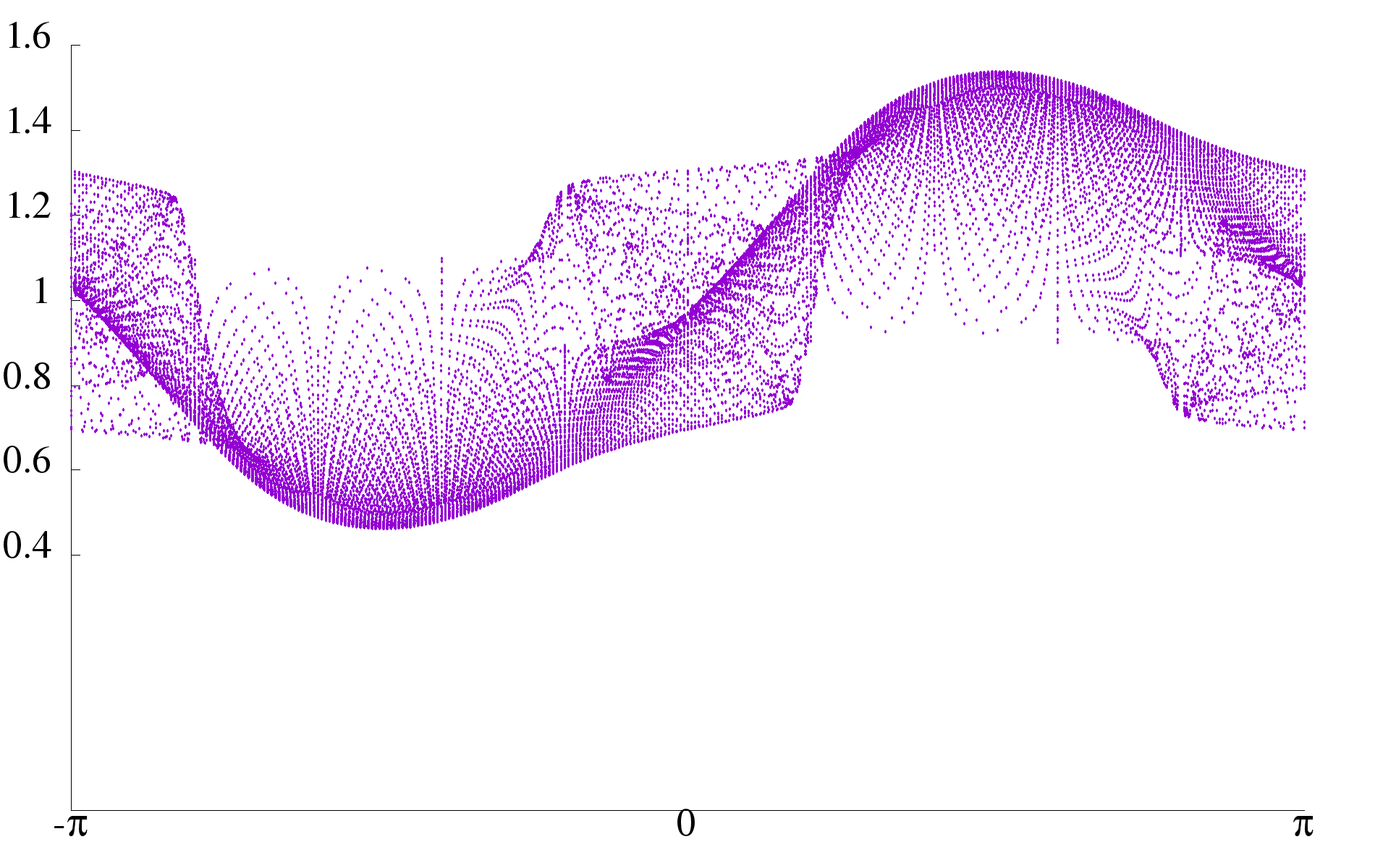}
  \caption{}
  \label{fig:MVExact3D}
\end{subfigure}%
\begin{subfigure}{.5\textwidth}
  \centering
  \includegraphics[width=\textwidth]{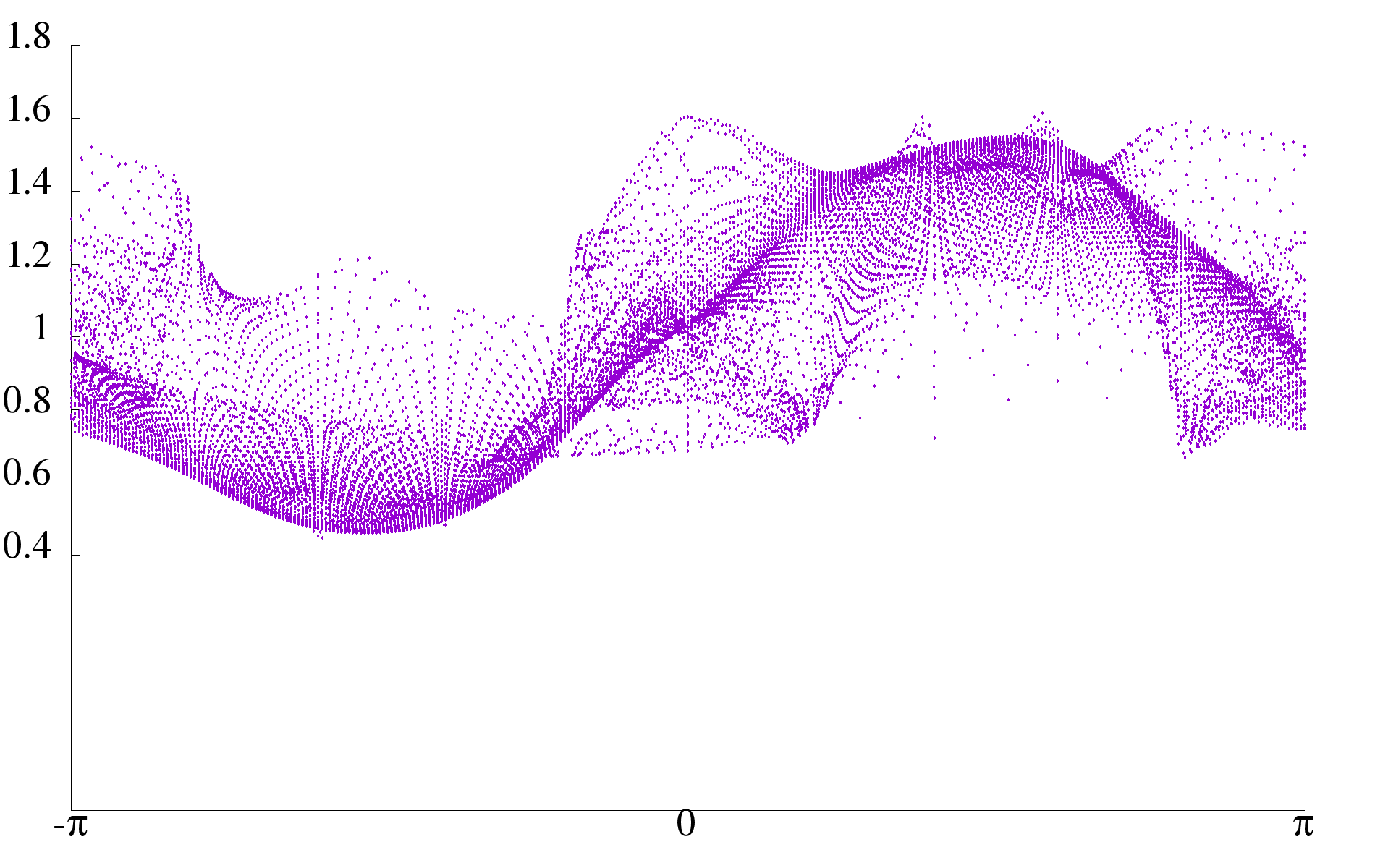}
  \caption{}
  \label{fig:MVAdjoint3D}
\end{subfigure}
\begin{subfigure}{.5\textwidth}
  \centering
  \includegraphics[width=\textwidth]{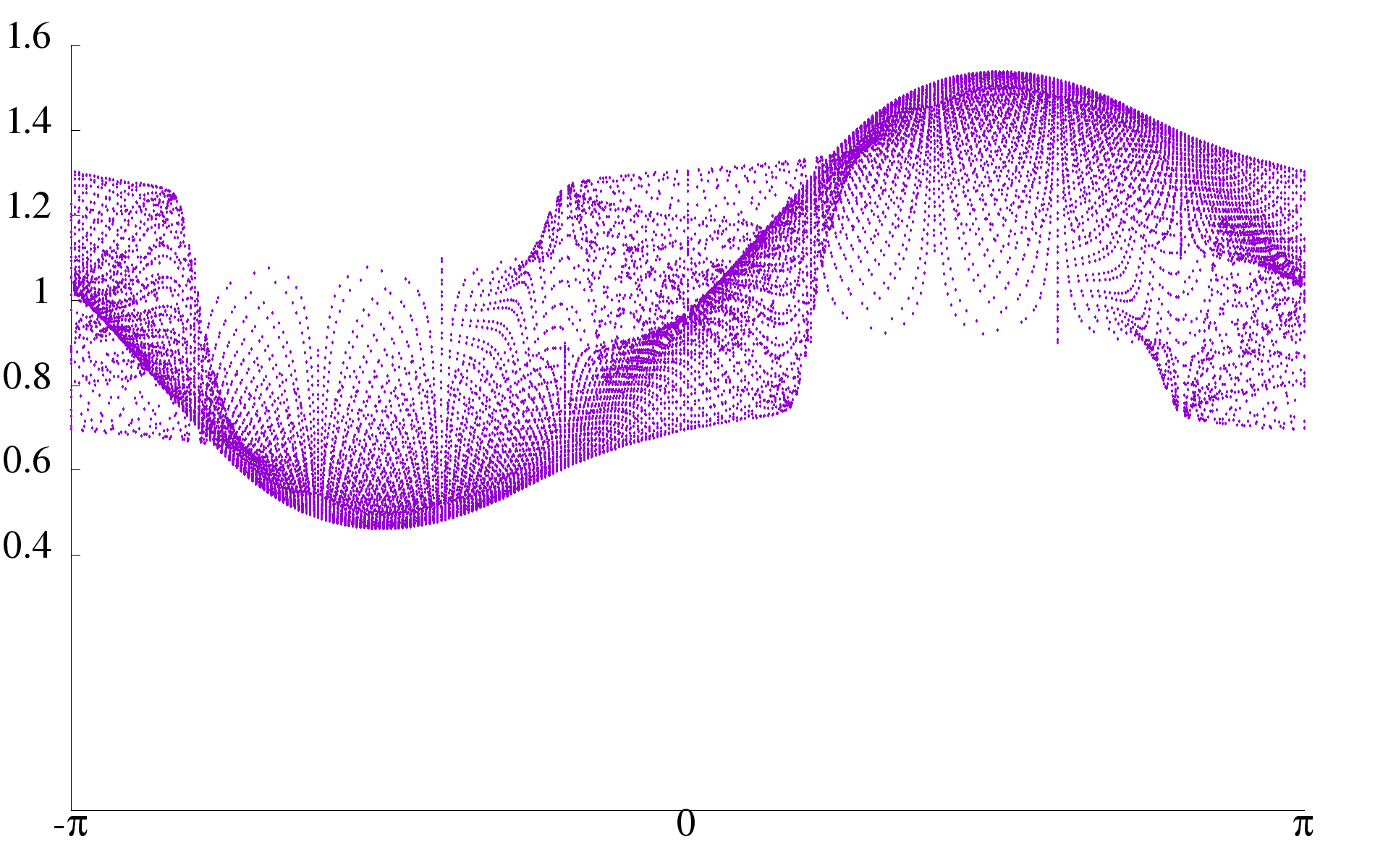}
  \caption{}
  \label{fig:MVSourceLim33D}
\end{subfigure}%
\begin{subfigure}{.5\textwidth}
  \centering
  \includegraphics[width=\textwidth]{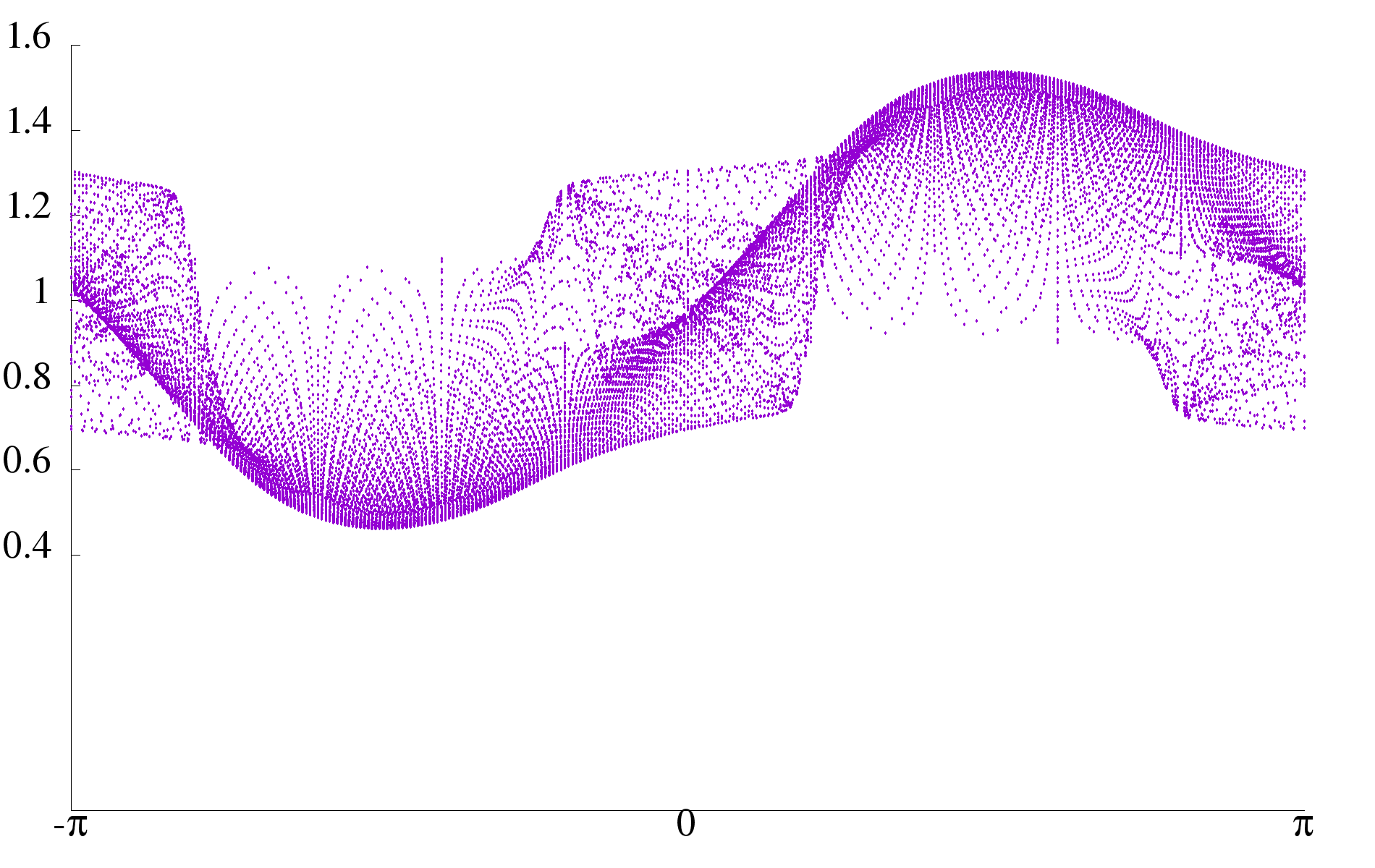}
  \caption{}
  \label{fig:MVSourceLim43D}
\end{subfigure}%
\caption{Moving vortices, a)-d) - unrotated grid $(\lambda, \theta)\in[-\pi; \pi]\times[-\pi/2; \pi/2]$, plot viewing angle $(90^\circ, 0^\circ)$:
                                   a) exact solution
                                   b) standart adjoint
                                   c) art. source adjoint, limiter ~ \cite{zalesak1979fully}, \cite{schar1996synchronous}
                                   d) art. source adjoint, limiter ~\cite{zalesak1979fully}, \cite{harris2011flux}
                                   }
\label{fig:MovingVortexAtT}
\end{figure}

\begin{figure}
\begin{subfigure}{.5\textwidth}
  \centering
  \includegraphics[width=\textwidth]{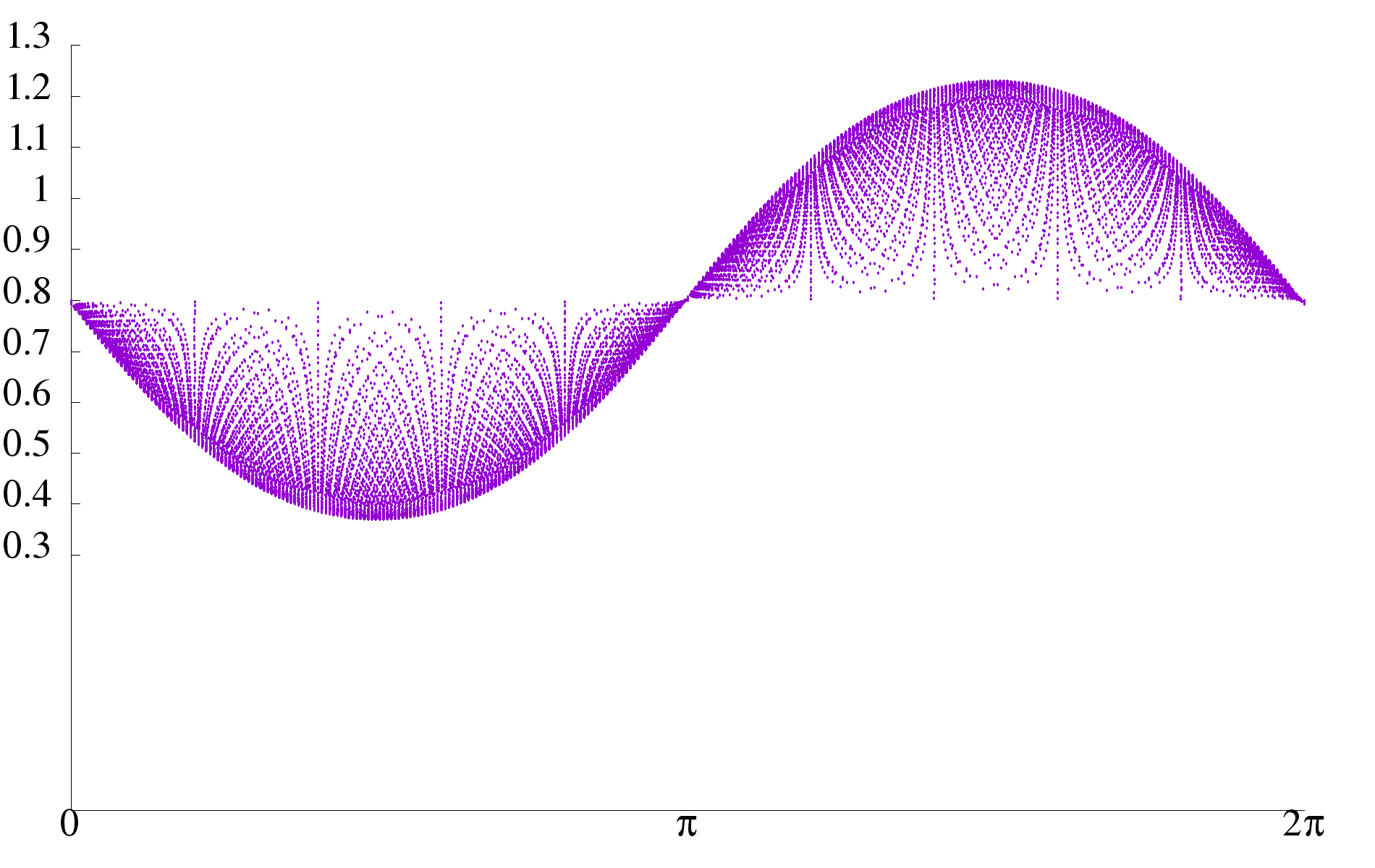}
  \caption{}
  \label{fig:PAMV}
\end{subfigure}%
\begin{subfigure}{.5\textwidth}
  \centering
  \includegraphics[width=\textwidth]{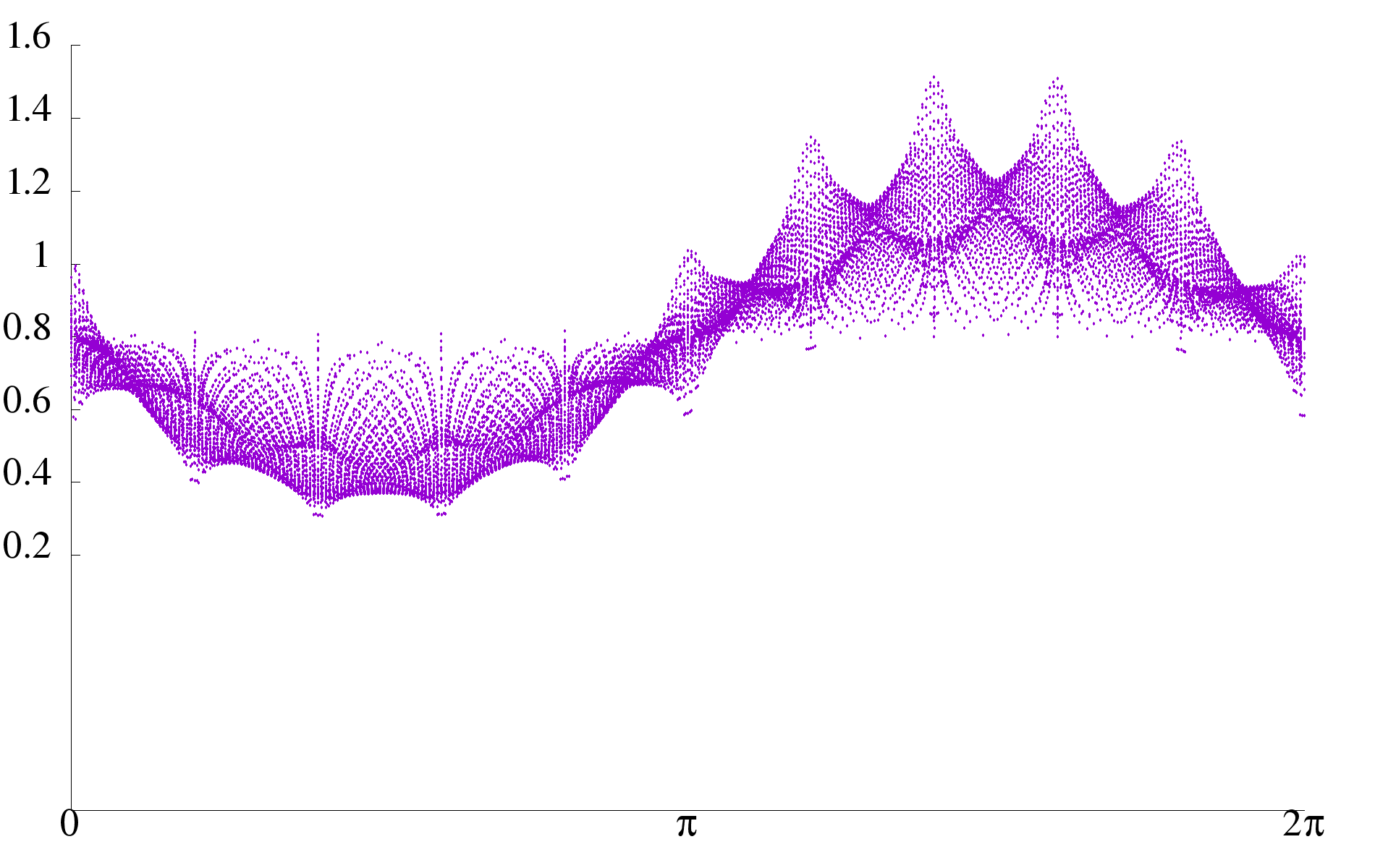}
  \caption{}
  \label{fig:PAMVAdjoint3D}
\end{subfigure}
\begin{subfigure}{.5\textwidth}
  \centering
  \includegraphics[width=\textwidth]{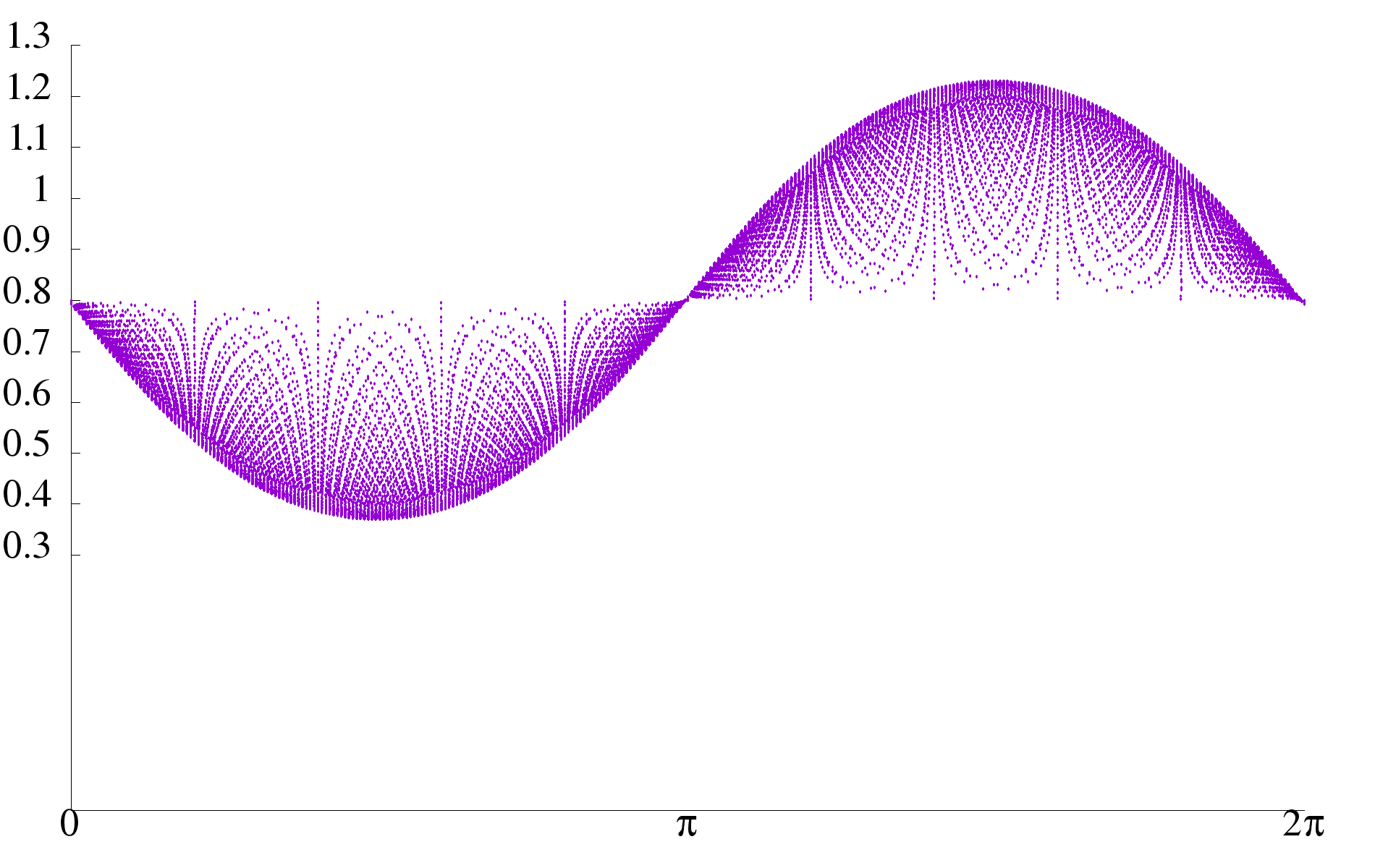}  
  \caption{}
  \label{fig:PAMVSourceLim33D}
\end{subfigure}%
\begin{subfigure}{.5\textwidth}
  \centering
  \includegraphics[width=\textwidth]{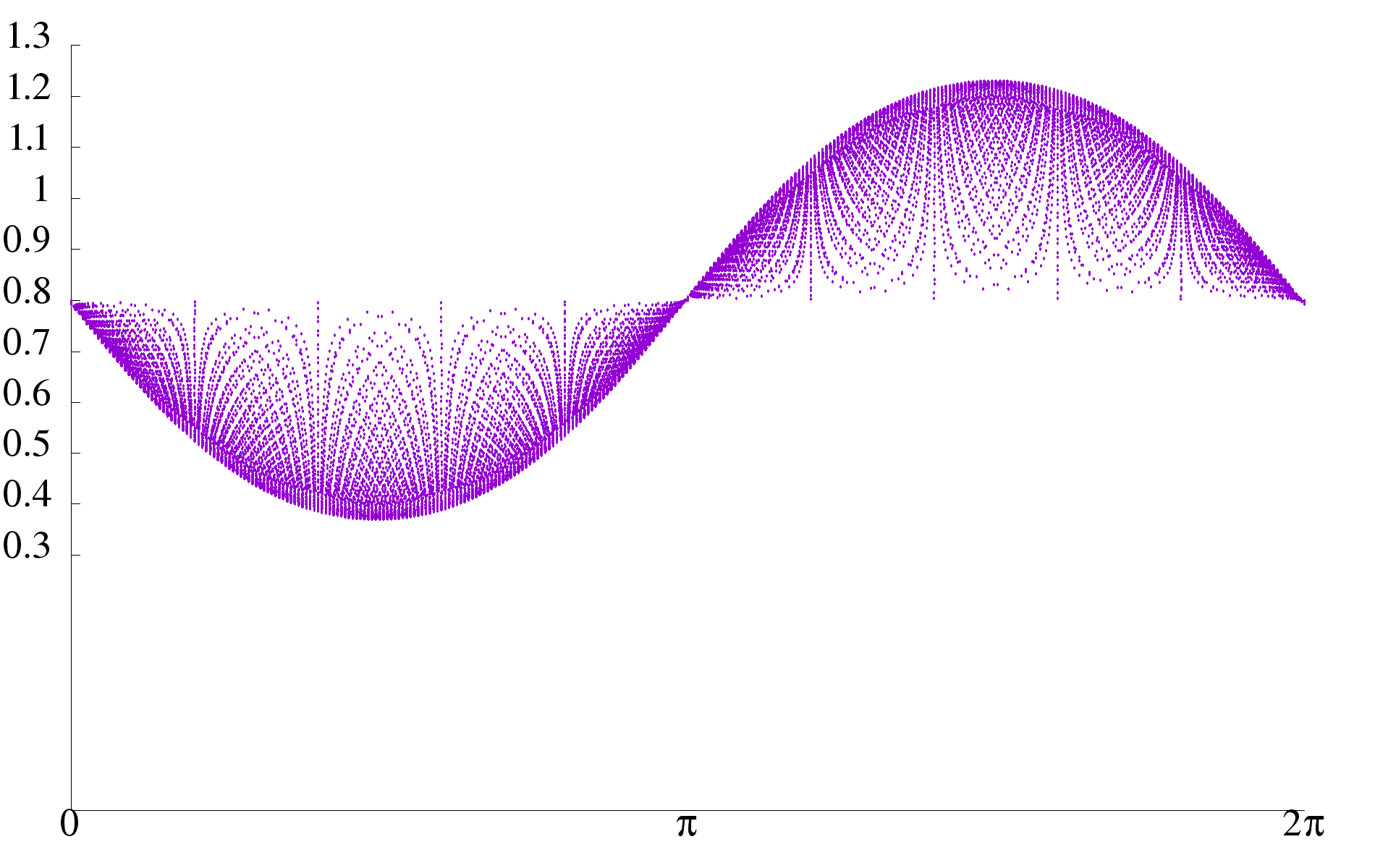} 
  \caption{}
  \label{fig:PAMVSourceLim43D}
\end{subfigure}
\caption{Solid body rotation wind field, moving vortex initial scalar field,a)-d) - plot viewing angle $(90^\circ, 0^\circ)$, $t=T/2$:
                                   a) ICON FFSL, limiter ~ \cite{zalesak1979fully}, \cite{schar1996synchronous}
                                   b) standart adjoint
                                   c) art. source adjoint, limiter ~ \cite{zalesak1979fully}, \cite{schar1996synchronous}
                                   d) art. source adjoint, limiter ~\cite{zalesak1979fully}, \cite{harris2011flux}
                                   }
\label{fig:MovingVortexAtT/2}
\end{figure}

    \clearpage
\subsubsection{Tables for advection tests}
\label{A:AdvTestTables}

\begin{table}[h!]\small
	\centering
	\caption{Solid body rotation, cosine bell, grid R2B04, $T=1036800[s]$}
        \scalebox{0.5}{
	 \begin{tabular}{c c c c c c c c}
			\hline
\textbf{Error norms} &\textbf{ICON-FFSL,}  &\textbf{Standard adjoint,} &\textbf{Art. source adjoint,} &\textbf{ICON-FFSL,}                                                    &\textbf{Art. source adjoint,}                                          &\textbf{ICON-FFSL,}                                              &\textbf{Art. source adjoint,} \\ 
                     &\textbf{no limiter} &\textbf{no limiter}       &\textbf{no limiter}          &\textbf{limiter \cite{zalesak1979fully}, \cite{schar1996synchronous}} &\textbf{limiter \cite{zalesak1979fully}, \cite{schar1996synchronous}} &\textbf{limiter \cite{zalesak1979fully}, \cite{harris2011flux}} &\textbf{limiter \cite{zalesak1979fully}, \cite{harris2011flux}} \\
\hline
\hline
$l_{1,rel}$      & 2.230227E-02  & 4.617754E-02  & 2.230230E-02  & 1.399967E-02  & 1.399982E-02  & 1.358807E-02  & 1.358786E-02 \\
$l_{2,rel}$      & 1.264162E-02  & 2.890380E-02  & 1.264118E-02  & 1.073500E-02  & 1.073535E-02  & 1.297653E-02  & 1.297682E-02 \\
$l_{\infty,rel}$ & 1.145905E-02  & 2.296593E-02  & 1.145946E-02  & 1.516152E-02  & 1.516201E-02  & 2.414110E-02  & 2.414264E-02 \\
$l_{1,abs}$      & 3.737112      & 7.697128      & 3.737118      & 2.330792      & 2.330815      & 2.247145      & 2.247110     \\
$l_{2,abs}$      & 1.245850E-01  & 2.838235E-01  & 1.245807E-01  & 1.055754E-01  & 1.055789E-01  & 1.269004E-01  & 1.269032E-01 \\
$l_{\infty,abs}$ & 1.134216E-02  & 2.273165E-02  & 1.134256E-02  & 1.500686E-02  & 1.500734E-02  & 2.389484E-02  & 2.389635E-02 \\
\hline
Undershoot       & 9993          & 10007         & 9993          &  0            & 0             &  0            & 89           \\
Minimum          & -1.081394E-02 & -1.164300E-02 & -1.081451E-02 &  0            & 0             &  0            & -5.023336E-11\\
Overshoot        & 0             & 2             & 0             &  4            & 4             &  4            & 4            \\
Maximum          & 9.897216E-01  & 9.952338E-01  & 9.897270E-01  &  9.909216E-01 & 9.909271E-01  &  9.905530E-01 & 9.905582E-01 \\
\hline
\hline
		\end{tabular}} 
	\label{tab:SolidBodyRotationcos}	
\end{table}			
\vfill					
\begin{table}[h!]
	\centering
	\caption{Solid body rotation, slotted cylinder, grid R2B04, $T=1036800[s]$}
        \scalebox{0.5}{
	 \begin{tabular}{c c c c c c c c}
			\hline
\textbf{Error norms} &\textbf{ICON-FFSL,}  &\textbf{Standard adjoint,} &\textbf{Art. source adjoint,} &\textbf{ICON-FFSL,}                                                    &\textbf{Art. source adjoint,}                                          &\textbf{ICON-FFSL,}                                              &\textbf{Art. source adjoint,} \\ 
                     &\textbf{no limiter} &\textbf{no limiter}       &\textbf{no limiter}          &\textbf{limiter \cite{zalesak1979fully}, \cite{schar1996synchronous}} &\textbf{limiter \cite{zalesak1979fully}, \cite{schar1996synchronous}} &\textbf{limiter \cite{zalesak1979fully}, \cite{harris2011flux}} &\textbf{limiter \cite{zalesak1979fully}, \cite{harris2011flux}} \\
\hline
\hline					
$l_{1,rel}$       & 3.069264E-01  & 3.171004E-01  & 3.069262E-01  & 2.522601E-01  & 2.522586E-01 & 2.599566E-01  & 2.599549E-01 \\
$l_{2,rel}$       & 2.673495E-01  & 2.693426E-01  & 2.673488E-01  & 2.676666E-01  & 2.676659E-01 & 2.736425E-01  & 2.736418E-01 \\
$l_{\infty,rel}$  & 7.106978E-01  & 6.999076E-01  & 7.107034E-01  & 8.149918E-01  & 8.150041E-01 & 8.057034E-01  & 8.057114E-01 \\ 
$l_{1,abs}$       & 3.157555E+02  & 3.263074E+02  & 3.157552E+02  & 2.593519E+02  & 2.593502E+02 & 2.669324E+02  & 2.669305E+02 \\
$l_{2,abs}$       & 8.582671      & 8.649268      & 8.582646      & 8.586382      & 8.586358     & 8.773138      & 8.773115 \\
$l_{\infty,abs}$  & 7.106978E-01  & 6.999076E-01  & 7.107034E-01  & 8.149918E-01  & 8.150041E-01 & 8.057034E-01  & 8.057114E-01 \\		
\hline
Undershoot      & 9666          & 9649          &  9666         &  0            & 0            & 0             & 117    \\
Minimum           & -2.736402E-01 & -2.657430E-01 & -2.736436E-01 &  0            & 0            & 0             & -8.511675E-10 \\
Overshoot      & 435           & 431           & 435           &  415          & 415          & 414           & 414    \\
Maximum           & 1.2450956     & 1.246481      & 1.245110      & 1.284667      & 1.284693     & 1.327834      & 1.327850  \\
\hline	
\hline
		\end{tabular}} 
	\label{tab:SolidBodyRotationcyl}	
\end{table}
\vfill					
\begin{table}[h!]
	\centering
	\caption{Deformational flow, cosine bells, non-divergent velocity vector, grid R2B04, $T=1036800[s]$}
        \scalebox{0.5}{
	 \begin{tabular}{c c c c c c c c}
			\hline
\textbf{Error norms} &\textbf{ICON-FFSL,}  &\textbf{Standard adjoint,} &\textbf{Art. source adjoint,} &\textbf{ICON-FFSL,}                                                    &\textbf{Art. source adjoint,}                                          &\textbf{ICON-FFSL,}                                              &\textbf{Art. source adjoint,} \\ 
                     &\textbf{no limiter} &\textbf{no limiter}       &\textbf{no limiter}          &\textbf{limiter \cite{zalesak1979fully}, \cite{schar1996synchronous}} &\textbf{limiter \cite{zalesak1979fully}, \cite{schar1996synchronous}} &\textbf{limiter \cite{zalesak1979fully}, \cite{harris2011flux}} &\textbf{limiter \cite{zalesak1979fully}, \cite{harris2011flux}} \\
\hline
\hline
$l_{1,rel}$      & 3.256092E-02  & 4.011542E-02  & 3.256965E-02  & 2.342655E-02  & 2.341489E-02 & 1.895446E-02 & 1.894835E-02 \\
$l_{2,rel}$      & 2.464795E-02  & 2.847823E-02  & 2.463155E-02  & 2.138599E-02  & 2.136528E-02 & 1.901537E-02 & 1.899423E-02 \\
$l_{\infty,rel}$ & 3.967162E-02  & 4.035984E-02  & 3.958678E-02  & 3.179198E-02  & 3.170231E-02 & 2.966709E-02 & 2.957616E-02 \\	
$l_{1,abs}$      & 2.438983E+01  & 2.999117E+01  & 2.439604E+01  & 1.759734E+01  & 1.758865E+01 & 1.427368E+01 & 1.426911E+01 \\
$l_{2,abs}$      & 5.131036E-01  & 5.930658E-01  & 5.127594E-01  & 4.469183E-01  & 4.464925E-01 & 3.979740E-01 & 3.975392E-01 \\
$l_{\infty,abs}$ & 3.966944E-02  & 4.035763E-01  & 3.958461E-02  & 3.179024E-02  & 3.170058E-02 & 2.966546E-02 & 2.957454E-02 \\
\hline
Undershoot       & 9201          & 9209          & 9208          & 0             & 0            & 0            & 1167          \\
Minimum          & -2.355757E-02 & -2.397034E-02 & -2.360643E-02 & 0             & 0            & 0            & -8.922257E-10 \\
Overshoot        & 0             & 1             & 0             & 0             & 0            & 0            & 0             \\
Maximum          & 9.9958314E-01 & 1.004633      & 9.994996E-01  & 9.988715E-01  & 9.987885E-01 & 9.991847E-01 & 9.991020E-01  \\
\hline
\hline

		\end{tabular}} 
	\label{tab:DeformationalFlow1cos}	
\end{table}			
	
\vfill					
\begin{table}[h!]
	\centering
	\caption{Deformational flow, slotted cylinders, non-divergent velocity vector, grid R2B04, $T=1036800[s]$}
        \scalebox{0.5}{
	 \begin{tabular}{c c c c c c c c}
			\hline
\textbf{Error norms} &\textbf{ICON-FFSL,}  &\textbf{Standard adjoint,} &\textbf{Art. source adjoint,} &\textbf{ICON-FFSL,}                                                    &\textbf{Art. source adjoint,}                                          &\textbf{ICON-FFSL,}                                              &\textbf{Art. source adjoint,} \\ 
                     &\textbf{no limiter} &\textbf{no limiter}       &\textbf{no limiter}          &\textbf{limiter \cite{zalesak1979fully}, \cite{schar1996synchronous}} &\textbf{limiter \cite{zalesak1979fully}, \cite{schar1996synchronous}} &\textbf{limiter \cite{zalesak1979fully}, \cite{harris2011flux}} &\textbf{limiter \cite{zalesak1979fully}, \cite{harris2011flux}} \\
\hline
\hline			
$l_{1,rel}$      & 2.890181E-01   & 2.888711E-01  & 2.889820E-01  & 2.780968E-01  & 2.780884E-01  & 2.738582E-01  & 2.738496E-01 \\
$l_{2,rel}$      & 2.940940E-01   & 2.942568E-01  & 2.940659E-01  & 3.042498E-01  & 3.042283E-01  & 3.015322E-01  & 3.015094E-01 \\
$l_{\infty,rel}$ & 9.425960E-01   & 9.190518E-01  & 9.412042E-01  & 9.509492E-01  & 9.504477E-01  & 9.468321E-01  & 9.464949E-01 \\
$l_{1,abs}$      & 6.834999E+02   & 6.832123E+02  & 6.834093E+02  & 6.576075E+02  & 6.575934E+02  & 6.475465E+02  & 6.475301E+02 \\
$l_{2,abs}$      & 1.430822E+01   & 1.431595E+01  & 1.430670E+01  & 1.480470E+01  & 1.480354E+01  & 1.467501E+01  & 1.467376E+01 \\
$l_{\infty,abs}$ & 9.425960E-01   & 9.190518E-01  & 9.412042E-01  & 9.509492E-01  & 9.504477E-01  & 9.468321E-01  & 9.464949E-01 \\
\hline
Undershoot       & 8886           & 8879          & 8890          &  0            & 0             & 0             & 181           \\
Minimum          & -9.581544E-02  & -1.741285E-01 & -9.600165E-02 &  0            & 0             & 0             & -1.739492E-09 \\
Overshoot        & 910            & 910           & 909           & 796           & 796           & 837           & 837            \\
Maximum          & 1.193710       & 1.202044      & 1.193843      & 1.169024      & 1.169178      & 1.183066      & 1.183258       \\
\hline
\hline   
		\end{tabular}} 
	\label{tab:DeformationalFlow1cyl}	
\end{table}
\vfill					
\begin{table}[h!]
	\centering
	\caption{Deformational flow, cosine bells, divergent velocity vector, grid R2B04, $T=1036800[s]$}
        \scalebox{0.5}{
		\begin{tabular}{c c c c c}
			\hline
			\textbf{Error norms} &\textbf{Standard adjoint,} &\textbf{Art. source adjoint,} &\textbf{Art. source adjoint,}                                          &\textbf{Art. source adjoint,} \\
			                     &\textbf{no limiter}       &\textbf{no limiter}          &\textbf{limiter \cite{zalesak1979fully}, \cite{schar1996synchronous}} &\textbf{limiter \cite{zalesak1979fully}, \cite{harris2011flux}} \\
			\hline
			\hline  
			$l_{1,rel}$      & 4.029138E-002  & 3.311189E-002  & 1.753717E-002  & 1.627544E-002 \\
			$l_{2,rel}$      & 3.351494E-002  & 2.831085E-002  & 1.925266E-002  & 1.848417E-002 \\
			$l_{\infty,rel}$ & 5.717197E-002  & 4.793155E-002  & 3.574307E-002  & 3.388867      \\	
			$l_{1,abs}$      & 3.013273E+001  & 2.479282E+001  & 1.305616E+001  & 1.210955E+001 \\
			$l_{2,abs}$      & 6.964130E-001  & 5.885914E-001  & 3.994988E-001  & 3.835561E-001 \\
			$l_{\infty,abs}$ & 5.710458E-002  & 4.787505E-002  & 3.570094E-002  & 3.384873E-001\\		
			\hline
			Undershoot       & 9187           & 9189           & 0              & 7135 \\
			Minimum          & -3.020335E-002 & -2.661911E-002 & 0              & -1.572671E-008 \\
			Overshoot        & 0              & 0              & 0              & 0   \\
			Maximum          & 9.955241E-001  & 9.937946E-001  & 9.933643E-001  & 9.933694E-001 \\
			\hline
			\hline
		\end{tabular}}  
	\label{tab:DeformationalFlow2cos}	
\end{table}	
\vfill				
\begin{table}[h!]
	\centering
	\caption{Deformational flow, slotted cylinders, divergent velocity vector, grid R2B04, $T=1036800[s]$}
        \scalebox{0.5}{
		\begin{tabular}{c c c c c}
			\hline
			\textbf{Error norms} &\textbf{Standard adjoint,} &\textbf{Art. source adjoint,} &\textbf{Art. source adjoint,}                                          &\textbf{Art. source adjoint,} \\
			                     &\textbf{no limiter}       &\textbf{no limiter}          &\textbf{limiter \cite{zalesak1979fully}, \cite{schar1996synchronous}} &\textbf{limiter \cite{zalesak1979fully}, \cite{harris2011flux}} \\
			\hline
			\hline 		 	
			$l_{1,rel}$      & 3.201877E-001  & 3.072660E-01  & 2.959957E-01  & 2.911967E-01 \\
			$l_{2,rel}$      & 3.004045E-001  & 2.950234E-01  & 3.033951E-01  & 3.008232E-01 \\
			$l_{\infty,rel}$ & 8.202367E-001  & 8.689133E-01  & 8.527525E-01  & 8.511831E-01 \\
			$l_{1,abs}$      & 6.792025E+002  & 6.511061E+02  & 6.267315E+02  & 6.163679E+02 \\
			$l_{2,abs}$      & 1.383153E+001  & 1.357617E+01  & 1.396507E+01  & 1.384493E+01 \\
			$l_{\infty,abs}$ & 8.202369E-001  & 8.689133E-01  & 8.527525E-01  & 8.511831E-01 \\
			\hline
			Undershoot       & 8935           & 8867           & 14             & 7445 \\
			Minimum          & -1.334238E-001 & -1.173093E-01  & -4.796964E-15  & -4.019092E-09\\
                        Overshoot        & 858            & 839            & 791            & 801\\
			Maximum          & 1.200032       & 1.185067       & 1.177634       & 1.179568  \\
			\hline
			\hline
		\end{tabular}}  
	\label{tab:DeformationalFlow2cyl}	
\end{table}
\vfill	
\begin{table}[h!]
	\centering
	 \caption{Moving vortices, grid R2B04, $T=1036800[s]$}
         \scalebox{0.5}{
            \begin{tabular}{c c c c c}
	      \hline	
	      \textbf{Error norms} & \textbf{Standard adjoint,} &\textbf{Art. source adjoint,} & \textbf{Art. source adjoint,} & \textbf{Art. source adjoint,} \\ 
	                           & \textbf{no limiter}        & \textbf{no limiter}  & \textbf{limiter \cite{zalesak1979fully}, \cite{schar1996synchronous}} & \textbf{limiter \cite{zalesak1979fully}, \cite{harris2011flux}} \\
	      \hline
	      \hline
	      $l_{1,rel}$       & 2.412546E-02 & 1.411150E-03 & 1.411459E-03 & 1.411150E-03 \\
	      $l_{2,rel}$       & 4.083858E-02 & 3.874989E-03 & 3.875515E-03 & 3.874989E-03 \\
 	      $l_{\infty,rel}$  & 2.048717E-01 & 3.017727E-02 & 3.017801E-02 & 3.017727E-02 \\ 
              $l_{1,abs}$       & 5.084637E+02 & 2.881504E+01 & 2.882144E+01 & 2.881504E+01 \\
	      $l_{2,abs}$       & 6.294867     & 5.789490     & 5.790304     & 5.789490E-01 \\
	      $l_{\infty,abs}$  & 3.148948E-01 & 4.638326E-02 & 4.638440E-02 & 4.638326E-02 \\	  
	      \hline
	      Undershoot        & 44          & 1            & 1            & 1            \\
	      Minimum           & 4.483181E-01 & 4.629725E-01 & 4.629725E-01 & 4.629725E-01 \\
	      Overshoot         & 259          & 1            & 1            & 1            \\
	      Maximum           & 1.613474     & 1.537027     & 1.537027     & 1.537027     \\
	      \hline
	      \hline	
            \end{tabular}} 
	\label{tab:MovingVortices}	
   \end{table}
\vfill	   
   \clearpage
\subsection{Numerical results of data assimilation tests}
\label{A:AssimNumerical}

\vspace{-3cm}
\subsubsection{Figures for data assimilation tests}
\label{A:DataAssimTestPlots}

\begin{figure}[b]
\begin{subfigure}{0.5\textwidth}
  \centering
  \includegraphics[width=\textwidth]{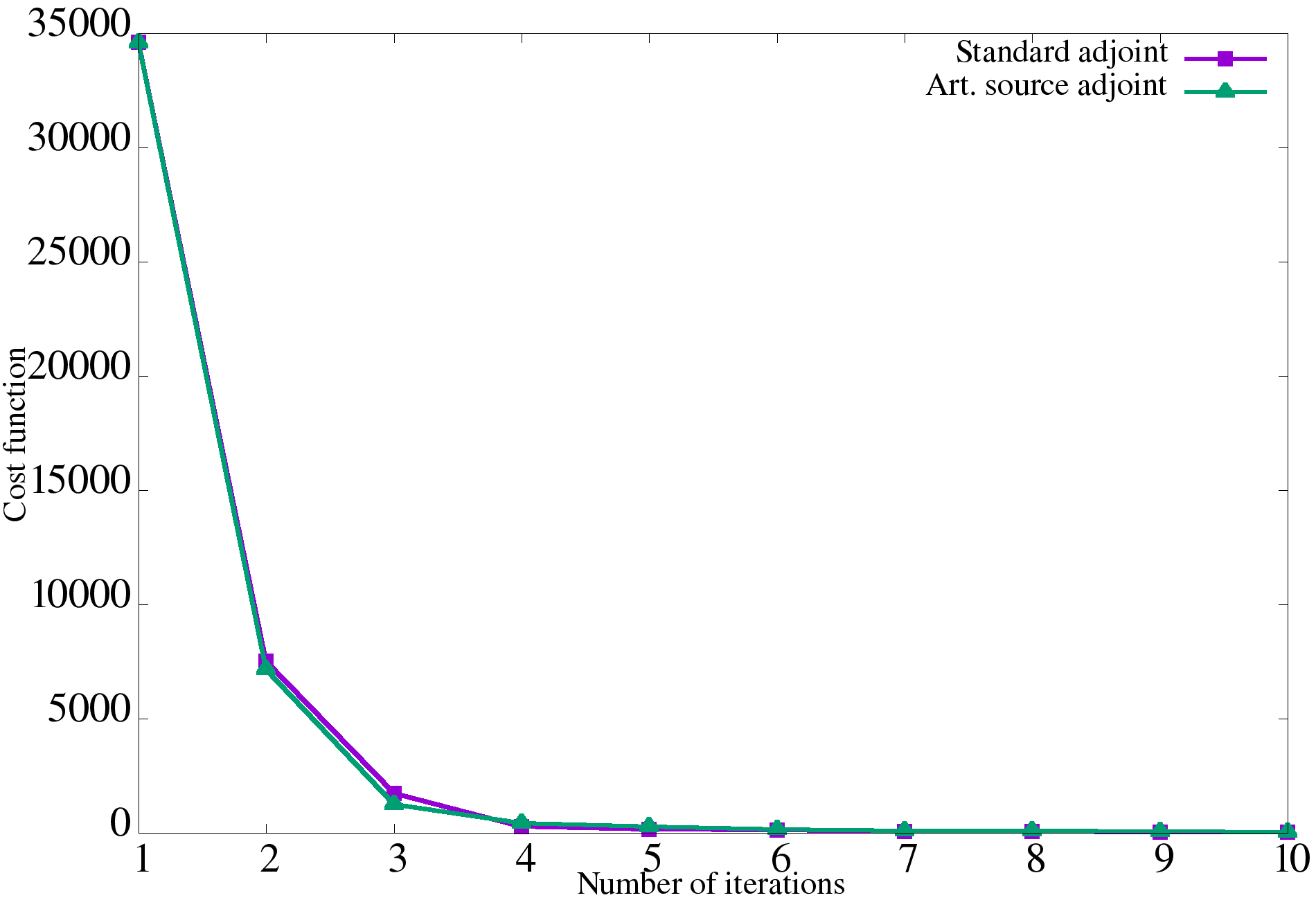}
  \caption{}
  \label{fig:DF3cos5120_cost_it300_1-10}
\end{subfigure}%
\begin{subfigure}{0.5\textwidth}
  \centering
  \includegraphics[width=\textwidth]{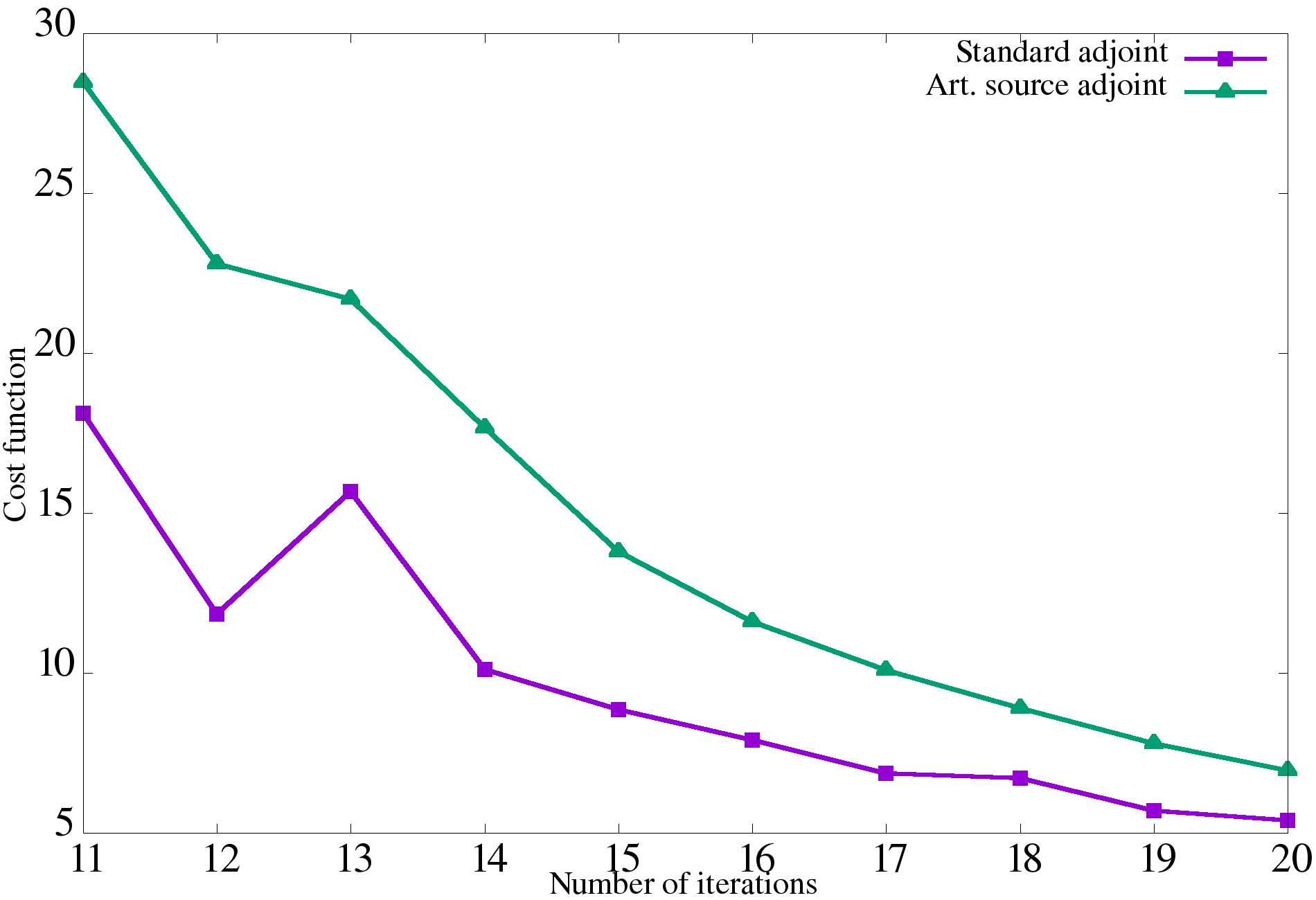}
  \caption{}
  \label{fig:DF3cos5120_cost_it300_11-20}
\end{subfigure}
\begin{subfigure}{0.5\textwidth}
  \centering
  \includegraphics[width=\textwidth]{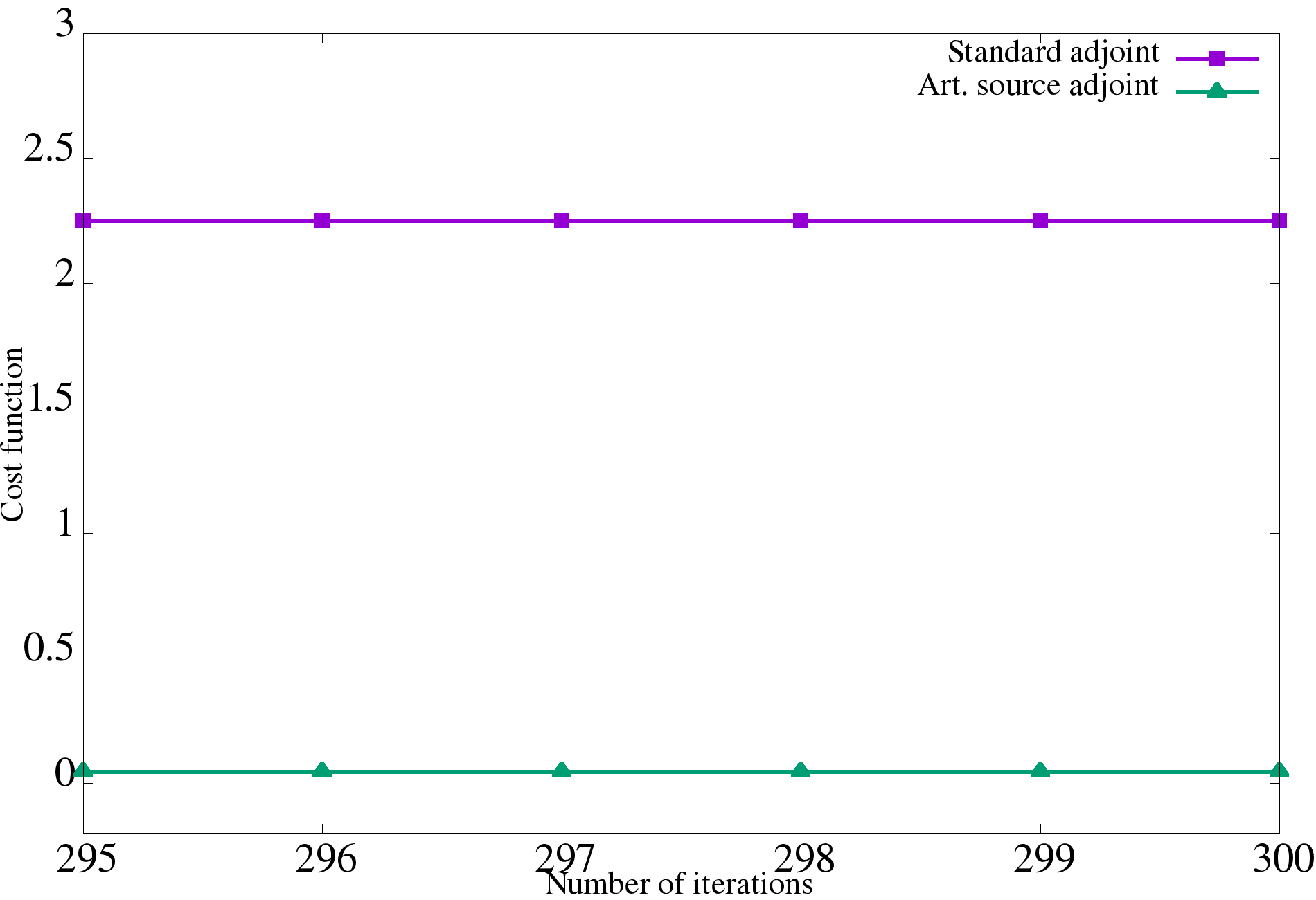}
  \caption{}
  \label{fig:DF3cos5120_cost_it300_295-300}
\end{subfigure}
\caption{Effect of number of iterations, deformational flow with cosine bells, $R2B4$ grid, 5120 observation points, number of iterations on abscissa, cost function on ordinate 
                                           a) iterations from 1 to 10 
                                           b) iterations from 11 to 20
                                           c) iterations from 295 to 300
                                           }
\label{fig:DFcos5120_it300}
\end{figure}

\begin{figure}[h]
\begin{subfigure}{0.5\textwidth}
  \centering
  \includegraphics[width=\textwidth]{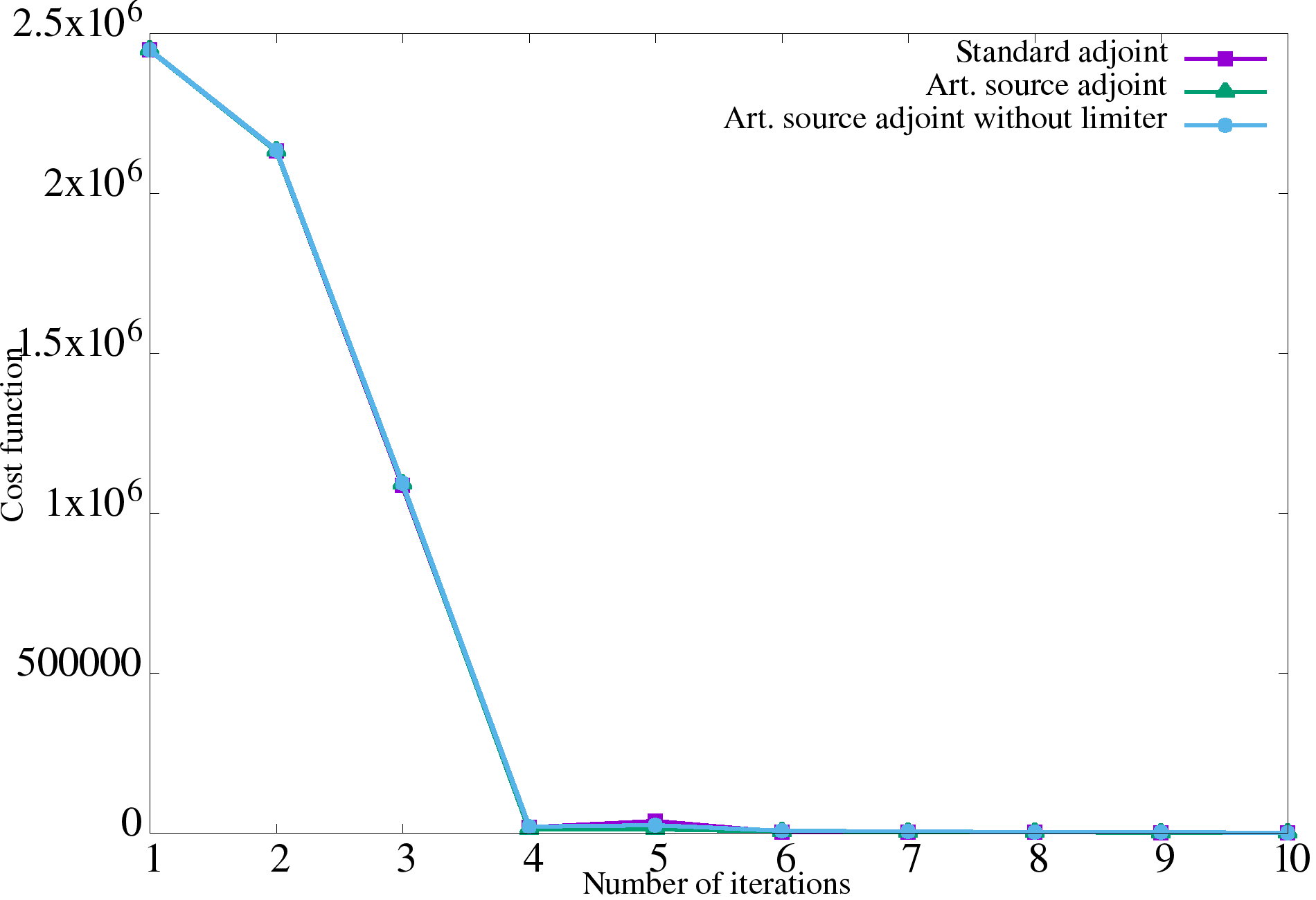}
  \caption{}
  \label{fig:MV5120_cost_it300_1-10}
\end{subfigure}%
\begin{subfigure}{0.5\textwidth}
  \centering
  \includegraphics[width=\textwidth]{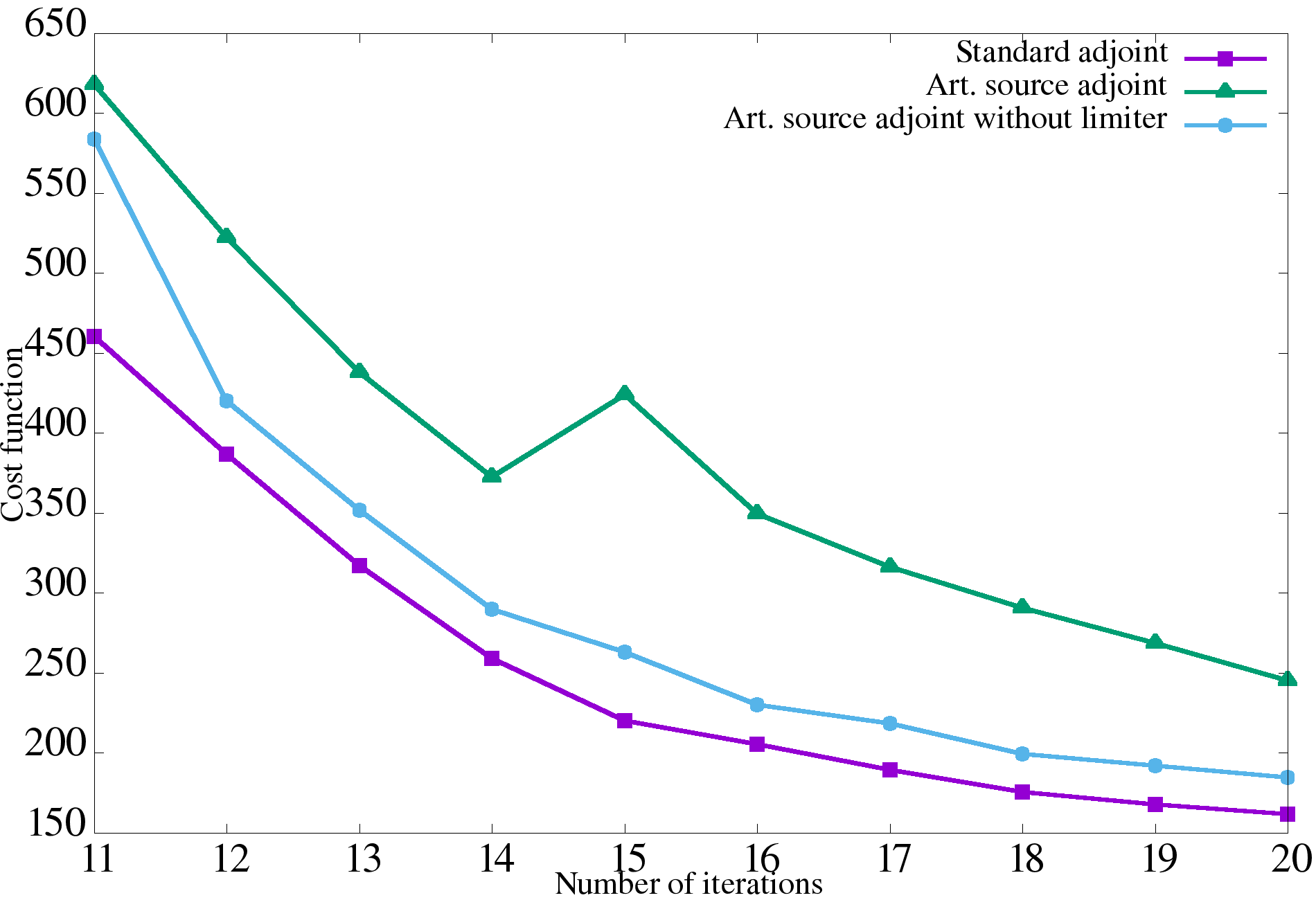}
  \caption{}
  \label{fig:MV5120_cost_it300_11-20}
\end{subfigure}
\begin{subfigure}{0.5\textwidth}
  \centering
  \includegraphics[width=\textwidth]{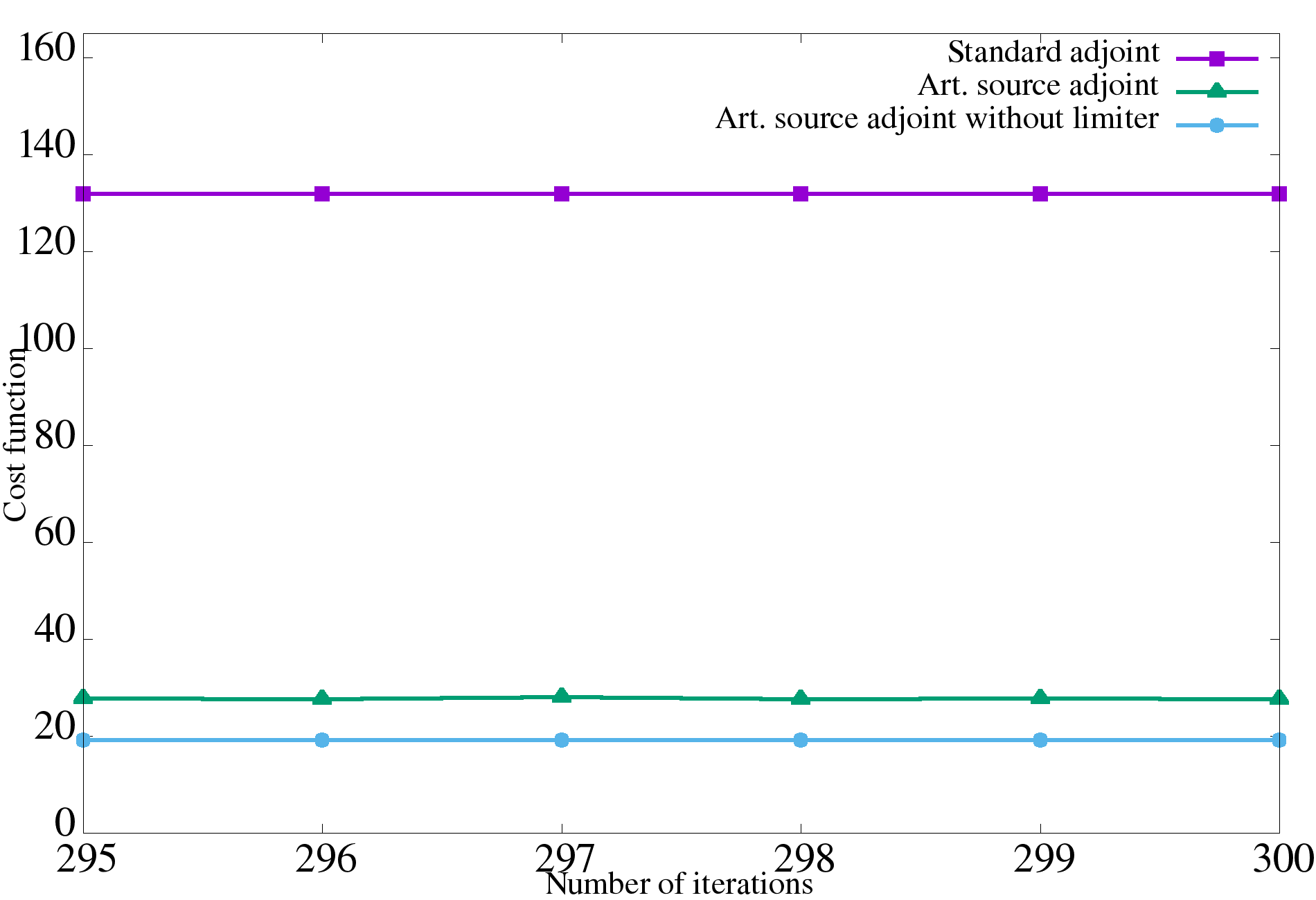}
  \caption{}
  \label{fig:MV5120_cost_it300_295-300}
\end{subfigure}%
\begin{subfigure}{.5\textwidth}
  \centering
  \includegraphics[width=\textwidth]{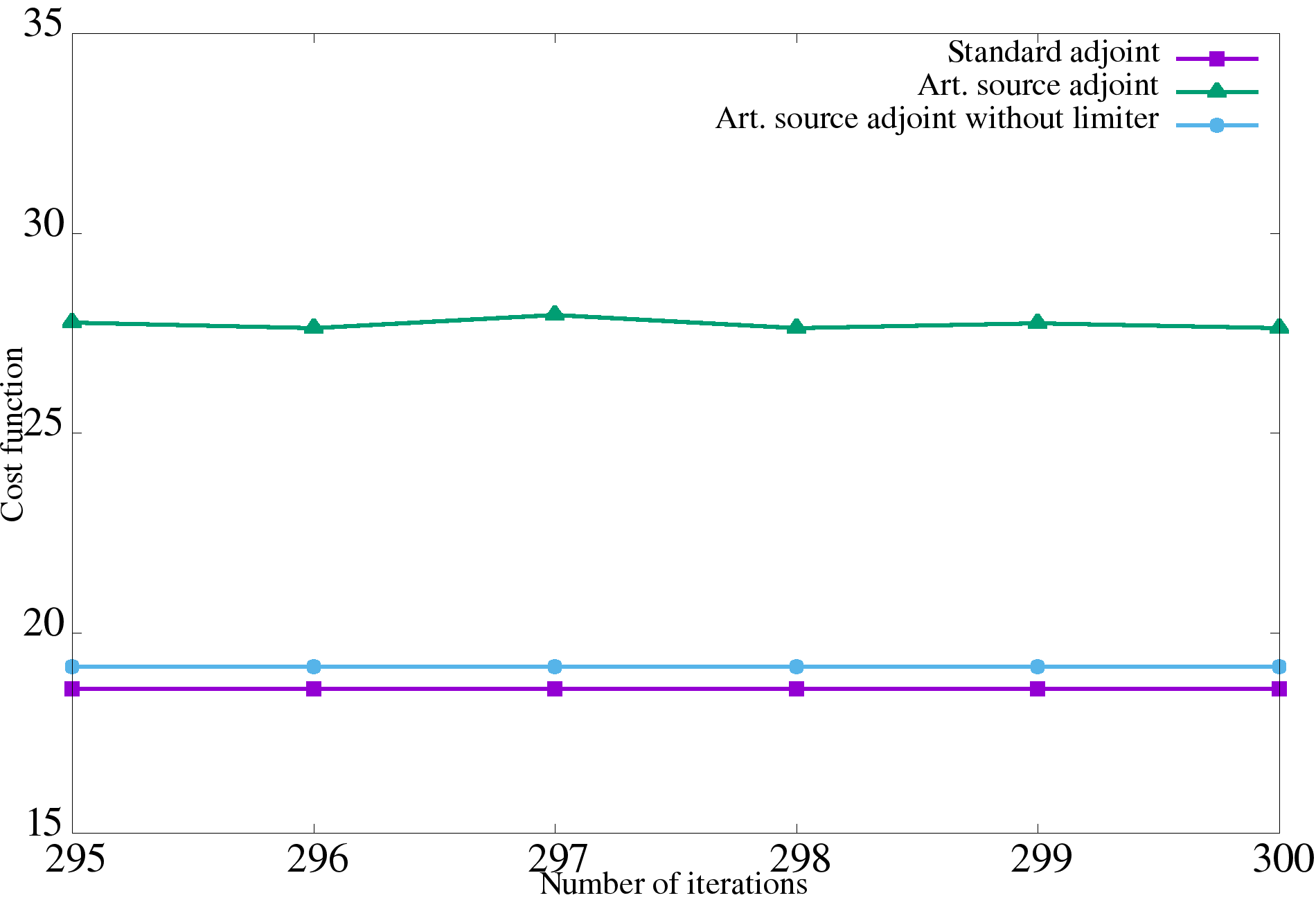}
  \caption{}
  \label{fig:MV5120_obscost_it300_295-300}
\end{subfigure}
\begin{subfigure}{.5\textwidth}
  \centering
  \includegraphics[width=\textwidth]{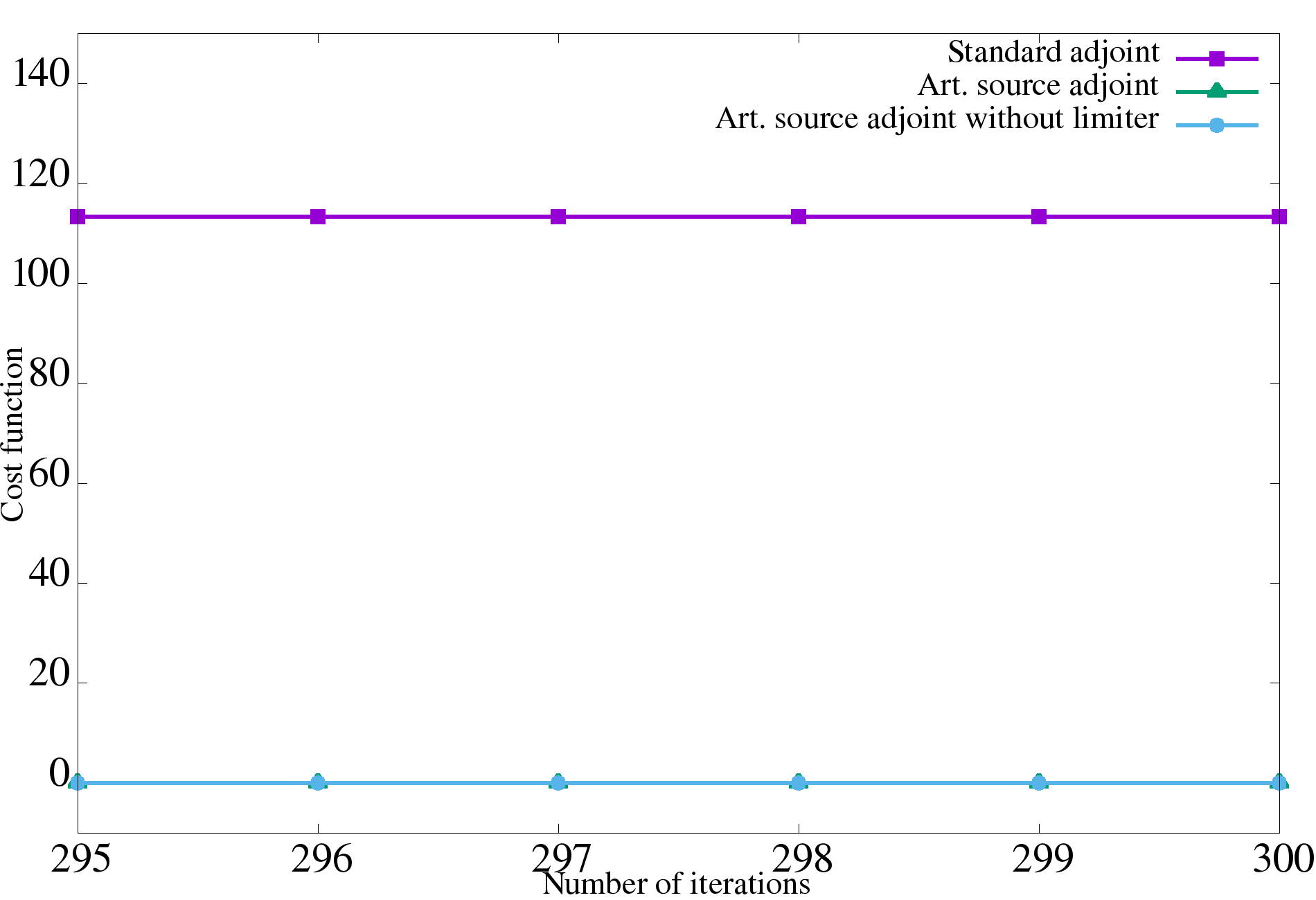}
  \caption{}
  \label{fig:MV5120_bcost_it300_295-300}
\end{subfigure}
\caption{Effect of number of iterations, moving vortex, $R2B4$ grid, 5120 observation points, number of iterations on abscissa, cost function on ordinate  
                                           a) iterations from 1 to 10 
                                           b) iterations from 11 to 20
                                           c) iterations from 295 to 300
                                           d) background term only, iterations from 295 to 300
                                           e) observation term only, iterations from 295 to 300
                                           }
\label{fig:MV5120_it300}
\end{figure}

\begin{figure}[h]
\begin{subfigure}{0.5\textwidth}
  \centering
  \includegraphics[width=\textwidth]{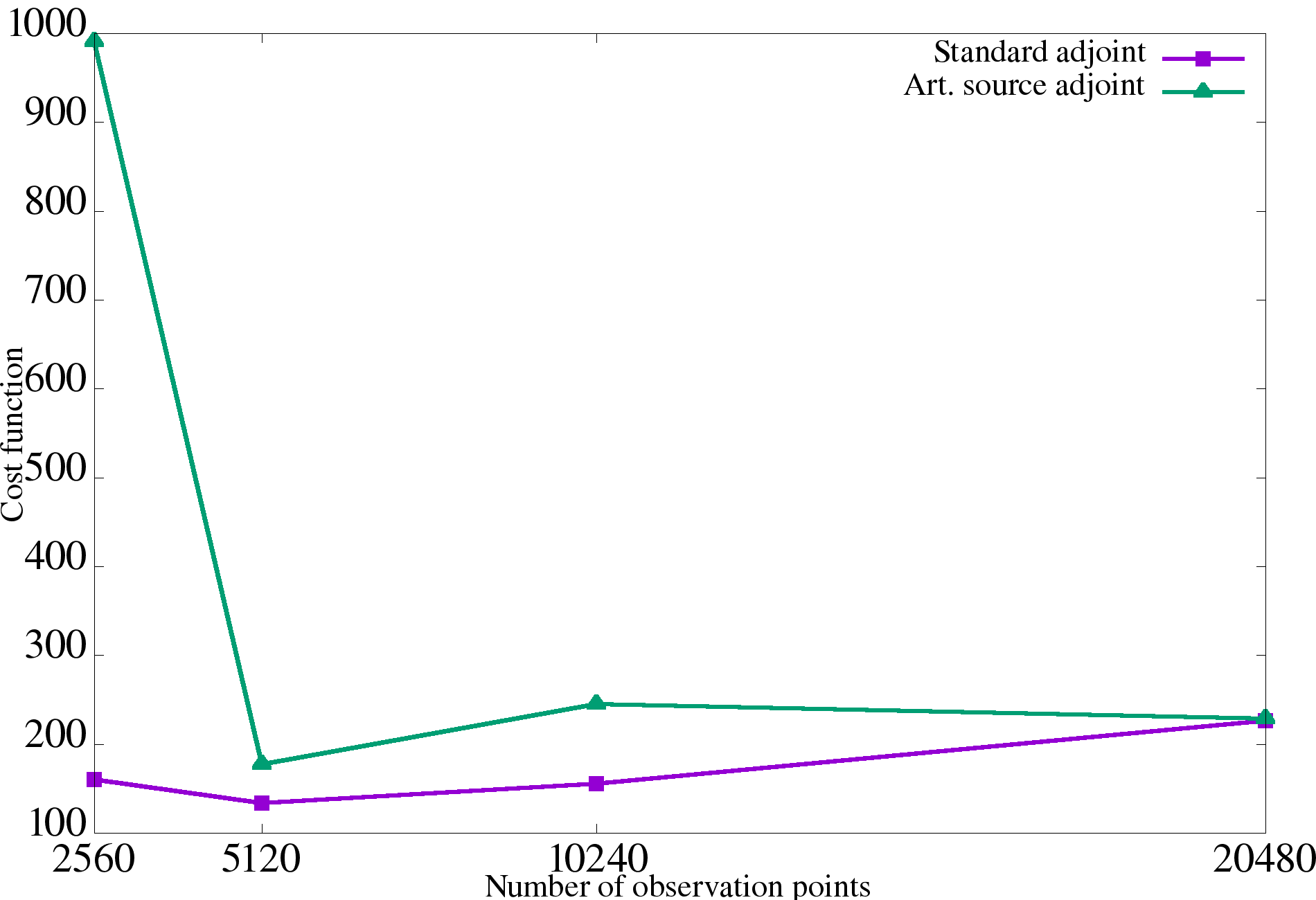}
  \caption{}
  \label{fig:MV_R2B4_cost_it50}
\end{subfigure}%
\begin{subfigure}{0.5\textwidth}
  \centering
  \includegraphics[width=\textwidth]{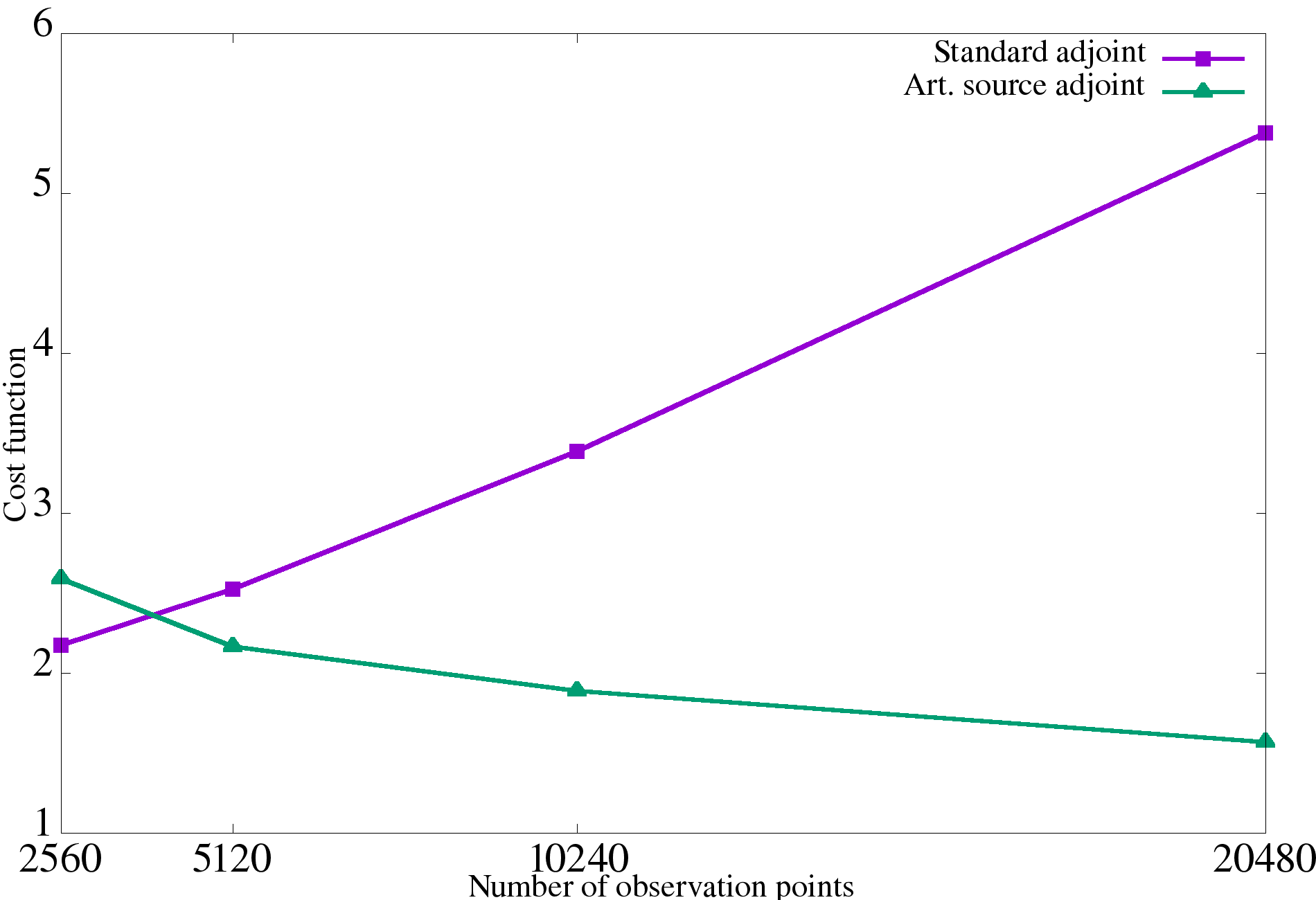}
  \caption{}
  \label{fig:DF3cos_R2B4_cost_it50}
\end{subfigure}
\begin{subfigure}{0.5\textwidth}
  \centering
  \includegraphics[width=\textwidth]{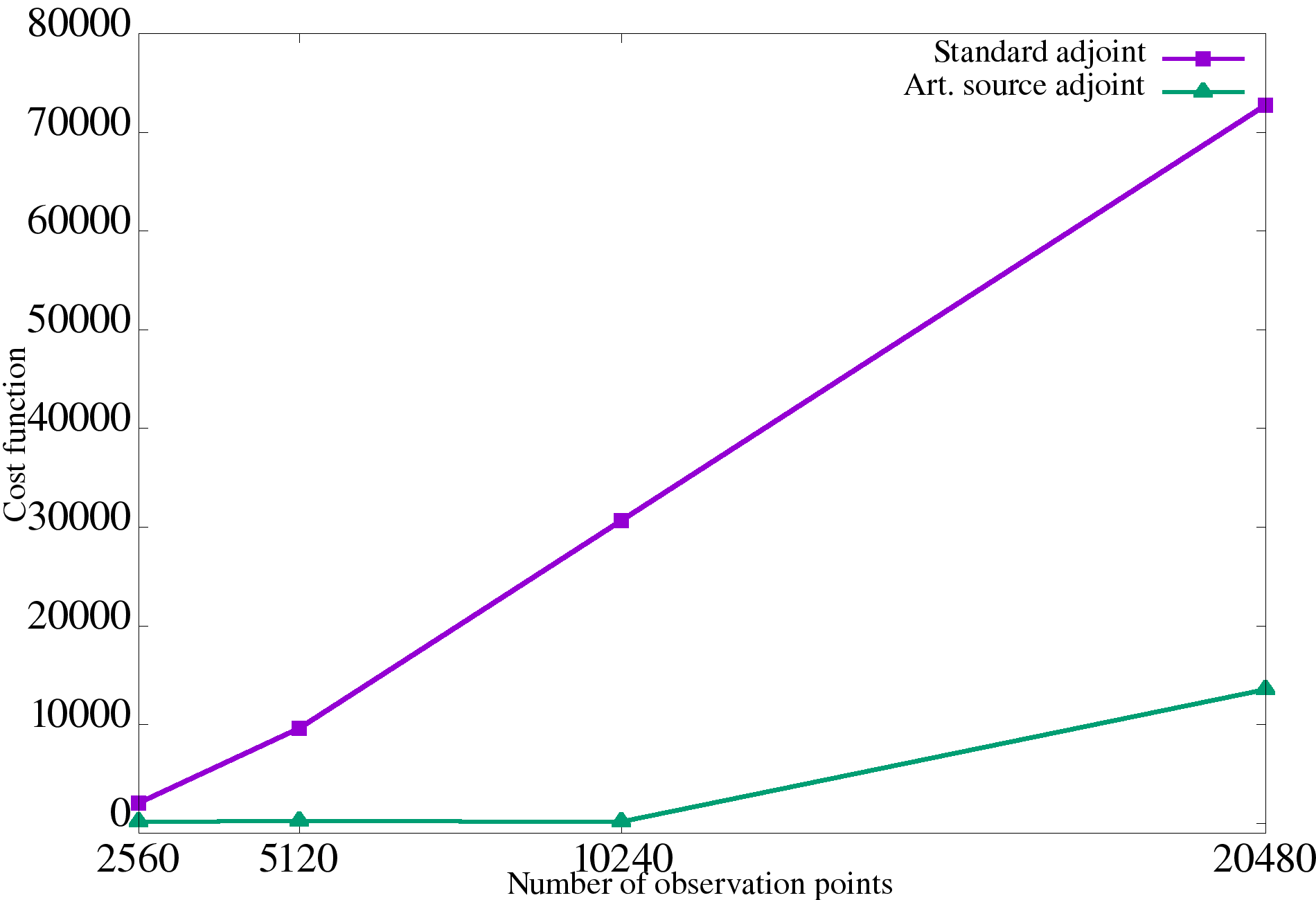}
  \caption{}
  \label{fig:DF3cyl_R2B4_cost_it50}
\end{subfigure}
\caption{Effect of observations, $R2B4$ grid, number of observation points on abscissa, cost function on ordinate 
                                           a) moving vortex
                                           b) deformational flow, cosine bells
                                           c) deformational flow, slotted cylinders
                                           }
\label{fig:R2B4_it50}
\end{figure}

\begin{figure}[h]
\begin{subfigure}{0.5\textwidth}
  \centering
  \includegraphics[width=\textwidth]{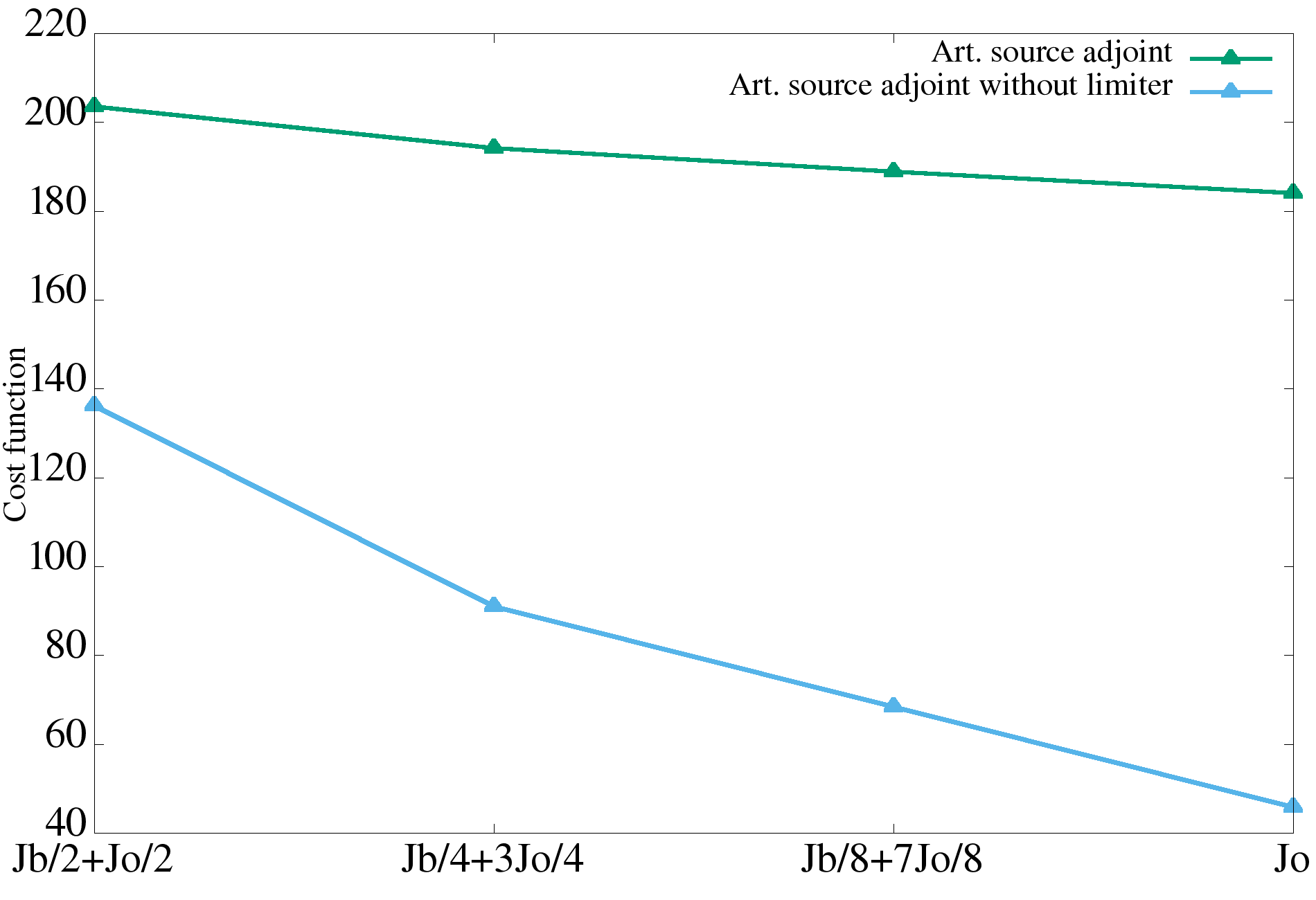}
  \caption{}
  \label{fig:MV_weights}
\end{subfigure}%
\begin{subfigure}{0.5\textwidth}
  \centering
  \includegraphics[width=\textwidth]{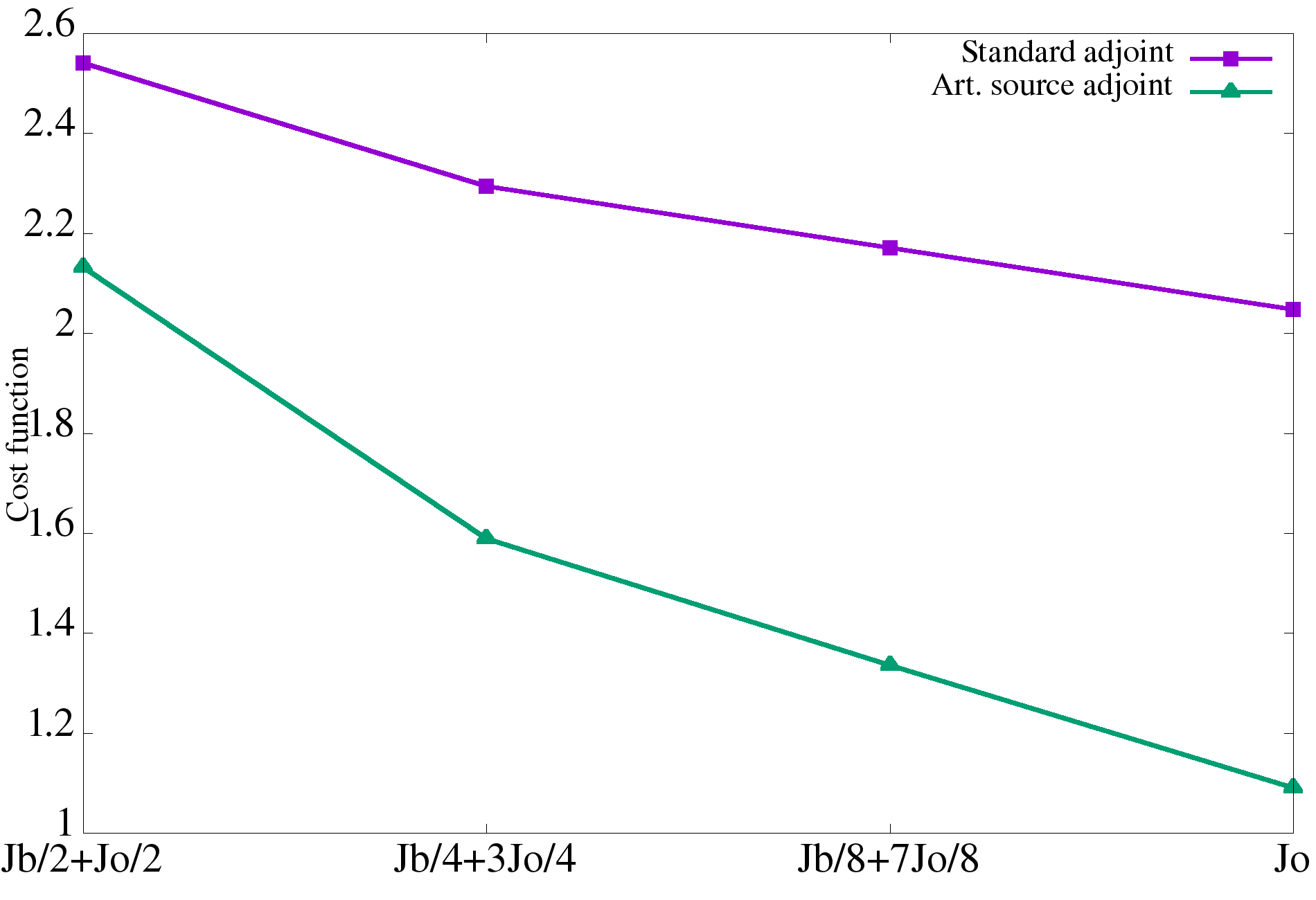}
  \caption{}
  \label{fig:DF3cos_weights}
\end{subfigure}
\begin{subfigure}{0.5\textwidth}
  \centering
  \includegraphics[width=\textwidth]{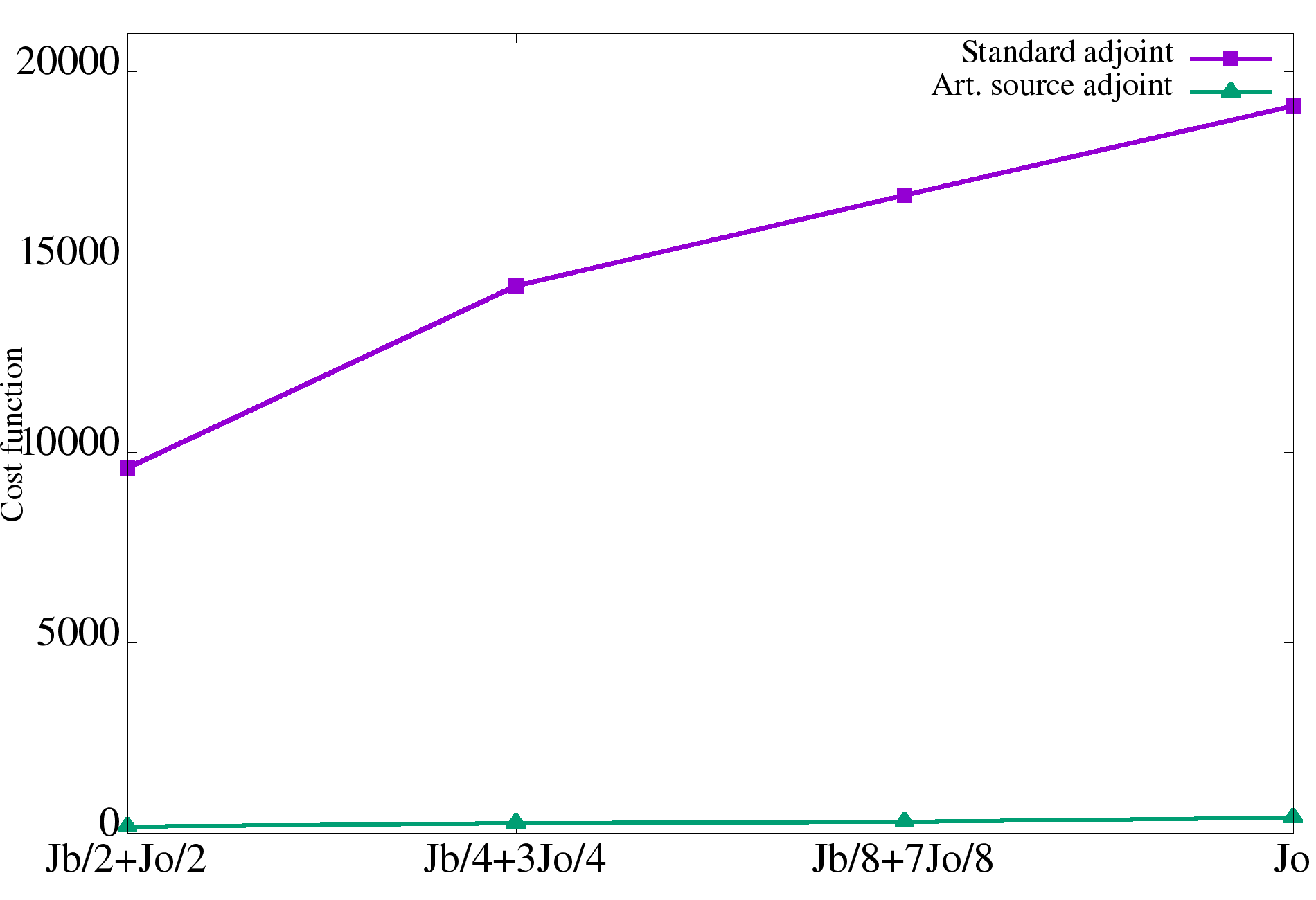}
  \caption{}
  \label{fig:DF3cyl_weights}
\end{subfigure}
\caption{Effect of weights of background and observation terms, $R2B4$ grid, 5120 observation points, cost function with 
         different weights for background and observation term on abscissa, value of cost function on ordinate
                                           a) moving vortex
                                           b) deformational flow, cosine bells
                                           c) deformational flow, slotted cylinders
                                           }
\label{fig:DaWeights}
\end{figure}

\clearpage
\subsubsection{Tables for data assimilation tests}
\label{A:DataAssimTestTables}

\begin{table}[h!]
	\centering
	\caption{Error in initial scalar field before and after assimilation. Cost function $J=J_o$, grid $R2B04$, 5120 observation points, 50 iterations}	
\begin{adjustbox}{width=1\linewidth}		
\begin{tabular}{>{\centering}p{0.1cm} >{\centering}p{0.1cm}  c c c c}
	\hline	
	$\bm{v}$ & $\bm{q_0}$ &\textbf{Error norms} & \textbf{Initial error} & \textbf{Standard adjoint} &\textbf{Art. source adjoint} \\ 
	\hline
	3 & 4 & $l_{1,rel}$       & 1.69E-01   & 1.79E-02 & 1.93E-02 \\
	  &   & $l_{2,rel}$       & 8.39E-02   & 1.45E-02 & 1.94E-02 \\
	  &   & $l_{\infty,rel}$  & 1.0E-01    & 1.64E-02 & 5.45E-02 \\ 
	  &   & $l_{1,abs}$       & 1.27E+02   & 1.36E+01 & 1.48E+01 \\
	  &   & $l_{2,abs}$       & 1.75       & 3.06E-01 & 4.14E-01 \\
	  &   & $l_{\infty,abs}$  & 1.0E-01    & 1.64E-02 & 5.45E-02\\	  
	\hline
	3 & 5 & $l_{1,rel}$       & 9.32E-02  & 1.42E-01 & 1.86E-02 \\
	  &   & $l_{2,rel}$       & 7.37E-02  & 1.58E-01 & 2.0E-02 \\
	  &   & $l_{\infty,rel}$  & 1.0E-01   & 8.7E-01  & 1.38E-01 \\ 
	  &   & $l_{1,abs}$       & 1.98E+02  & 3.02E+02 & 3.98E+01 \\
	  &   & $l_{2,abs}$       & 3.39      & 7.3      & 9.22E-01 \\
	  &   & $l_{\infty,abs}$  & 1.0E-01   & 8.7E-01  & 1.38E-01 \\
	\hline
	\hline	
	$\bm{v}$ & $\bm{q_0}$ &\textbf{Error norms} & \textbf{Initial error} & \textbf{Art. source adjoint} &\textbf{Art. source adjoint} \\ 
	         &            &                     &                        & \textbf{without limiter}     &                             \\
	\hline         
	4 & 3 & $l_{1,rel}$       & 1.0E-01    & 1.09E-03 & 2.4E-03 \\
	  &   & $l_{2,rel}$       & 1.0E-01    & 3.55E-03 & 4.4E-03 \\
	  &   & $l_{\infty,rel}$  & 1.0E-01    & 1.0E-01  & 1.99E-01 \\ 
	  &   & $l_{1,abs}$       & 2.05E+03   & 2.31E+01 & 4.98E+01 \\
	  &   & $l_{2,abs}$       & 1.5E+01    & 5.79E-01 & 7.0E-01 \\
	  &   & $l_{\infty,abs}$  & 1.54E-01   & 1.54E-01 & 3.06E-01 \\  
	\hline
	\hline	
\end{tabular}
\end{adjustbox}	
	\label{tab:Datass_norms_weights_0_1}	
\end{table}

\begin{table}
\centering
\begin{threeparttable}
\caption{Effect of mesh refinement on cost function, 20480 observation points, 50 iterations}
\label{tab:part_3_1_summary}
\begin{tabular}{c c c c c c}
     \hline
     $\bm{v}$ & $\bm{q_0}$ & \textbf{Grid} & \textbf{Cost} & \textbf{Art. source adjoint} & \textbf{Art. source adjoint} \\
              &            &               &               & \textbf{without limiter}     & \textbf{with limiter } \\
     \hline 
      3       & 4          & $R2B4$        & Initial       & 1.38305576E+05      & 1.382259723E+05     \\
              &            &               & Final         & 5.43400710          & 1.568333807         \\
              &            & $R2B5$        & Initial       & 1.38485396E+05      & 1.384239895E+05     \\
              &            &               & Final         & 7.25133893          & 7.422831065         \\
              &            & $R2B6$        & Initial       & 1.38514151E+05      & 1.384673924E+05     \\
              &            &               & Final         & 6.07498723E+04      & 6.838604955E+01\tnote{1} \\  
              &            & $R2B7$        & Initial       & 1.38572302E+05      & 1.384887423E+05     \\
              &            &               & Final         & 1.38570810E+05\tnote{1} & 2.653808533E+02     \\        
     \hline       
      3       & 5          & $R2B4$        & Initial       & 9.28758721E+05      & 4.88312986E+05      \\
              &            &               & Final         & 7.42488109E+04      & 1.24677839E+02      \\
              &            & $R2B5$        & Initial       & 7.74514390E+05      & 4.98244461E+05      \\
              &            &               & Final         & 2.89551111E+04      & 2.49163938E+02      \\
              &            & $R2B6$        & Initial       & 7.22016145E+05      & 5.02445988E+05      \\
              &            &               & Final         & 4.64099399E+05      & 6.98240016E+02      \\   
              &            & $R2B7$        & Initial       & 1.44938992E+06      & 5061026074E+05      \\
              &            &               & Final         & 1.12492223E+06      & 1.38281501E+03      \\         
     \hline  
      4       & 3          & $R2B4$        & Initial       & 9.79289433E+06     & 9.79289444E+06       \\
              &            &               & Final         & 2.26422983E+02     & 2.29100238E+02       \\
              &            & $R2B5$        & Initial       & 9.79276206E+06     & 9.79276221E+06       \\
              &            &               & Final         & 4.79062588E+02     & 2.42988014E+02       \\
              &            & $R2B6$        & Initial       & 9.79280057E+06     & 9.79280054E+06       \\
              &            &               & Final         & 9.79280057E+06     & 3.04970940E+05       \\ 
              &            & $R2B7$        & Initial       &   ---              &  ---                 \\
              &            &               & Final         &   ---              &  ---                 \\         
     \hline   
\end{tabular}
\label{tab:Datass_mesh_ref}
\begin{tablenotes}
\item[1] Iteration 51, as iteration 50 ws the first iteration after restart, where $\alpha=1$ 
\end{tablenotes}
\end{threeparttable}
\end{table}

   \clearpage
\subsection{Coefficients of adjoint scheme}
\label{A:Coef4Adjoint}

\begin{equation*}
\left.\begin{aligned}
&\alpha_{j0}=\sum_{e=1}^3\gamma_e\beta_{j0}^e, \ \ \alpha_{j1}=\sum_{e=1}^3\gamma_e\beta_{j1}^e, \ \
\alpha_{j2}=\sum_{e=1}^3\gamma_e\beta_{j2}^e, \ \ \alpha_{j3}=\sum_{e=1}^3\gamma_e\beta_{j3}^e, \\
&\alpha_{j4}=\gamma_1\beta_{j4}^1+\frac{1}{2}(1+s_2))\gamma_2\beta_{j4}^2
                                            +\frac{1}{2}(1+s_3))\gamma_3\beta_{j4}^3, \\      
&\alpha_{j5}=\gamma_1\beta_{j5}^1+\frac{1}{2}(1+s_2))\gamma_2\beta_{j5}^2
                                            +\frac{1}{2}(1+s_3))\gamma_3\beta_{j5}^3, \\ 
&\alpha_{j6}=\gamma_2\beta_{j6}^2+\frac{1}{2}(1+s_1))\gamma_1\beta_{j6}^1
                                            +\frac{1}{2}(1+s_3))\gamma_3\beta_{j6}^3, \\      
&\alpha_{j7}=\gamma_2\beta_{j7}^2+\frac{1}{2}(1+s_1))\gamma_1\beta_{j7}^1
                                            +\frac{1}{2}(1+s_3))\gamma_3\beta_{j7}^3, \\  
&\alpha_{j8}=\gamma_3\beta_{j8}^3+\frac{1}{2}(1+s_1))\gamma_1\beta_{j8}^1
                                            +\frac{1}{2}(1+s_2))\gamma_2\beta_{j8}^3, \\      
&\alpha_{j9}=\gamma_3\beta_{j9}^3+\frac{1}{2}(1+s_1))\gamma_1\beta_{j9}^1
                                            +\frac{1}{2}(1+s_2))\gamma_2\beta_{j9}^3, \\    
&\alpha_{j10}=\frac{1}{2}(1-s_1))\gamma_1\beta_{j10}^1, \ \ \alpha_{j11}=\frac{1}{2}(1-s_1))\gamma_1\beta_{j11}^1, \\
&\alpha_{j12}=\frac{1}{2}(1-s_1))\gamma_1\beta_{j12}^1 +
             \frac{1}{2}(1-s_2))\gamma_2\beta_{j12}^2 \\
&\alpha_{j13}=\frac{1}{2}(1-s_2))\gamma_2\beta_{j13}^2, \ \ \alpha_{j14}=\frac{1}{2}(1-s_2))\gamma_2\beta_{j14}^2 \\  
&\alpha_{j15}=\frac{1}{2}(1-s_2))\gamma_2\beta_{j15}^2 +
             \frac{1}{2}(1-s_3))\gamma_3\beta_{j15}^3 \\
&\alpha_{j16}=\frac{1}{2}(1-s_3))\gamma_3\beta_{j16}^3, \ \ \alpha_{j17}=\frac{1}{2}(1-s_3))\gamma_3\beta_{j17}^3, \\  
&\alpha_{j18}=\frac{1}{2}(1-s_1))\gamma_1\beta_{j18}^1 +
             \frac{1}{2}(1-s_3))\gamma_3\beta_{j18}^3 \\
\end{aligned}\right.
\end{equation*}
where $\gamma_e=s_ed_el_e\overline{v}_e$. $s_{e}$ indicates the orientation of the edge on the grid and is defined 
by direction of the normal at the edge, $d_{e}$ is the layer thickness at the edge, 
$d_{e}=\frac{\widetilde{l_{e,2}}}{\widetilde{l_{e}}}d_{c,1}+(1-\frac{\widetilde{l_{e,2}}}{\widetilde{l_{e}}})d_{c,2}$,  
where $d_{c,i}, \ \ i=1,2$ is the layer thickness at cell centers on both sides of the edge, $\widetilde{l_{e}}$ is the distance between those cell centers and 
$\widetilde{l_{e,2}}$ is the distance between edge midpoint and cell center.  Layer thickness at cell centers are computed in pressure coodrinates, 
see ~ \cite{wan2009developing}. $|\overline{\Omega_e}|$ is the area of departure region at $e^{th}$ edge, $e=1,2,3$. \\

\begin{equation*}
K_{j1}=\begin{cases}
        \{0,1,2,3,4,5,6,7,8,9 \},   \ \ \ \ \ \ \ \  \text{if } \overline{v}_1>0,\\
        \{1,5,0,4,11,12,2,3,18,10\}, \ \ \text{ otherwise } 
       \end{cases}
\end{equation*}
\begin{equation*}
K_{j2}=\begin{cases}
        \{0,1,2,3,4,5,6,7,8,9 \}, \ \ \ \ \ \ \ \  \text{if } \overline{v}_2>0,\\
        \{2,6,7,0,12,13,14,15,3,1\}, \ \ \text{otherwise } 
       \end{cases}
\end{equation*}
\begin{equation*}
K_{j3}=\begin{cases}
        \{0,1,2,3,4,5,6,7,8,9 \}, \ \ \ \ \ \ \ \  \text{if } \overline{v}_3>0,\\
        \{3,0,8,9,1,2,15,16,17,18\}, \ \ \text{otherwise } 
       \end{cases}
\end{equation*}
$\vec{p}_{e}$ is tracer independent gauss quadrature vector:

\begin{align*}
  &p_{e,1}=\sum_{k=1}^4\omega_{e,k}|J_{e,k}|,\ \ p_{e,2}=\sum_{k=1}^4\omega_{e,k}|J_{e,k}|\lambda_{e,k}, 
  \ \ p_{e,3}=\sum_{k=1}^4\omega_{e,k}|J_{e,k}|\theta_{e,k},\\
  &p_{e,4}=\sum_{k=1}^4\omega_{e,k}|J_{e,k}|\lambda_{e,k}^2, \ \ p_{e,5}=\sum_{k=1}^4\omega_{e,k}|J_{e,k}|\theta_{e,k}^2,
  \ \ p_{e,6}=\sum_{k=1}^4\omega_{e,k}|J_{e,k}|\lambda_{e,k}\theta_{e,k},\\
  &p_{e,7}=\sum_{k=1}^4\omega_{e,k}|J_{e,k}|\lambda_{e,k}^3, p_{e,8}=\sum_{k=1}^4\omega_{e,k}|J_{e,k}|\theta_{e,k}^3, 
  \ \ p_{e,9}=\sum_{k=1}^4\omega_{e,k}|J_{e,k}|\lambda_{e,k}^2\theta_{e,k},\\
  &p_{e,10}=\sum_{k=1}^4\omega_{e,k}|J_{e,k}|\lambda_{e,k}\theta_{e,k}^2. \\
 \end{align*}

$(\lambda_{e,i},\theta_{e,i})$ is the gauss quadrature points, $w_{e,i}$ is the weight and $J_{e,i}$ is the Jacobian of transformation,  $i=\overline{1,4}$. \\
$\vec{c_e}$ is tracer dependent vector of 10 unknown coefficients: 

\begin{align*}
&c_{e,1}=\tilde{q_{j,e}}, \ \ c_{e,2}=\frac{\partial\tilde{q_{j,e}}}{\partial\lambda}, \ \ c_{e,3}=\frac{\partial\tilde{q_{j,e}}}{\partial\theta},
\ \ c_{e,4}=\frac{1}{2}\frac{\partial^2\tilde{q_{j,e}}}{\partial\lambda^2}, \ \ c_{e,5}=\frac{1}{2}\frac{\partial^2\tilde{q_{j,e}}}{\partial\theta^2}, \\
&. . . \\ 
 \end{align*}

Those unknown coefficients are the solution of least-square problem ~\cite{ollivier2002high}. 

\begin{figure}[h]
\centering
\includegraphics[width=0.5\textwidth]{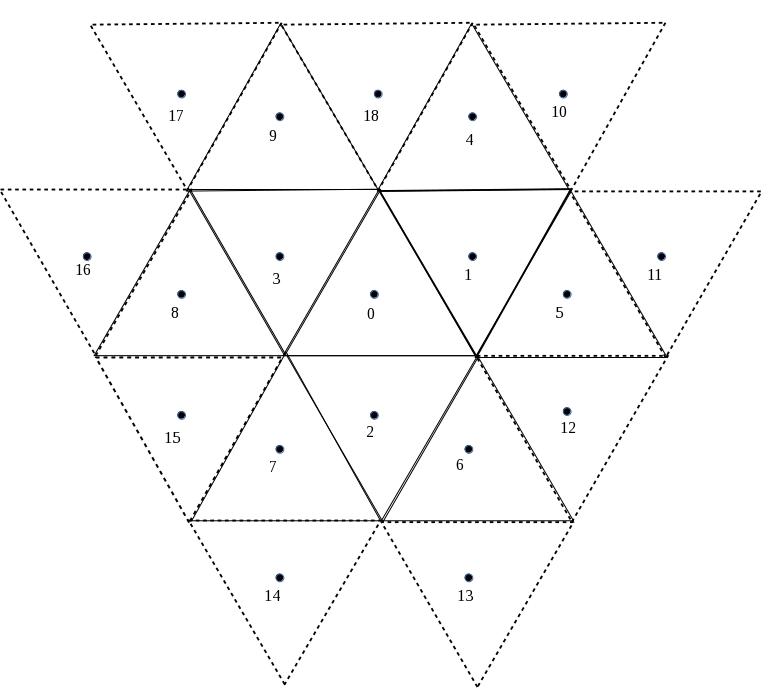}
\caption{Stencil on horizontal grid of ICON FFSL and its descendant adjoint scheme. Only bold triangles for ICON FFSL scheme stencil, bold and dashed triangles for adjoint scheme. }
\label{fig:stencil}
\end{figure}

\bibliography{adjoint4icon_arXiv}

\end{document}